\numberwithin{equation}{section}
\theoremstyle{definition}
\newtheorem*{rep@theorem}{\rep@title}
\newcommand{\newreptheorem}[2]{%
\newenvironment{rep#1}[1]{%
 \def\rep@title{#2 \ref{##1}}%
 \begin{rep@theorem}}%
 {\end{rep@theorem}}}
\newtheorem{theorem}{Theorem}[section]
\newtheorem{corollary}[theorem]{Corollary}
\newtheorem{lemma}[theorem]{Lemma}
\newtheorem{proposition}[theorem]{Proposition}
\newtheorem*{theorem*}{Theorem}
\newtheorem*{proposition*}{Proposition}
\newtheorem{remark}[theorem]{Remark}
\newtheorem{question}[theorem]{Question}
\newtheorem*{claim*}{Claim}
\newtheorem*{conjecture*}{Conjecture}
\newtheorem*{observation*}{Observation}
\newtheorem*{question*}{Question}
\begin{document}

\title{Topological conjugation classes of tightly transitive subgroups of $\text{Homeo}_{+}(\mathbb{S}^1)$}

\date{\today}

\author{Hui Xu \& Enhui Shi}
\footnote{This work is supported by NSFC (No. 11771318, No. 11790274).}
\address[E. Shi]{School of Mathematical Sciences, Soochow University, Suzhou 215006, P. R. China}
\email{ehshi@suda.edu.cn}

\address[H. Xu]
{\noindent School of mathematical and sciences, Soochow University, Suzhou, 215006, P.R. China}
\email{20184007001@stu.suda.edu.cn}

\maketitle

\begin{abstract}
Let $\text{Homeo}_{+}(\mathbb{S}^1)$ denote the group of orientation preserving homeomorphisms of the circle $\mathbb{S}^1$. A subgroup $G$ of  $\text{Homeo}_{+}(\mathbb{S}^1)$  is  tightly transitive if it is topologically transitive and no subgroup $H$ of $G$ with $[G: H]=\infty$ has this property;
is almost minimal if it has at most countably many nontransitive points. In the paper, we determine all the topological conjugation classes of tightly transitive and almost minimal subgroups of $\text{Homeo}_{+}(\mathbb{S}^1)$ which are isomorphic to $\mathbb{Z}^n$ for any integer $n\geq 2$.
\end{abstract}
\section{Introduction and Preliminaries}

\subsection{Background}

Given a group $G$ and a topological space $X$, one basic question is to classify all the continuous actions of $G$ on $X$ up to topological conjugations.
Generally, in order to get satisfactory results, one should make some assumptions on the topology of $X$, the algebraic structure of $G$, and the dynamics of the action. Poincar\'e's classification theorem for minimal orientation preserving homeomorphisms on the circle $\mathbb S^1$ is the first celebrated result toward the answer to this question; the rotation numbers are complete invariants for such systems (see \cite{Po}). In \cite{Gh87}, Ghys classified all orientation preserving minimal group actions on the circle using bounded Euler class; this extended the previous theorem due to Poincar\'e (see also \cite{Gh01}).

 Minimality and topological transitivity can be viewed as two kinds of irreducibility for nonlinear group actions. Inspired by the previous works of Poincar\'e and Ghys, it is natural to study the classification of topologically transitive group actions on the circle. However, the phenomena of topological transitivity are much richer than that of minimality; we have to make stronger assumptions on the algebraic structure of acting groups
and on the dynamics of the action to get interesting results.  We should note that if we consider an orientation preserving minimal action on the circle by an amenable group, the action must factor through a commutative group action by rotations; this is an easy conclusion of the existence of invariant probability measures on the circle. Contrary to the case of minimal actions, many solvable groups  possess faithful topological transitive actions on the real line $\mathbb R$ (see \cite {SZ1}), which corresponds to the actions on $\mathbb S^1$ with a global fixed point.

\subsection{Notions and notations}

Denote by $\mathbb{S}^1$ the unit circle in the complex plane $\mathbb C$. In this paper, we want to study the classification of topologically transitive orientation preserving faithful group actions on $\mathbb S^1$. This is essentially the same as determining the conjugation classes of topologically transitive subgroups of $\text{Homeo}_{+}(\mathbb{S}^1)$. Before the statement of the main results, let us recall some notions.

Let $X$ be a topological space and $\text{Homeo}(X)$ be the homeomorphism group of $X$. Then for a subgroup $G$ of $\text{Homeo}(X)$, the pair $(X,G)$ is called a \textbf{\it{dynamical system}}. The \textbf{\it{orbit}} of $x\in X$ under $G$ is $Gx=\{gx: g\in G\}$. For a subset $A\subseteq X$, define $GA=\bigcup _{x\in A}Gx$. A subset $A\subseteq X$ is called $G$-\textbf{\it{invariant}} if $GA=A$. If $A$ is $G$-invariant, denote $G\mid_A$ the restriction of the action of $G$ to $A$. We call $x\in X$  a $n$-\textbf{\it{periodic  point}} of $G$ if the orbit $Gx$ consists of $n$ elements. For a homeomorphism $f\in G$, a point $x\in X$ is called a periodic point of $f$ if $x$ is a period point of the cyclic group $\langle f\rangle$ generated by $f$. Particularly if $Gx=\{x\}$, then we call $x$ a \textbf{\it{fixed point}} of $G$. Denote by $\text{P}(G)$ and $\text{Fix}(G)$ the sets of periodic points and fixed points of $G$ respectively; denote by $\text{P}(f)$ and $\text{Fix}(f)$ the sets of periodic points and fixed points of $f$ respectively.

For a dynamical system $(X,G)$, $G$ is said to be\textbf{\it{ topologically transitive}} if for any two nonempty open subsets $U$ and $V$ of $X$, there is some $g\in G$ such that $g(U)\cap V\neq \emptyset$. If there is some point $x\in X$ such that the orbit $Gx$ is dense in $X$ then $G$ is said to be\textbf{\it{ point transitive}} and such $x$ is called a \textbf{\it{transitive point}}. If $x$ is not a transitive point, then it is said to be a \textbf{\it{nontransitive point}}. It is well known that if $G$ is countable and $X$ is a Polish space without isolated points, the notions of topological transitivity and point transitivity are the same. $G$ is called {\it minimal} if every point of $X$ is a transitive point. A homeomorphism $f$ of $X$ is said to be {\it topologically transitive} (resp. {\it minimal}) if the cyclic group $\langle f\rangle$ is topologically transitive (resp. minimal).

Let $\mathbb{S}^1$ denote the circle. Denote by $\text{Homeo}_{+}(\mathbb{S}^1)$ the group of all orientation preserving homeomorphisms of $\mathbb{S}^1$. Two subgroups $G$ and $H$ of $\text{Homeo}_{+}(\mathbb{S}^1)$ are said to be \textbf{\it{topologically conjugate}} (or \textbf{\it{conjugate}} for short), if there is a  homeomorphism $\phi\in \text{Homeo}_{+}(\mathbb{S}^1)$ such that $\phi G\phi^{-1}=H$.  If $G$ is topologically transitive and no subgroup $F$ of $G$ with $[G:F]=\infty$ is topologically transitive, then  $G$ is said to be \textbf{\it{tightly transitive}}; $G$ is said to\textbf{\it{ almost minimal}} if there are at most countably many nontransitive points of $G$.

\subsection{Description of the main result}

In \cite{SZ2}, it determined all topological conjugation classes of tightly transitive almost minimal subgroups of $\text{Homeo}_{+}(\mathbb{R})$  which are isomorphic to $\mathbb{Z}^n$ for any integer $n\geq 2$. In this paper, we extend this result to group actions on the circle $\mathbb{S}^1$; that is, we determine all topological conjugation classes of  tightly transitive and almost minimal subgroups of $\text{Homeo}_{+}(\mathbb{S}^1)$ which are isomorphic to $\mathbb{Z}^n$ for any integer $n\geq 2$. Roughly speaking, all the conjugation classes are parameterized by a combination of orbits of irrational numbers under the action of $GL(2, \mathbb Z)$ by M\"obius transformations and orbits of $\mathbb Z^n$ under some specified affine actions (see Theorem 7.1). In fact, the Poincar\'e's classification theorem indicates that, for minimal subgroups of $\text{Homeo}_{+}(\mathbb{S}^1)$ which is isomorphic to $\mathbb Z$, all conjugation classes are parameterized by the orbits of irrationals under the $\mathbb Z$ action on $\mathbb R$ generated by the unit translation. Then we compare these two classification theorems in the following tublar. (``TT" and ``AM" denote the properties of tight transitivity and almost minimality respectively; $\mathcal{O}(...)$ denotes the orbits of ... ).

\vspace{5mm}

\begin{tabular}{|c|c|c|}\hline
  & Poincar\'e's classification & The present classification\\ \hline
Spaces & $\mathbb S^1$ & $\mathbb S^1$\\ \hline
Groups & $\mathbb Z$ & $\mathbb Z^n\ (n\geq 2)$\\ \hline
Dynamics & Minimality & TT \& AM\\ \hline
Invariants & $\mathcal{O}$(integer translations) &  $\mathcal{O}$(M\"obius \& affine actions) \\ \hline

\end{tabular}

\vspace{5mm}

Here we should remark that the subgroups of $\text{Homeo}_{+}(\mathbb{S}^1)$ constructed in the paper do not occur in  $\text{Diff}^{1+\varepsilon}(\mathbb{S}^1)$ for sufficiently large $\varepsilon \in (0, 1)$ (see e.g. \cite{DKA, Na11}),
and we do not plan to discuss the smooth realization of these groups in the present paper. Also, it may be worthwhile
to compare the actions of $\mathbb Z^n$ with that of lattices in higher rank simple Lie groups (higher rank lattices) on the circle. People believe
that there are no interesting actions for such lattices (see e.g. \cite{Bu, Na10, Gh99, Wi}). Certainly, the following question is left:

\begin{question}
For each finitely generated torsion free nilpotent group $\Gamma$, determine the topological conjugation classes of tightly transitive and almost
minimal subgroups $G$ of $\text{Homeo}_{+}(\mathbb{S}^1)$ which is isomorphic to $\Gamma$.
\end{question}
We recommend the readers to consult \cite{CJN, FF, JNR} for the discussions about nilpotent group actions on one-manifolds.

The paper is organized as follows. In section 2, we give some auxiliary results  which will be used in the following sections. In section 3, we recall and prove some results around group actions on $\mathbb{R}$, which is the starting point for further considerations. In section 4,  we construct a class of tightly transitive and almost minimal subgroups $G_{\alpha,n,k,g,f}$ of $\text{Homeo}_{+}(\mathbb{S}^1)$, which are isomorphic to $\mathbb Z^n$ and parameterized by five indices $\alpha,n,k,g,f$.  In section 5, we show that every tightly transitive and almost minimal subgroup of $\text{Homeo}_{+}(\mathbb{S}^1)$ which is isomorphic to $\mathbb{Z}^n$ is topologically conjugate to some $G_{\alpha,n,k,g,f}$. In section 6, we determine all the topological conjugation classes of these $G_{\alpha,n,k,g,f}$. In the last section, we restate the classification theorem in terms of matrix, with respect to a fixed standard basis of $G_{\alpha,n}$.

\section{Auxiliary results}
The following is the well-known Poinca\'{r}e's classification theorem (see e.g. \cite[Chap. 11]{K}).
\begin{theorem}\label{poincare's classification}
Let $f:\mathbb{S}^1\rightarrow\mathbb{S}^1$ be an orientation preserving homeomorphism.
\begin{itemize}
  \item [(1)] If $f$ has a periodic point, then all periodic orbits have the same period.
  \item [(2)] If $f$ has no periodic point, then there is a continuous surjection $\phi:\mathbb{S}^1\rightarrow \mathbb{S}^1$ and a minimal rotation $T: \mathbb{S}^1\rightarrow \mathbb{S}^1$ with $\phi f= T\phi$. Moreover, the map $\phi$ has the property that for each $z\in\mathbb{S}^1$, $\phi^{-1}(z)$ is either a point or a closed sub-interval of $\mathbb{S}^1$.
\end{itemize}
\end{theorem}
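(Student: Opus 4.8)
The plan is to route everything through the \emph{rotation number}, following the classical Poincaré argument. First I would fix the universal covering $\pi:\mathbb{R}\to\mathbb{S}^1$, $\pi(x)=e^{2\pi i x}$, and lift $f$ to an increasing homeomorphism $F:\mathbb{R}\to\mathbb{R}$ satisfying $F(x+1)=F(x)+1$ for all $x$; any two lifts differ by an integer translation, and orientation preservation of $f$ is exactly what forces $F$ to be increasing. Setting $a_n=F^n(0)$ and using that $F^m$ is increasing and commutes with the unit translation, one obtains the quasi-additivity estimate $|a_{m+n}-a_m-a_n|<1$; a Fekete-type argument then yields the existence of $\rho(F)=\lim_{n\to\infty}a_n/n$. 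A parallel bound $|(F^n(x)-x)-(F^n(y)-y)|<1$ for $|x-y|<1$ shows the limit is independent of the base point, so $\rho(F)$ is well defined and $\rho(f):=\rho(F)\bmod 1$ is independent of the chosen lift.

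For part (1), I would first establish the dichotomy that $f$ has a periodic point if and only if $\rho(f)\in\mathbb{Q}$. If $x_0$ is periodic of period $m$, lifting gives $F^m(\tilde x_0)=\tilde x_0+\ell$ for some integer $\ell$, whence $\rho(F)=\lim_{k}(F^{km}(\tilde x_0)-\tilde x_0)/(km)=\ell/m\in\mathbb{Q}$. Writing $\rho(f)=p/q$ in lowest terms, the equality $\ell/m=p/q$ forces $q\mid m$, so every period is a multiple of $q$. To upgrade $q\mid m$ to $m=q$ I would invoke the combinatorial rigidity of the orbit: for rotation number $p/q$ in lowest terms, $f$ permutes any one periodic orbit in exactly the same cyclic order as the rigid rotation $R_{p/q}$ permutes $q$ equally spaced points. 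Since every orbit of $R_{p/q}$ has cardinality $q$, this cyclic-order rigidity forces every periodic orbit of $f$ to have cardinality $q$, i.e. period exactly $q$. (Passing to $g=f^q$, whose lift $F^q$ has $\rho(F^q)=q\rho(F)=p\in\mathbb{Z}$, gives $\rho(g)=0$ and hence the fixed points of $g$, confirming such period-$q$ points exist.)

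For part (2), assume $\rho:=\rho(F)$ is irrational. I would fix a base point $x_0$ and consider the countable set $E=\{F^n(x_0)+m:n,m\in\mathbb{Z}\}$ together with the candidate assignment $F^n(x_0)+m\mapsto n\rho+m$. The decisive lemma is that, because $\rho$ is irrational, this assignment is \emph{strictly order preserving}: $F^n(x_0)+p<F^{n'}(x_0)+p'$ if and only if $n\rho+p<n'\rho+p'$. Granting this, the map extends monotonically to all of $\mathbb{R}$ (filling the gaps by taking suprema), producing a continuous nondecreasing surjection $H:\mathbb{R}\to\mathbb{R}$ with $H(x+1)=H(x)+1$ and $H\circ F=R_\rho\circ H$, where $R_\rho(t)=t+\rho$. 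Descending through $\pi$ yields the continuous surjection $\phi:\mathbb{S}^1\to\mathbb{S}^1$ with $\phi f=T\phi$, $T$ the rotation by $\rho$, which is minimal since $\rho$ is irrational. Finally, because $H$ is monotone and continuous, each fiber $H^{-1}(t)$ is either a single point or a closed interval, namely the closure of a bounded complementary interval of $\overline{E}$ (a wandering gap of the minimal Cantor set); this is inherited by $\phi^{-1}(z)$, giving the stated fiber structure.

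The main obstacle is the order-preserving lemma in the last paragraph: showing that for irrational $\rho$ the orbit $\{F^n(x_0)\}$ sits on the line, modulo $1$, in exactly the same cyclic order as the arithmetic progression $\{n\rho\}$. This is the substantive content of the theorem, since any order violation would produce a coincidence $F^n(x_0)+p=F^{n'}(x_0)+p'$ or a sign reversal that ultimately forces a rational relation making $\rho$ rational. Handling this carefully is precisely what makes $H$ well defined and monotone rather than a merely formal bijection between the two orbits, and it is where I expect the real work to lie.
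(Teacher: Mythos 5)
The paper does not actually prove this theorem---it is stated as the classical Poincar\'e classification with a pointer to \cite[Chap.~11]{K}---and your outline is precisely the standard rotation-number proof found there (lift to $F$, quasi-additivity giving $\rho(F)$, the rational/irrational dichotomy for part (1), and Poincar\'e's monotone semiconjugacy $H$ built from the order-isomorphism $F^n(x_0)+m\mapsto n\rho+m$ for part (2)), so it is sound and there is no divergence to report. The pieces you leave as black boxes are exactly the routine lemmas of that classical treatment: for the upgrade $m=q$ in part (1) note that a single $m$-cycle on cyclically ordered points is a combinatorial rotation by $j$ with $\gcd(j,m)=1$ and $\rho(f)=j/m$ (or, more simply, apply the monotonicity argument to $G=F^q-p$, whose iterates at a lifted periodic point are monotone unless $G$ fixes it), and in part (2) the continuity and surjectivity of $H$ rest on the density of $\{n\rho+m\}$ in $\mathbb{R}$ for irrational $\rho$, which you should say explicitly.
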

We collect the following useful properties for the subgroup of $\text{Homeo}_{+}(\mathbb{S}^1)$ with periodic orbits, which can be seen in \cite{Na11} Exercise 2.1.2.
\begin{proposition}\label{homeo with periodic orbit}
Let $H$ be a subgroup of  $\text{Homeo}_{+}(\mathbb{S}^1)$. If $H$ has a periodic orbit, then
\begin{itemize}
  \item [(1)] the set $P(H)$ of periodic points is a compact subset of $\mathbb{S}^1$;
  \item [(2)] all periodic orbits have the same cardinality.
\end{itemize}
\end{proposition}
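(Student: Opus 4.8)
The plan is to first extract the algebraic skeleton of the action on a finite orbit, and then reduce both assertions to statements about order-preserving homeomorphisms of intervals. Fix a periodic orbit $O=\{x_0,x_1,\dots,x_{n-1}\}$, listed in the cyclic order inherited from $\mathbb{S}^1$. The starting observation is that any orientation preserving homeomorphism permuting a finite cyclically ordered set must act on it as a cyclic rotation; hence the action of $H$ on $O$ is encoded by a homomorphism $c:H\to\mathbb{Z}/n\mathbb{Z}$ defined by $h(x_i)=x_{i+c(h)}$, and since $O$ is a single (transitive) $H$-orbit of $n$ points, $c$ is surjective. Its kernel $K$ is the pointwise stabilizer of $O$, has index $n$ in $H$, and each $k\in K$ preserves setwise every open arc $J_i$ between consecutive points $x_i,x_{i+1}$ of $O$, acting on the closed arc $\overline{J_i}$ as an orientation preserving, hence order preserving, self-homeomorphism. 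Two elementary facts will be used repeatedly: distinct $H$-orbits are disjoint, and since $[H:K]=n<\infty$, a point $y$ has finite $H$-orbit if and only if it has finite $K$-orbit (write $H$ as a union of $n$ cosets of $K$).

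For part (2), let $y$ be any periodic point; I may assume $Hy\cap O=\emptyset$, since otherwise $Hy=O$ and there is nothing to prove. Put $n_i=|Hy\cap J_i|$. Choosing $h\in H$ with $c(h)=1$, the map $h$ carries $J_i$ onto $J_{i+1}$ and permutes the invariant set $Hy$, so it restricts to a bijection $Hy\cap J_i\to Hy\cap J_{i+1}$; hence all $n_i$ are equal and $|Hy|=n\cdot n_0$, giving $n\mid|Hy|$. Exchanging the roles of $O$ and $Hy$ (the analogous homomorphism attached to $Hy$ is again surjective) yields $|Hy|\mid n$, whence $|Hy|=n$. Thus every periodic orbit has cardinality $n$.

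For part (1), by the finite-index remark $P(H)$ consists exactly of the $K$-periodic points. I claim that inside each closed arc $\overline{J_i}$ these coincide with the fixed points of $K$: indeed $\overline{J_i}$ is order-isomorphic to a closed interval, $K$ acts there by increasing homeomorphisms, and an increasing self-homeomorphism of an interval that permutes a finite set must fix each of its points (otherwise iterating would push some point strictly away from itself). Hence a finite $K$-orbit in $\overline{J_i}$ is a single point fixed by all of $K$, so $P(H)\cap\overline{J_i}=\text{Fix}(K)\cap\overline{J_i}$. Since $\text{Fix}(K)=\bigcap_{k\in K}\text{Fix}(k)$ is an intersection of closed sets, each $P(H)\cap\overline{J_i}$ is closed, and therefore so is the finite union $P(H)=\bigcup_{i=0}^{n-1}\bigl(P(H)\cap\overline{J_i}\bigr)$; being closed in the compact space $\mathbb{S}^1$, it is compact.

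The only genuinely delicate point is the passage from the circle to the arcs $\overline{J_i}$: everything hinges on replacing $H$ by the finite-index pointwise stabilizer $K$, which converts the cyclic (rotational) dynamics on $O$ into genuinely linearly ordered dynamics on each arc, where ``periodic'' collapses to ``fixed''. The bookkeeping with the homomorphism $c$ and the cosets of $K$ should be routine; the step worth stating carefully is why finite $K$-orbits inside an arc must be fixed points, which is precisely the one-dimensional order-preserving phenomenon underlying Theorem~\ref{poincare's classification}.
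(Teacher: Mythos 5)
Your proof is correct, and in fact the paper offers no proof of this proposition at all: it is stated with a pointer to Exercise 2.1.2 of Navas's book \cite{Na11}, so there is no in-paper argument to compare against, and your write-up supplies exactly the standard solution to that exercise. Each step checks out: the action of $H$ on a finite invariant set is through cyclic rotations because an orientation preserving circle homeomorphism preserves cyclic order, so the homomorphism $c:H\to\mathbb{Z}/n\mathbb{Z}$ is well defined and surjective on a single orbit; the double-divisibility argument in (2) ($n\mid |Hy|$ by transporting points arc-to-arc with some $h$ satisfying $c(h)=1$, and $|Hy|\mid n$ by symmetry) is clean and complete, including the reduction to the case $Hy\cap O=\emptyset$; and the key one-dimensional fact in (1) --- an increasing homeomorphism of an interval permuting a finite set fixes it pointwise --- is correctly isolated and correctly applied to the finite-index pointwise stabilizer $K$, using that finiteness of the $H$-orbit and of the $K$-orbit are equivalent when $[H:K]=n<\infty$. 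One small streamlining you could note: since the closed arcs $\overline{J_i}$ cover $\mathbb{S}^1$, your local identifications assemble to the global equality $P(H)=\text{Fix}(K)$, so closedness (hence compactness) follows in one stroke from $\text{Fix}(K)=\bigcap_{k\in K}\text{Fix}(k)$ being an intersection of closed sets, without invoking the finite union. The edge case $n=1$ (a global fixed point, where $\overline{J_0}$ is the circle cut at $x_0$) is also covered by your argument, since cutting at the fixed point still yields a linearly ordered interval.
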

Let $f$ be a homeomorphism on a topological space $X$. Recall that a point $x$ in $X$ is called a {\it wandering point} of $f$, if there
exists an open neighborhood $U$ of $x$ such that the sets $f^n(U)$, $n\in\mathbb Z$, are pairwise disjoint. We use $W(f, X)$ to denote
the set of all wandering points. Then $W(f, X)$ is an $f$ invariant open set. The following lemma is direct.

\begin{lemma}\label{wandering lemma}
Let $f$ and $g$ be homeomorphisms on a topological space $X$ such that $fg=gf$. Then (1) $g(W(f, X))=W(f, X)$; (2) if $x\in X$ is an $n$
periodic point of $f$, then $g(x)$ is also an $n$ periodic point of $f$.
\end{lemma}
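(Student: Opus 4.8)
The plan is to exploit the commutation relation $fg = gf$, which immediately yields $f^n g = g f^n$ for every $n \in \mathbb{Z}$, and then to transport both the wandering structure and the periodic-orbit structure across the homeomorphism $g$.

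For part (1) I would first establish the inclusion $g(W(f,X)) \subseteq W(f,X)$. Take $x \in W(f,X)$ and let $U$ be an open neighborhood of $x$ witnessing wandering, so that the sets $\{f^n(U)\}_{n \in \mathbb{Z}}$ are pairwise disjoint. Since $g$ is a homeomorphism, $g(U)$ is an open neighborhood of $g(x)$, and using $f^n g = g f^n$ one computes $f^n(g(U)) = g(f^n(U))$. As $g$ is injective, the family $\{g(f^n(U))\}_{n \in \mathbb{Z}}$ is pairwise disjoint precisely because $\{f^n(U)\}_{n \in \mathbb{Z}}$ is; hence $g(U)$ witnesses that $g(x)$ is wandering, giving $g(x) \in W(f,X)$. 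Because $g^{-1}$ also commutes with $f$, the same argument applied to $g^{-1}$ gives $g^{-1}(W(f,X)) \subseteq W(f,X)$, that is $W(f,X) \subseteq g(W(f,X))$, and the two inclusions together yield the claimed equality.

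For part (2) the key observation is that $g$ carries the $f$-orbit of $x$ bijectively onto the $f$-orbit of $g(x)$. Indeed, from $f^k g = g f^k$ one obtains $\langle f\rangle g(x) = \{f^k(g(x)) : k \in \mathbb{Z}\} = \{g(f^k(x)) : k \in \mathbb{Z}\} = g(\langle f\rangle x)$, and since $g$ is injective this orbit has exactly the same cardinality as $\langle f\rangle x$. Thus if $x$ is an $n$-periodic point of $f$, meaning $\langle f\rangle x$ has $n$ elements, then $\langle f\rangle g(x)$ also has $n$ elements, so $g(x)$ is $n$-periodic for $f$ as well.

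I do not anticipate a genuine obstacle here: both parts reduce to the single formal fact that commuting with $f$ lets $g$ intertwine the iterates $f^n$, and the only care required is to invoke the injectivity (respectively bijectivity) of $g$ to preserve disjointness and cardinality. The statement is therefore direct once the intertwining identity $f^n \circ g = g \circ f^n$ is in hand, which is exactly why the paper records it as immediate.
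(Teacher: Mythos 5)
Your proof is correct and is exactly the routine argument the paper has in mind: the paper states this lemma without proof (``The following lemma is direct''), and your intertwining identity $f^n\circ g = g\circ f^n$ together with the injectivity of $g$ is the standard way to fill in the omitted details, including the clean trick of applying the inclusion to $g^{-1}$ to get equality in part (1).
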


\begin{lemma}\label{commuting lemma}
Let $H$ be a subgroup of $\text{Homeo}_{+}(\mathbb{S}^1)$ and $f\in \text{Homeo}_{+}(\mathbb{S}^1)$. If $f$ commutes with each element of $H$, and $P(H)\neq \emptyset$ , $P(f)\neq \emptyset$, then $P(H)\cap P(f)\neq \emptyset$ and the group $\langle f, H\rangle$ has a periodic orbit.
\end{lemma}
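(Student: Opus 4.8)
The plan is to manufacture a single point that is periodic for both $H$ and $f$, and then to check that its orbit under the whole group $\langle f, H\rangle$ is finite. Everything will rest on two compact invariant sets. First I would record that, since $P(H)\neq\emptyset$, Proposition \ref{homeo with periodic orbit} makes $P(H)$ a nonempty compact set in which every $H$-orbit is finite. Because $f$ commutes with each element of $H$, for any $x\in P(H)$ we have $Hf(x)=f(Hx)$, so $f(x)$ again has a finite $H$-orbit; hence $f(P(H))=P(H)$, i.e. $P(H)$ is a nonempty compact $f$-invariant set. On the other side, since $P(f)\neq\emptyset$, Theorem \ref{poincare's classification}(1) gives a single common period $q$ for all periodic points of $f$, so that $P(f)=\mathrm{Fix}(f^{q})$ is a nonempty closed set, and $P(H)$ is in particular $f^{q}$-invariant.

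The heart of the argument is to show $P(H)\cap\mathrm{Fix}(f^{q})\neq\emptyset$, which immediately gives $P(H)\cap P(f)\neq\emptyset$. I would argue by contradiction: if $P(H)$ were disjoint from $\mathrm{Fix}(f^{q})$, then $P(H)$ would sit inside the complement $\mathbb{S}^1\setminus\mathrm{Fix}(f^{q})$, which is a union of open ``gap'' arcs whose endpoints lie in $\mathrm{Fix}(f^{q})$. On each such gap the orientation-preserving map $f^{q}$ has no fixed point, hence moves every point monotonically toward one endpoint, so the forward $f^{q}$-orbit of any point of $P(H)$ converges to a gap endpoint. Since $P(H)$ is closed and $f^{q}$-invariant, that limiting endpoint lies in $P(H)\cap\mathrm{Fix}(f^{q})$, contradicting disjointness. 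I expect this monotonicity-in-a-gap step to be the main point needing care, since it is exactly where the one-dimensional dynamics of the circle (rather than mere compactness) enter; the rest is bookkeeping.

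Finally, fix a point $z\in P(H)\cap P(f)$. Using that $f$ commutes with $H$, every element of $\langle f, H\rangle$ can be written as $f^{k}h$ with $k\in\mathbb{Z}$ and $h\in H$, so $\langle f, H\rangle z=\bigcup_{w\in O}Hw$, where $O=\{z,fz,\dots,f^{q-1}z\}$ is the finite $f$-orbit of $z$. Each such $w$ lies in $P(H)$ by the $f$-invariance of $P(H)$ established above, hence has a finite $H$-orbit by definition of $P(H)$; a finite union of finite sets is finite, so $z$ has a finite $\langle f, H\rangle$-orbit. Thus $\langle f, H\rangle$ has a periodic orbit, which completes the proof.
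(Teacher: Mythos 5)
Your proof is correct and takes essentially the same route as the paper's: both extract a common period $q$ with $P(f)=\mathrm{Fix}(f^{q})$ from Theorem \ref{poincare's classification}, push a point of $P(H)$ monotonically along a complementary gap of $\mathrm{Fix}(f^{q})$ to an endpoint, and use the compactness of $P(H)$ (Proposition \ref{homeo with periodic orbit}) together with its $f$-invariance (your $Hf(x)=f(Hx)$ computation, the paper's Lemma \ref{wandering lemma}(2)) to land in $P(H)\cap P(f)$. Your concluding finite-orbit computation $\langle f,H\rangle z=\bigcup_{0\le j<q}Hf^{j}z$ matches the paper's, and your contradiction framing versus the paper's direct choice of $x\in P(H)\setminus P(f)$ is purely cosmetic.
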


\begin{proof}
If $P(H)\subseteq P(f)$, then the conclusion holds. So we may assume that there is some $x\in P(H)\setminus P(f)$. Take a maximal interval
$(a,b)\subseteq \mathbb{S}^1\setminus P(f)$ with $x\in (a,b)$. Since $P(f)=Fix(f^p)$ for some positive integer $p$, by Theorem \ref{poincare's classification}, we have $a,b\in Fix(f^p)$. So, either $\lim_{n\rightarrow \infty} f^{np}(x)=a$ or $\lim_{n\rightarrow \infty} f^{-np}(x)=a$. We may assume that $\lim_{n\rightarrow \infty} f^{np}(x)=a$. By Lemma \ref{wandering lemma} (2), we have $f^{np}(x)\in P(H)$ for all $n\in\mathbb{Z}$. By Proposition \ref{homeo with periodic orbit}, $P(H)$ is compact. Thus $a\in P(H)$. Hence $P(H)\cap P(f)\neq \emptyset$.\\
\indent Let $y\in P(H)\cap P(f)$. Then $Hy=\{h_0(y), h_1(y),\cdots, h_{n-1}(y)\}$, for some $h_0,\cdots,h_{n-1}\in H$, and $f^k(y)=y$, for some positive integer $k$. Since $f$ commutes with $H$, we have
\[
\langle f, H\rangle y=\{f^mh(y): m\in\mathbb{Z}, h\in H \}=\bigcup_{i=0}^{n-1}\bigcup_{j=0}^{k-1}\{f^jh_i (y)\},
\]
which is finite. Hence the group $\langle f, H\rangle$ has a periodic orbit.
\end{proof}

\begin{proposition}\label{existence of finte orbit}
Let $G$ be a subgroup of $\text{Homeo}_{+}(\mathbb{S}^1)$ which is isomorphic to $\mathbb{Z}^n$ with $n\geq 2$, tightly transitive and almost minimal. Then there is a finite $G$-orbit $\{x_1,\cdots,x_k\}$ with $k\geq 1$.
\end{proposition}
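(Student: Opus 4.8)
The plan is to prove the statement in two stages: first show that \emph{every} nontrivial element of $G$ has a periodic point, and then assemble a common finite orbit by a short induction built on Lemma~\ref{commuting lemma}. Write $G=\langle a_1,\dots,a_n\rangle$ with $G\cong\mathbb Z^n$, $n\geq 2$.

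The first and main step is the claim that $\mathrm P(g)\neq\emptyset$ for every $g\in G\setminus\{\mathrm{id}\}$. Suppose some such $g$ has no periodic point. By Theorem~\ref{poincare's classification}(2), $g$ is semiconjugate, via a monotone degree-one map $\phi$, to a minimal irrational rotation; the nondegenerate fibers $\phi^{-1}(z)$ are the complementary intervals of a closed $g$-invariant set $M_g$, which is the unique minimal set of $g$, equal to $\mathbb S^1$ exactly when $\phi$ is a conjugacy and a Cantor set otherwise. I would split on these two cases. If $M_g=\mathbb S^1$, then $g$ is minimal, so $\langle g\rangle$ is topologically transitive; since $n\geq 2$, the subgroup $\langle g\rangle\cong\mathbb Z$ has infinite index in $G\cong\mathbb Z^n$, contradicting tight transitivity. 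If $M_g$ is a Cantor set, I would exploit that $G$ is abelian: for each $h\in G$ the set $hM_g$ is again a minimal set of $g$ (because $g(hM_g)=h(gM_g)=hM_g$), so uniqueness forces $hM_g=M_g$ and hence $M_g$ is $G$-invariant. As $M_g$ is uncountable and nowhere dense, every $x\in M_g$ has $Gx\subseteq M_g$ with $\overline{Gx}\subseteq M_g\neq\mathbb S^1$, so $x$ is nontransitive; this produces uncountably many nontransitive points, contradicting almost minimality. Either case is impossible, which proves the claim.

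With the claim established, the second step is an induction using Lemma~\ref{commuting lemma} (and Proposition~\ref{homeo with periodic orbit} for compactness of periodic-point sets). Set $H_1=\langle a_1\rangle$; then $\mathrm P(H_1)=\mathrm P(a_1)\neq\emptyset$ by the claim, so $H_1$ has a periodic orbit. Assuming $H_j=\langle a_1,\dots,a_j\rangle$ has a periodic orbit, so that $\mathrm P(H_j)\neq\emptyset$, I apply Lemma~\ref{commuting lemma} with $H=H_j$ and $f=a_{j+1}$: since $G$ is abelian, $a_{j+1}$ commutes with $H_j$, and $\mathrm P(a_{j+1})\neq\emptyset$ by the claim, so $H_{j+1}=\langle a_1,\dots,a_{j+1}\rangle$ has a periodic orbit. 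After $n$ steps $H_n=G$ has a periodic orbit, which is exactly a finite $G$-orbit $\{x_1,\dots,x_k\}$ with $k\geq 1$.

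I expect the first step to be the principal obstacle, and within it the Denjoy (Cantor) case: the delicate point is invoking the uniqueness of the minimal set of a fixed-point-free circle homeomorphism to upgrade $g$-invariance of $M_g$ to $G$-invariance, since this is precisely where almost minimality is consumed, while the case $M_g=\mathbb S^1$ is where tight transitivity together with the hypothesis $n\geq 2$ enters. The induction in the second step is routine once Lemma~\ref{commuting lemma} is available.
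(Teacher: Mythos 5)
Your proposal is correct and takes essentially the same approach as the paper: reduce via Lemma~\ref{commuting lemma} (applied inductively over generators, as you spell out) to showing every element of $G$ has a periodic point, then use the Poincar\'e dichotomy, with the minimal case excluded by tight transitivity (since $\langle g\rangle$ has infinite index for $n\geq 2$) and the Denjoy case by almost minimality. The only cosmetic difference is that you derive $G$-invariance of the exceptional Cantor set from uniqueness of the minimal set of a fixed-point-free circle homeomorphism, whereas the paper gets it from Lemma~\ref{wandering lemma}(1) applied to $\mathbb S^1\setminus W(f,\mathbb S^1)$ --- these are the same set, and both invariance arguments rest on the same commutativity.
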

\begin{proof}
By Lemma \ref{commuting lemma}, it suffices to show that $P(g)\neq \emptyset$ for any $g\in G$. Otherwise, let $f\in G$ be such that $P(f)=\emptyset$. Then there is a continuous surjection $\phi:\mathbb{S}^1\rightarrow \mathbb{S}^1$ and a minimal rotation $T: \mathbb{S}^1\rightarrow \mathbb{S}^1$ with $\phi f= T\phi$ by Theorem \ref{poincare's classification}. If $\phi$ is a homeomorphism, then $f$ is minimal which contradicts to the tight transitivity of $G$. Thus $W(f, \mathbb S^1)\not=\emptyset$ and $\mathbb S^1\setminus W(f, \mathbb S^1)$ is homeomorphic to the Cantor set. Since $\mathbb S^1\setminus W(f, \mathbb S^1)$ is $G$-invariant by Lemma \ref{wandering lemma} (1), each point of which is nontransitive.  This contradicts the almost minimality of $G$.
\end{proof}

The following lemma is well known. We afford a proof here for convenience of the readers.

\begin{lemma}\label{rotation lemma}
Let $T$ be a minimal rotation of $\mathbb{S}^1$  and $f\in \text{Homeo}(\mathbb{S}^1)$. If $f$ commutes with $T$, then $f$ is a rotation of $\mathbb{S}^1$.
\end{lemma}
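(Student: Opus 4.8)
The plan is to pass to the additive model $\mathbb{S}^1 = \mathbb{R}/\mathbb{Z}$, in which a minimal rotation $T$ is translation $T(x) = x + \alpha \pmod 1$ for some $\alpha$, and minimality is precisely the condition $\alpha \notin \mathbb{Q}$. In that case every orbit $\{x_0 + n\alpha : n \in \mathbb{Z}\}$ is dense in $\mathbb{S}^1$, which is the only dynamical input I will use. First I would rewrite the commutation hypothesis $fT = Tf$ as the functional equation $f(x+\alpha) = f(x) + \alpha$ for all $x \in \mathbb{S}^1$, and then iterate it (applying the relation to $x+\alpha$, and to $x-\alpha$ for the negative direction) to obtain $f(x + n\alpha) = f(x) + n\alpha$ for every $n \in \mathbb{Z}$.

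Next I would introduce the continuous map $h : \mathbb{S}^1 \to \mathbb{S}^1$ defined by $h(y) = f(y) - y$, where the subtraction is taken in the group $\mathbb{R}/\mathbb{Z}$; this is continuous because $f$ and the identity are continuous and the group operation is continuous. Fixing a basepoint $x_0$ and writing $c = f(x_0) - x_0$, the iterated relation yields $f(x_0 + n\alpha) = (x_0 + n\alpha) + c$, that is, $h \equiv c$ on the entire orbit $\{x_0 + n\alpha : n \in \mathbb{Z}\}$.

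Finally, since that orbit is dense by minimality of $T$ and $h$ is continuous, the set $h^{-1}(\{c\})$ is closed and dense, hence equal to all of $\mathbb{S}^1$. Therefore $f(y) = y + c$ for every $y$, so $f$ is the rotation $R_c$, as claimed.

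I do not expect a serious obstacle here: the only steps requiring care are the well-definedness and continuity of $h$ as a self-map of $\mathbb{R}/\mathbb{Z}$, and the elementary fact that a continuous circle map that is constant on a dense subset is globally constant. It is worth recording that the argument never assumes $f$ is orientation preserving; commutation with a minimal rotation alone forces $f$ to be a rotation (and in particular orientation preserving), so the stated hypothesis $f \in \text{Homeo}(\mathbb{S}^1)$ rather than $\text{Homeo}_{+}(\mathbb{S}^1)$ costs nothing.
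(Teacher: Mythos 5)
Your proof is correct and is essentially the paper's own argument, transposed from multiplicative to additive notation: both evaluate $f$ along the dense orbit $\{x_0+n\alpha\}$ (the paper uses the basepoint $1$ and writes $f(e^{{\rm i}2\pi n\theta})=e^{{\rm i}2\pi n\theta}f(1)$), observe that commutation forces $f$ to agree there with the rotation by $f(x_0)-x_0$, and conclude by continuity and density. Your auxiliary map $h(y)=f(y)-y$ just makes explicit the step the paper compresses into ``by the continuity of $f$,'' and your closing remark that orientation preservation is never needed matches the paper, whose statement likewise only assumes $f\in\text{Homeo}(\mathbb{S}^1)$.
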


\begin{proof}
Let $T: \mathbb{S}^1\rightarrow  \mathbb{S}^1, x\mapsto xe^{{\rm i}2\pi \theta}$, where $\theta$ is irrational. Since  $fT=Tf$,  $fT^n(1)=T^nf(1)$ for every integer $n$; that is $f(e^{{\rm i}2\pi n\theta})=e^{{\rm i}2\pi n\theta}f(1)$ for each $n$. Since $\{e^{{\rm i}2\pi n\theta}| n\in \mathbb Z\}$ is dense in $\mathbb S^1$, we have $f(x)=f(1)x$ for any $x\in\mathbb{S}^1$, by the continuity of $f$.
\end{proof}

For $a\in\mathbb R$, denote by $L_a$  the translation by $a$ on $\mathbb{R}$, i.e., $L_a(x)=x+a$ for $x\in \mathbb{R}$. The following proposition
can be deduced from Lemma \ref{rotation lemma} directly by quotienting the orbits of $L_1$.

\begin{proposition}\label{translation proposition}
Let $\alpha$ be an irrational number and $f\in \text{Homeo}_+(\mathbb R)$. If $f$ commutes with $L_1$ and $L_\alpha$ simultaneously, then $f=L_\beta$ for
some $\beta\in\mathbb R$.
\end{proposition}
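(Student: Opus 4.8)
The plan is to push the whole situation down to the circle along the quotient by the $L_1$-action and then quote Lemma~\ref{rotation lemma}. Write $\pi:\mathbb R\to\mathbb S^1$ for the covering map $\pi(x)=e^{{\rm i}2\pi x}$, whose fibers are exactly the $L_1$-orbits $x+\mathbb Z$. First I would descend the two given homeomorphisms through $\pi$. The hypothesis $fL_1=L_1f$ reads $f(x+1)=f(x)+1$, so $f$ carries $L_1$-orbits to $L_1$-orbits and therefore induces a continuous map $\bar f$ on $\mathbb S^1$ with $\pi f=\bar f\pi$. Applying the same remark to $f^{-1}$ (which also commutes with $L_1$) shows $\bar f$ is a homeomorphism of $\mathbb S^1$. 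Likewise $L_\alpha$ descends to $T:=\bar L_\alpha$, $T(z)=ze^{{\rm i}2\pi\alpha}$, and since $\alpha$ is irrational $T$ is a minimal rotation.

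Next I would transport the second commutation relation to the quotient. Using $\pi f=\bar f\pi$, $\pi L_\alpha=T\pi$ and $fL_\alpha=L_\alpha f$, one computes $\bar fT\pi=\bar f\pi L_\alpha=\pi fL_\alpha=\pi L_\alpha f=T\pi f=T\bar f\pi$; as $\pi$ is surjective this forces $\bar fT=T\bar f$. Now Lemma~\ref{rotation lemma} applies verbatim: a homeomorphism of $\mathbb S^1$ commuting with the minimal rotation $T$ must itself be a rotation, so $\bar f(z)=ze^{{\rm i}2\pi\gamma}$ for some $\gamma\in\mathbb R$.

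Finally I would lift this conclusion back to $\mathbb R$. The identity $\pi f=\bar f\pi$ now says $f(x)\equiv x+\gamma\pmod 1$ for every $x$, so $h(x):=f(x)-x-\gamma$ is a continuous $\mathbb Z$-valued function on the connected space $\mathbb R$ and hence constant, say $h\equiv m\in\mathbb Z$. Then $f(x)=x+(\gamma+m)$, i.e.\ $f=L_\beta$ with $\beta=\gamma+m$. The only step needing a moment's care is this last lifting—ensuring that the rotation $\bar f$ pulls back to a genuine translation of $\mathbb R$ rather than merely a map agreeing with one modulo $\mathbb Z$—but the connectedness of $\mathbb R$ settles it immediately, so no real obstacle arises.
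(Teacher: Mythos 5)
Your proof is correct and takes essentially the same approach as the paper, which states that Proposition \ref{translation proposition} ``can be deduced from Lemma \ref{rotation lemma} directly by quotienting the orbits of $L_1$''---precisely your descent along $\pi(x)=e^{{\rm i}2\pi x}$ followed by an application of the rotation lemma. You simply fill in the routine details the paper leaves implicit (descent of $f$ and the commutation relation to $\mathbb S^1$, and the connectedness argument showing the rotation lifts to a genuine translation).
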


\begin{lemma}[\cite{SZ2},Lemma 2.2]\label{Z2 action}
Let $H$ be a topologically transitive subgroup of $\text{Homeo}_{+}(\mathbb{R})$ which is isomorphic to $\mathbb{Z}^2$. Then $H$ is minimal.
\end{lemma}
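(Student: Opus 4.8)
The plan is to prove the lemma in two movements: first reduce to the case where the $\mathbb Z^2$-action is \emph{free} (no nontrivial element has a fixed point), and then invoke the classical Hölder theorem to conjugate the action to one by translations, where minimality is immediate. Since $H\cong\mathbb Z^2$ is countable and $\mathbb R$ is Polish without isolated points, I may fix a transitive point $x_0$ with $\overline{Hx_0}=\mathbb R$. I would also record the standard reformulation that topological transitivity is equivalent to saying every nonempty $H$-invariant open set is dense: if $U$ is invariant, open, and not dense, then $\mathbb R\setminus\overline U$ is a nonempty open set disjoint from $hU=U$ for all $h\in H$, contradicting transitivity.

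The heart of the argument is establishing freeness. Suppose some $h\neq\mathrm{id}$ fixes a point, and set $F=\mathrm{Fix}(h)$; since $H$ is abelian, $F$ is a nonempty, proper, closed, $H$-invariant set, and $x_0\notin F$ (else $\overline{Hx_0}\subseteq F\subsetneq\mathbb R$). Thus $x_0$ lies in a component $I$ of $\mathbb R\setminus F$, an open interval homeomorphic to $\mathbb R$. Let $H_0=\{k\in H:kI=I\}$; then $h\in H_0$, and one checks $Hx_0\cap I=H_0x_0$, so $H_0x_0$ is dense in $I$ and $H_0$ acts point-transitively on $I\cong\mathbb R$. I then split on the index $[H:H_0]$, which is the cardinality of the $H$-orbit of $I$. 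If $[H:H_0]=\infty$, then $H_0$ is an infinite-index subgroup of $\mathbb Z^2$, hence $H_0\cong\mathbb Z$; but a single orientation preserving homeomorphism of $\mathbb R$ has only monotone (hence non-dense) orbits, so a cyclic group cannot act point-transitively on $I\cong\mathbb R$, a contradiction. If $[H:H_0]<\infty$, then $Hx_0$ lies in the finite union of components in the orbit of $I$, so finitely many closed intervals cover $\overline{Hx_0}=\mathbb R$; as distinct components are separated by points of $F$, this forces $F$ to be finite and nonempty. Since an orientation preserving homeomorphism preserves the order of the finite invariant set $F$, every $k\in H$ fixes $\min F$, making $\min F$ a global fixed point; but then the ray $(\min F,\infty)$ is a nonempty invariant open set that is not dense, again contradicting transitivity. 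Hence the action is free.

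With freeness established, I would apply Hölder's theorem to obtain an injective order-preserving homomorphism $\tau:H\to(\mathbb R,+)$ and a monotone continuous surjection $\phi:\mathbb R\to\mathbb R$ with $\phi\circ h=L_{\tau(h)}\circ\phi$ for all $h\in H$. As $\tau$ is injective on $\mathbb Z^2$, the image $\Lambda=\tau(H)$ has rank $2$, hence is a dense subgroup of $\mathbb R$. It remains to upgrade the semiconjugacy $\phi$ to a conjugacy by ruling out collapsed intervals. If $J$ were a maximal interval on which $\phi$ is constant, then $hJ$ would be the collapsed interval over the translated value, and its stabilizer would be $\ker\tau=1$; thus $\{hJ:h\in H\}$ are pairwise disjoint, so $J$ is wandering. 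Choosing disjoint nonempty open $U,V\subseteq J$, transitivity yields $h$ with $hU\cap V\neq\emptyset$, forcing $hJ\cap J\neq\emptyset$, hence $hJ=J$ and $h=\mathrm{id}$, whence $U\cap V\neq\emptyset$ — a contradiction. Therefore $\phi$ is injective, hence a homeomorphism, and $H$ is conjugate to the translation group $\{L_a:a\in\Lambda\}$. Since $\Lambda$ is dense, every orbit $x+\Lambda$ is dense, so this translation action is minimal, and consequently $H$ is minimal.

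I expect the main obstacle to be Step~1, the proof of freeness, since this is exactly where the rank-$2$ hypothesis is indispensable: the dichotomy on $[H:H_0]$ uses that an infinite-index subgroup of $\mathbb Z^2$ is cyclic (and a cyclic group cannot be transitive on a line), while a finite-index stabilizer is pushed into the global-fixed-point contradiction. Steps~2 and~3 are comparatively routine, relying on Hölder's theorem and a standard wandering-interval argument; the only delicate point there is verifying that transitivity forbids the semiconjugacy from collapsing any interval.
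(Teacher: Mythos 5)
Your proof is correct, but be aware that there is no internal proof to compare it against: the paper imports Lemma \ref{Z2 action} verbatim from \cite{SZ2} (Lemma 2.2) and gives no argument, so the only meaningful comparison is with the source's method, and your route is the standard one carried out correctly. Your three movements are all sound: (i) the freeness step works because $H$ permutes the components of $\mathbb{R}\setminus \mathrm{Fix}(h)$, which justifies $Hx_0\cap I=H_0x_0$; in the infinite-index case $H_0$ is a nontrivial subgroup of $\mathbb{Z}^2$ of rank one containing $h$, hence infinite cyclic, and a cyclic group of orientation preserving homeomorphisms of $I\cong\mathbb{R}$ has only monotone, hence non-dense, orbits; in the finite-index case the closures of the finitely many components in the orbit of $I$ cover $\mathbb{R}$, so $F$ is finite and $\min F$ is a global fixed point, contradicting transitivity via the invariant ray. (ii) The appeal to H\"older is legitimate (freeness gives the Archimedean bi-invariant order, hence an injective $\tau:H\to(\mathbb{R},+)$), but you use silently one point worth making explicit: to obtain a \emph{continuous surjective} $\phi$, rather than a merely monotone semiconjugacy, you need the orbit $Hx_0$ to be unbounded in both directions; this follows from freeness, since $\sup Hx_0<\infty$ would be a global fixed point of the nontrivial free action. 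With that remark, the image of $\phi$ contains the rank-$2$, hence dense, subgroup $\Lambda=\tau(H)$, so $\phi$ has no jumps and is onto. (iii) The gap-killing argument is exactly right: $hJ=J$ forces $\tau(h)=0$, hence $h=\mathrm{id}$ by injectivity of $\tau$, so the gap family is wandering and transitivity applied to disjoint $U,V\subseteq J$ gives the contradiction. In fact your argument proves strictly more than the lemma asserts --- that $H$ is topologically \emph{conjugate} to a dense group of translations --- which is essentially the first structural step of the classification developed in \cite{SZ2}; this stronger conclusion is what the present paper actually exploits later (via Theorem \ref{classification of R}), so the extra strength is a feature rather than wasted effort.
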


\begin{lemma}\label{existence of minimal interval}
Let $H$ be a topologically transitive subgroup of $\text{Homeo}_{+}(\mathbb{R})$ which is isomorphic to $\mathbb{Z}^n$. Then there exists a nonempty open interval $(a,b)$ such that the restriction to $(a,b)$ of $F=\{h\in H: h(a,b)=(a,b)\}$ is minimal and the set of nontransitive points of $H$ is $\mathbb{R}\setminus \left(\bigcup_{h\in H}h(a,b)\right)$, where $a$ may be $-\infty$ and $b$ may be $+\infty$.
\end{lemma}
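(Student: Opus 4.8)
The plan is to argue by induction on the rank $n$, with Lemma~\ref{Z2 action} as the base case. If $H$ happens to act minimally (which for $n=2$ is forced by Lemma~\ref{Z2 action}), then every point is transitive and one takes $(a,b)=(-\infty,+\infty)$ and $F=H$; the asserted equality reads $\mathbb R\setminus\bigcup_{h}h\mathbb R=\varnothing$, the (empty) set of nontransitive points. So the content is the non-minimal case with $n\ge 3$, where I will descend to a proper subgroup of strictly smaller rank acting on a subinterval.

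The engine of the descent is the elementary observation that \emph{if $Hp$ is dense and $V$ is any nonempty open $H$-invariant set, then $p\in V$}: density gives some $h$ with $hp\in V$, and invariance forces $Hp\subseteq V$, so $p\in V$. Hence every transitive point lies in every nonempty open invariant set, i.e.\ no transitive point lies in a proper closed invariant set. Assuming $H$ is not minimal, pick a nontransitive $q$ and set $C=\overline{Hq}$, a proper nonempty closed invariant set. The transitive points are dense (they contain the dense orbit) and avoid $C$, so $C$ is nowhere dense and $\mathbb R\setminus C$ is a dense open invariant union of intervals which $H$ permutes. Fixing a transitive $p$, its dense orbit meets every such interval, and since $h$ carries the component of $p$ onto that of $hp$, every component is an $H$-translate of the component $(a',b')$ containing $p$; thus $H$ acts transitively on the components, with stabilizer $F'=\{h:h(a',b')=(a',b')\}$.

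Two facts drive the induction. First, the components are infinite in number (a nonempty closed invariant set here cannot be finite, else $H$ would pointwise fix it and so have a global fixed point, contradicting density; and an infinite closed nowhere dense set has infinitely many complementary intervals), so $[H:F']=\infty$ and therefore $F'$ has rank at most $n-1$. Second, $F'$ restricts to a topologically transitive action on $(a',b')$: as above one checks $Hp\cap(a',b')=F'p$ (any $h$ with $hp\in(a',b')$ fixes the component, hence lies in $F'$), so $F'p$ is dense in $(a',b')$. Since $(a',b')\cong\mathbb R$ and topological transitivity forces rank at least $2$, the inductive hypothesis applies to $F'$ on $(a',b')$ and yields a subinterval $(a,b)$ whose $F'$-stabilizer acts minimally and whose $F'$-translates exhaust the $F'$-transitive points of $(a',b')$.

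It remains to transfer this back to $H$. First $F:=\{h\in H:h(a,b)=(a,b)\}$ equals the $F'$-stabilizer of $(a,b)$, since any $h$ fixing $(a,b)\subseteq(a',b')$ sends points of $(a',b')$ into $(a',b')$ and so fixes that component, i.e.\ $h\in F'$; thus $F$ acts minimally on $(a,b)$. For the transitive set I would prove $\bigcup_{h\in H}h(a,b)$ equals it by two inclusions: a point of $(a,b)$ has $F'$-orbit dense in $(a',b')$, and translating by coset representatives spreads this to a set dense in $\bigcup_h h(a',b')=\mathbb R\setminus C$, dense in $\mathbb R$, so the point is $H$-transitive; conversely an $H$-transitive point avoids $C$, lands in some $h_0(a',b')$, and $h_0^{-1}$ of it is $F'$-transitive in $(a',b')$, hence in an $F'$-translate of $(a,b)$. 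This identifies the nontransitive set as $\mathbb R\setminus\bigcup_{h\in H}h(a,b)$ and shows, as a by-product, that this transitive set is open. I expect the main obstacle to be the two structural inputs to the descent—checking that $F'$ genuinely drops in rank (via the infinitude of components and the index count) and that topological transitivity passes cleanly between $H$ on a component and $F'$—together with the bookkeeping in the final reassembly.
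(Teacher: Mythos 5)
Your proposal is correct and follows essentially the same route as the paper: induction on the rank, with Lemma~\ref{Z2 action} as the base case, passing to a complementary component $(a',b')$ of the closure of a non-dense orbit, showing its stabilizer has infinite index (hence strictly smaller rank) and acts topologically transitively on the component, applying the inductive hypothesis there, and finally identifying the transitive set with $\bigcup_{h\in H}h(a,b)$. The only cosmetic difference is that you obtain $[H:F']=\infty$ by counting the infinitely many components on which $H$ acts transitively, whereas the paper exhibits a single $f\in H$ with $f^{k}(a_1,b_1)\cap(a_1,b_1)=\emptyset$ for all $k\neq 0$; both are sound.
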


\begin{proof}
We prove the lemma by induction on the rank of $H$. Firstly, we have $n\geq 2$, by the fact that $H$ is topologically transitive. By Lemma \ref{Z2 action}, if $n=2$, then take $(a,b)=(-\infty,+\infty)$. So we may assume that $n\geq3$ and the action of $H$ is not minimal. Suppose that the assertions hold for any topologically transitive subgroup of $\text{Homeo}_{+}(\mathbb{R})$ which is isomorphic to $\mathbb{Z}^m$, with $m<n$. Let $x_0\in\mathbb{R}$ such that $\overline{Hx_0}\neq \mathbb{R}$. Let $(a_1,b_1)$ be a connected component of $\mathbb{R}\setminus \overline{Hx_0}$ and let $F_1=\{h\in H: h(a_1,b_1)=(a_1,b_1)\}$. \\

If $F_1|_{(a_1,b_1)}$ is minimal, then $F_1$ and $(a_1,b_1)$ satisfy the requirements. Suppose that it is not minimal. For any $h\in H$, either $h(a_1,b_1)=(a_1,b_1)$ or $h(a_1,b_1)\cap (a_1,b_1)=\emptyset$. Thus the restriction of $F_1$ to $(a_1,b_1)$ is topologically transitive. Since $H$ is topologically transitive, there exists $f\in H$ such that $f(a_1,b_1)\cap (a_1,b_1)=\emptyset$. Furthermore, because $f$ preserves the orientation of $\mathbb{R}$, we have $f^{k}(a_1,b_1)\cap (a_1,b_1)=\emptyset$, for any $k\in\mathbb{Z}, k\neq 0$. Thus $[H:F_1]=\infty$. Take an orientation preserving homeomorphism $\varphi: (a_1,b_1)\rightarrow \mathbb{R}$. Then $\varphi F_1|_{(a_1,b_1)}\varphi^{-1}$ is a topologically transitive subgroup of $\text{Homeo}_{+}(\mathbb{R})$ which is isomorphic to $\mathbb{Z}^{l}$, for some $l<n$. By induction hypothesis, there exists a nonempty open interval $(a_2,b_2)$ such that the assertions in the lemma hold for $\varphi F_1|_{(a_1,b_1)}\varphi^{-1}$. Take $(a,b)=\varphi^{-1}(a_2,b_2)\subseteq (a_1,b_1)$. Then the restriction to $(a,b)$ of $F=\{h\in H: h(a,b)=(a,b)\}$ is minimal.\\

For any topologically transitive point $x\in\mathbb{R}$ of $H$, there exists $h\in H$ such that $h(x)\in (a,b)$. Thus $x\in h^{-1}(a,b)\subseteq \bigcup_{h\in H}h(a,b)$. Note that $\bigcup_{h\in H}h(a,b)$ is $H$-invariant and in which there are topologically transitive points, we have $\overline{\bigcup_{h\in H}h(a,b)}=\mathbb{R}$. Since $F|_{(a,b)}$ is minimal, $Hy$ is dense in $\bigcup_{h\in H}h(a,b)$, for any $y\in \bigcup_{h\in H}h(a,b)$. Hence $Hy$ is dense in $\mathbb{R}$. Therefore, $\bigcup_{h\in H}h(a,b)$ is the very set of transitive points. Consequently, the set of nontransitive points is $\mathbb{R}\setminus \left(\bigcup_{h\in H}h(a,b)\right)$.
\end{proof}

\begin{lemma}\label{almost minimality of subgroup}
Let $G$ be a topologically transitive and almost minimal subgroup of $\text{Homeo}_{+}(\mathbb{R})$ which is isomorphic to $\mathbb{Z}^n$. For any subgroup $H$ of $G$, if $H$ is topologically transitive, then it is also almost minimal.
\end{lemma}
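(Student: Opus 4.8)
The plan is to prove the stronger inclusion that every nontransitive point of $H$ is already a nontransitive point of $G$; since $G$ is almost minimal its set of nontransitive points is countable, and the almost minimality of $H$ then follows at once. Write $N_H$ and $N_G$ for the sets of nontransitive points of $H$ and $G$ respectively, so the goal becomes $N_H\subseteq N_G$.

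Two preliminary reductions set up the argument. First, as a subgroup of $G\cong\mathbb Z^n$, the group $H$ is free abelian of some finite rank $m\le n$, so $H\cong\mathbb Z^m$ and, being topologically transitive, it falls under the scope of Lemma \ref{existence of minimal interval}; applying that lemma to $H$ produces an interval $(a,b)$ for which the transitive set of $H$ equals the open set $T_H=\bigcup_{h\in H}h(a,b)$. In particular $T_H$ is open and nonempty, so $N_H=\mathbb R\setminus T_H$ is a proper closed subset of $\mathbb R$. Second---and this is the crux---since $G$ is abelian, $H$ is normal in $G$, and this forces $T_H$ (hence $N_H$) to be $G$-invariant: for any $g\in G$ and $x\in\mathbb R$ one has $H(gx)=\{hgx:h\in H\}=\{ghx:h\in H\}=g(Hx)$, whence $\overline{H(gx)}=g\,\overline{Hx}$, so $x$ is an $H$-transitive point if and only if $gx$ is. I expect verifying this $G$-invariance to be the main conceptual step, as it is exactly where the commutativity of $G$ is used; everything else is topological bookkeeping.

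With these in hand the conclusion is short. Suppose toward a contradiction that some $x\in N_H$ does not lie in $N_G$, i.e.\ $x$ is a transitive point of $G$. Then $Gx$ is dense in $\mathbb R$, while the $G$-invariance of $N_H$ gives $Gx\subseteq N_H$; since $N_H$ is closed we obtain $\mathbb R=\overline{Gx}\subseteq N_H$, contradicting that $T_H\supseteq(a,b)$ is nonempty. Hence no such $x$ exists, so $N_H\subseteq N_G$ and $N_H$ is countable. The only points requiring care are the applicability of Lemma \ref{existence of minimal interval} to $H$ (guaranteed once one notes $H\cong\mathbb Z^m$) together with the closedness and $G$-invariance of $N_H$ established above.
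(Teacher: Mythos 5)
Your proof is correct and follows essentially the same route as the paper: both apply Lemma \ref{existence of minimal interval} to $H$ to identify its nontransitive set as the closed complement of $\bigcup_{h\in H}h(a,b)$, then use commutativity of $G$ to see this set is $G$-invariant and hence contained in the (countable) nontransitive set of $G$. You merely spell out the last inclusion (dense $G$-orbit inside a proper closed invariant set is impossible), which the paper leaves implicit.
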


\begin{proof}
Since $H$ is topologically transitive, by Lemma \ref{existence of minimal interval}, there exists an interval $(a,b)$ such that the restriction to $(a,b)$ of $F=\{h\in H: h(a,b)=(a,b)\}$ is minimal, and the set of  nontransitive points of $H$ is $K:=\mathbb{R}\setminus \left(\bigcup_{h\in H}h(a,b)\right)$. Since $G$ is commutative, $K$ is a $G$-invariant closed set. Thus $K$ is contained in the set of   nontransitive points of $G$ which is countable by the almost minimality of $G$. Hence $H$ is also almost minimal.
\end{proof}

\section{Construction and Properties of $G_{\alpha,n}$}

In \cite{SZ2}, Shi and Zhou classified all the tightly transitive and almost minimal subgroups of $\text{Homeo}_{+}(\mathbb{R})$, which are isomorphic to
$\mathbb Z^n$ for any integer $n\geq 2$. These results are the starting point of the proof of the main theorem in this paper.

We first review the main results in \cite{SZ2}.  Let $\alpha$ be an irrational number in $(0,1)$ and $n\geq 2$ be an integer. Let $a,b\in\mathbb{R}$.  Denote by $\langle L_a, L_b\rangle$ the subgroup of $\text{Homeo}_{+}(\mathbb{R})$ generated by  $L_a$ and $L_b$ .

We define $G_{\alpha,n}$ inductively. Let $G_{\alpha,2}=\langle L_1, L_\alpha\rangle$. Suppose that we have constructed $G_{\alpha,n}$ for $n\geq 2$. Then
we construct $G_{\alpha, n+1}$ as follows. Choose a homeomorphism $\hbar$ from $\mathbb{R}$ to $(0,1)$. For example we can take
\begin{displaymath}
\hbar(x)=\frac{1}{\pi}\left(\frac{\pi}{2}+\arctan x\right)~~~\text{for } ~x\in\mathbb{R}.
\end{displaymath}

Then $\hbar$ induce an automorphism of  $\text{Homeo}_{+}(\mathbb{R})$ defined by
\begin{equation}\label{definition of h action}
\hbar(\sigma)(x)=\left\{
\begin{array}{cl}
\hbar \sigma \hbar^{-1}(x-i)+i,& x\in (i,i+1)~~\text{and} ~i\in\mathbb{Z},\\
x,& x\in\mathbb{Z},
\end{array}
\right.
\end{equation}
for $\sigma\in \text{Homeo}_{+}(\mathbb{R})$. Here we use the same symbol $\hbar$ to represent the automorphism, which will not lead to confusion from the text. For $n\in\mathbb{N}^+$, denote
\begin{displaymath}
\hbar^{(n)}(\sigma):=\hbar(\hbar(\cdots(\hbar(\sigma))\cdots)).
\end{displaymath}
By the definition, we immediately have the following relation.

\begin{lemma}\label{h morphism}
For any $\sigma_1,\sigma_2\in \text{Homeo}_{+}(\mathbb{R})$,
\begin{displaymath}
\hbar(\sigma_1\sigma_2)=\hbar(\sigma_1)\hbar(\sigma_2),~\text{and } \hbar(\sigma_1)L_1=L_1\hbar(\sigma_1).
\end{displaymath}
Furthermore, for any $m,n,k\in\mathbb{N}$,
\[
\hbar^{(m)}(\sigma_1)\hbar^{(n)}(L_1^k)=\hbar^{(n)}(L_1^k)\hbar^{(m)}(\sigma_1).
\]
\end{lemma}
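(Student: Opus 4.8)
The plan is to verify the two displayed identities by a direct case analysis on the location of the argument, and then to bootstrap the mixed-level commutation relation from these two facts using that $\hbar$ is an automorphism of $\text{Homeo}_{+}(\mathbb{R})$.

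For the homomorphism property $\hbar(\sigma_1\sigma_2)=\hbar(\sigma_1)\hbar(\sigma_2)$ I would check equality pointwise. On $\mathbb{Z}$ both sides fix every integer by the second branch of the defining formula, so there is nothing to prove. For $x\in(i,i+1)$ the crucial observation is that $\hbar(\sigma_2)$ maps $(i,i+1)$ into itself and fixes $\mathbb{Z}$: indeed $\hbar(\sigma_2)(x)=\hbar\sigma_2\hbar^{-1}(x-i)+i$ with $\hbar\sigma_2\hbar^{-1}(x-i)\in(0,1)$. Hence applying $\hbar(\sigma_1)$ uses the \emph{same} branch, and the leading $\hbar$ of the inner expression cancels the $\hbar^{-1}$ of the outer conjugation, giving $\hbar(\sigma_1)\hbar(\sigma_2)(x)=\hbar\sigma_1\sigma_2\hbar^{-1}(x-i)+i=\hbar(\sigma_1\sigma_2)(x)$. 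For $\hbar(\sigma_1)L_1=L_1\hbar(\sigma_1)$ I would again split into integers (trivial, since both maps send $\mathbb{Z}$ to $\mathbb{Z}$ compatibly) and $x\in(i,i+1)$; there the point is that the defining formula is \emph{independent of the interval index} $i$, so comparing $\hbar(\sigma_1)(x+1)$ (computed in $(i+1,i+2)$) with $\hbar(\sigma_1)(x)+1$ shows both equal $\hbar\sigma_1\hbar^{-1}(x-i)+i+1$.

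For the third relation I would not compute directly but deduce it from the first two. Iterating $\hbar(\sigma_1)L_1=L_1\hbar(\sigma_1)$ gives $\hbar(\tau)L_1^k=L_1^k\hbar(\tau)$ for every $\tau\in\text{Homeo}_{+}(\mathbb{R})$ and every $k$. Since $\hbar$, and hence each $\hbar^{(n)}$, is an endomorphism by the first identity, applying $\hbar^{(n)}$ to this relation yields
\[
\hbar^{(n+1)}(\tau)\,\hbar^{(n)}(L_1^k)=\hbar^{(n)}(L_1^k)\,\hbar^{(n+1)}(\tau).
\]
Substituting $\tau=\hbar^{(m-n-1)}(\sigma_1)$, which is legitimate as long as $m-n-1\geq 0$, turns the left factor into $\hbar^{(m)}(\sigma_1)$ and produces exactly the asserted commutation.

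The step to be careful about is the range of indices, which I regard as the main obstacle. The argument above establishes the third identity precisely in the regime $m>n$, and this is the case actually used in the inductive construction of $G_{\alpha,n}$. The relation cannot hold for an arbitrary $\sigma_1$ when $m\leq n$: since $\hbar$ is also injective (if $\hbar(\sigma)=\hbar(\sigma')$ then restricting to $(0,1)$ forces $\sigma=\sigma'$), applying $(\hbar^{(m)})^{-1}$ reduces the claimed relation to $\sigma_1$ commuting with $\hbar^{(n-m)}(L_1^k)$, a map that fixes $\mathbb{Z}$ and preserves each $(i,i+1)$; a generic $\sigma_1$ (already $\sigma_1=L_{1/2}$ with $m=0,\,n=1$) moves points across these intervals and fails to commute with it. I would therefore record and use the third identity under the hypothesis $m>n$.
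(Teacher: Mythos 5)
Your proof is correct, and for the first two identities it is the same argument as the paper's: a pointwise check on $\mathbb{Z}$ versus $x\in(i,i+1)$, using that $\hbar(\sigma_2)$ preserves each interval $(i,i+1)$ (so the same branch of the definition applies and the inner $\hbar$ cancels the outer $\hbar^{-1}$), and that the defining formula does not depend on the interval index $i$. For the third identity your algebra also matches the paper's, which computes $\hbar^{(m)}(\sigma_1)\hbar^{(n)}(L_1^k)=\hbar^{(n)}\big(\hbar^{(m-n)}(\sigma_1)L_1^k\big)=\hbar^{(n)}\big(L_1^k\hbar^{(m-n)}(\sigma_1)\big)=\hbar^{(n)}(L_1^k)\hbar^{(m)}(\sigma_1)$ --- your ``apply $\hbar^{(n)}$ to the commutation relation'' is this computation read in the other direction --- but you diverge on the admissible index range, and there you are right where the paper is careless. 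The paper says ``we may assume that $m\geq n$,'' yet its middle step needs $\hbar^{(m-n)}(\sigma_1)$ to commute with $L_1^k$, which the second identity supplies only when $m-n\geq 1$. At $m=n$, the injectivity of $\hbar$ (which you correctly isolate) shows the claimed relation is \emph{equivalent} to $\sigma_1L_1^k=L_1^k\sigma_1$, false for a general $\sigma_1$; and the case $m<n$ cannot be reduced to $m>n$ by symmetry, since $\sigma_1$ is arbitrary while $L_1^k$ is not --- your counterexample $\sigma_1=L_{1/2}$, $m=0$, $n=k=1$ settles this (and if one insists $\mathbb{N}$ excludes $0$, then $m=n=1$ with any $\sigma_1$ not commuting with $L_1$ already fails). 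So the hypothesis ``for any $m,n,k\in\mathbb{N}$'' should be $m>n$ (or $k=0$), exactly as you record, and this restriction costs nothing: every later invocation --- the commutation of $\hbar^{(n-2)}(g)$ with $q=e_3^{j_3}\cdots e_n^{j_n}$ in the proof of Lemma \ref{substitution}, and the computations with $\hbar^{(n-2-j)}(\phi_2)$ in Section 7 --- has the outer level strictly larger. In short: same mechanism as the paper, plus a genuine and needed repair of the stated index hypothesis.
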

\begin{proof}
For $x\in\mathbb{Z}$, $\hbar(\sigma_1)\hbar(\sigma_2)(x)=\hbar(\sigma_1)(x)=x=\hbar(\sigma_1\sigma_2)(x)$ and
\[
\hbar(\sigma_1)L_1(x)=\hbar(\sigma_1)(x+1)=x+1=L_1(x)=L_1\hbar(\sigma_1)(x).
\]
For $x\in(i,i+1), i\in\mathbb{Z}$,
\begin{eqnarray*}
\hbar(\sigma_1)\hbar(\sigma_2)(x)&=&\hbar(\sigma_1)\big(\hbar\sigma_2\hbar^{-1}(x-i)+i\big)\\
&=&\hbar\sigma_1\hbar^{-1}\Big(\big(\hbar\sigma_2\hbar^{-1}(x-i)+i\big)-i\Big)+i\\
&=&\hbar\sigma_1\sigma_2\hbar^{-1}(x-i)+i\\
&=&\hbar(\sigma_1\sigma_2)(x),
\end{eqnarray*}
and
\begin{eqnarray*}
\hbar(\sigma_1)L_1(x)&=&\hbar(\sigma_1)(x+1)=\hbar\sigma_1\hbar^{-1}(x-i)+i+1\\
&=&\hbar(\sigma_1)(x)+1=L_1\hbar(\sigma_1)(x).
\end{eqnarray*}
For the last assertion, we may assume that $m\geq n$. Then
\begin{eqnarray*}
\hbar^{(m)}(\sigma_1)\hbar^{(n)}(L_1^k)
&=&\hbar^{(n)}\Big(\hbar^{(m-n)}(\sigma_1)L_1^k\Big)\\
&=&\hbar^{(n)}\Big(L_1^k\hbar^{(m-n)}(\sigma_1)\Big)\\
&=&\hbar^{(n)}(L_1^k)\hbar^{(m)}(\sigma_1).
\end{eqnarray*}
\end{proof}

\begin{figure}[htp]
\centering
\includegraphics[width=13cm]{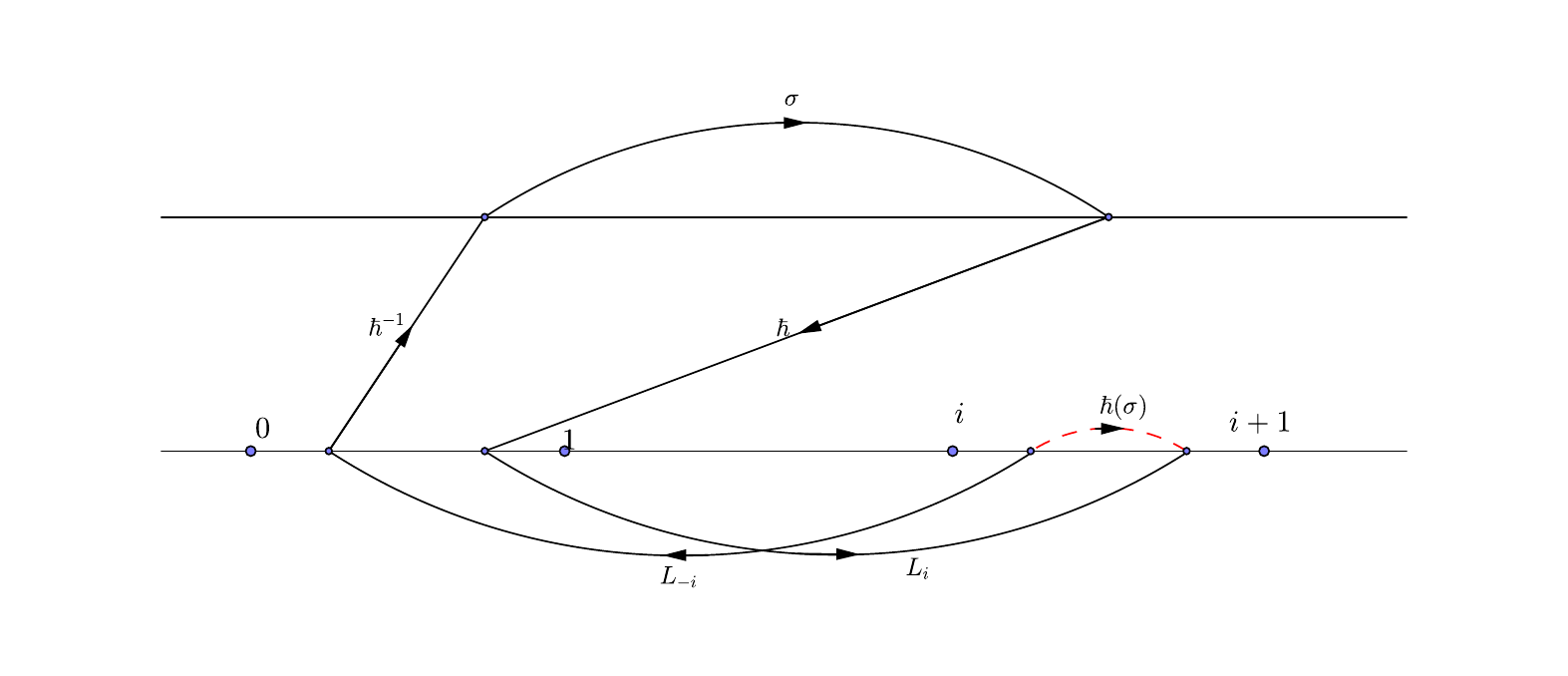}\\
\caption{Definition of $\hbar(\sigma)$}
\end{figure}
\noindent Let $G_{\alpha,n+1}$ be the group generated by $\{\hbar(\sigma): \sigma\in G_{\alpha,n}\}\cup\{L_1\}$. Then the constructed $G_{\alpha,n}$ has the following properties.

\begin{lemma}\label{properties of G(alpha,n)}
Let $\text{intr}G_{\alpha,n}$ denote the set of nontransitive points of $G_{\alpha,n}$. Then
\begin{itemize}
  \item [(1)] $G_{\alpha,n}$ is tightly transitive and is isomorphic to $\mathbb{Z}^n$.
  \item [(2)] $\text{intr} G_{\alpha,2}=\emptyset, \text{intr} G_{\alpha,3}=\mathbb{Z}$, and for $n\geq 4$,
  \begin{eqnarray*}
  \text{intr} G_{\alpha,n}&=&\left(\bigcup_{i\in \mathbb{Z}}\hbar(\text{intr} G_{\alpha,n-1})+i\right)\cup\mathbb{Z}\\
  &=& \mathbb{Z}\cup\bigcup_{i_1,\cdots,i_{n-2}\in\mathbb{Z}}\left\{\hbar\big(\hbar\big(\cdots (\hbar(i_1)+i_2)\cdots\big)+i_{n-3}\big)+i_{n-2}\right\}.
  \end{eqnarray*}
  \item [(3)] Suppose that $(a,b)$ is a connected component of $\mathbb{R}\setminus \text{intr}G_{\alpha,n}$ and $F=\{\sigma\in G_{\alpha,n}: \sigma((a,b))=(a,b)\}$. Then $F\mid_{(a,b)}$ is minimal and isomorphic to $\mathbb{Z}^2$. (Such an open interval $(a, b)$ is called a {\it minimal interval}.) Precisely,
      the minimal interval $(a,b)$ of $G_{\alpha,n}$ is of the following form:
\begin{enumerate}
  \item[a)]  if $n=2$, then $(a,b)=(-\infty,+\infty)$;
  \item[b)]  if $n=3$, then $(a,b)=(i,i+1)$, for some $i\in\mathbb{Z}$;
  \item[c)]  if $n\geq 4$, then $(a,b)=\hbar\big(\hbar\big(\cdots \hbar\big((i_1,i_1+1)+i_2)\big)\cdots\big)+i_{n-3}\big)+i_{n-2}$, for some $i_1,i_2,\cdots,i_{n-2}\in\mathbb{Z}$.
\end{enumerate}
\end{itemize}
\end{lemma}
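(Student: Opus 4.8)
The plan is to establish all three assertions simultaneously by induction on $n$, since the statements for $G_{\alpha,n+1}$ are built from those for $G_{\alpha,n}$. For the base case $n=2$, the group $G_{\alpha,2}=\langle L_1,L_\alpha\rangle$ consists of the translations by $\mathbb Z+\alpha\mathbb Z$; as $\alpha$ is irrational this subset of $\mathbb R$ is dense, so $G_{\alpha,2}$ is minimal (giving $\mathrm{intr}\,G_{\alpha,2}=\emptyset$ and the single minimal interval $(-\infty,+\infty)$ with $F=G_{\alpha,2}$, cf. Lemma \ref{Z2 action}), and it is free abelian of rank $2$ because $1,\alpha$ are rationally independent. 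Tight transitivity is immediate here: an infinite-index subgroup of $\mathbb Z^2$ has rank $\le 1$, hence acts through a single translation, whose orbits are discrete, so it cannot be topologically transitive.

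For the inductive step I would first record the algebraic structure of $G_{\alpha,n+1}$. By Lemma \ref{h morphism} the map $\hbar$ is an injective homomorphism on $\mathrm{Homeo}_+(\mathbb R)$, so $\hbar(G_{\alpha,n})\cong G_{\alpha,n}\cong\mathbb Z^n$, and $L_1$ commutes with every $\hbar(\sigma)$. Since each $\hbar(\sigma)$ fixes $\mathbb Z$ pointwise while $L_1^k$ moves integers for $k\neq 0$, one has $\langle L_1\rangle\cap\hbar(G_{\alpha,n})=\{\mathrm{id}\}$, so $G_{\alpha,n+1}$ is the internal direct product $\hbar(G_{\alpha,n})\times\langle L_1\rangle\cong\mathbb Z^{n+1}$. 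This yields a projection $\pi\colon G_{\alpha,n+1}\to\langle L_1\rangle\cong\mathbb Z$ with kernel $\hbar(G_{\alpha,n})$, which will organize the whole argument.

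For parts (2) and (3) the key computation is the orbit of a non-integer point. For $x\in(0,1)$ write $x=\hbar(y)$; since $\hbar(\sigma)$ preserves each $(i,i+1)$ while $L_1$ shifts them, a direct computation gives $G_{\alpha,n+1}x\cap(i,i+1)=L_1^{i}\hbar(G_{\alpha,n}y)$ for every $i$, so $x$ is a transitive point of $G_{\alpha,n+1}$ exactly when $y$ is a transitive point of $G_{\alpha,n}$ (the orbit is a union of $L_1$-translates of its $(0,1)$-piece, hence dense in $\mathbb R$ iff that piece is dense in $(0,1)$, iff $G_{\alpha,n}y$ is dense in $\mathbb R$). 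Integers are always nontransitive, their orbit being $\mathbb Z$. Together with $L_1$-invariance of the nontransitive set this gives $\mathrm{intr}\,G_{\alpha,n+1}=\mathbb Z\cup\bigcup_{i}\bigl(\hbar(\mathrm{intr}\,G_{\alpha,n})+i\bigr)$, from which the explicit formula follows by unrolling the induction; in particular $\mathrm{intr}\,G_{\alpha,3}=\mathbb Z$. For part (3), a component $(a,b)$ of $\mathbb R\setminus\mathrm{intr}\,G_{\alpha,n+1}$ lies in a single $(i,i+1)$ and equals $L_1^i\hbar((a',b'))$ for a component $(a',b')$ of $\mathbb R\setminus\mathrm{intr}\,G_{\alpha,n}$; these components are bounded away from the integers once $n+1\ge 4$ (and are exactly the $(i,i+1)$ when $n+1=3$), so no $L_1^k$ with $k\ne 0$ can preserve $(a,b)$, whence the stabilizer $F$ equals $\hbar(F')$ with $F'$ the stabilizer of $(a',b')$ in $G_{\alpha,n}$. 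The homeomorphism $L_1^i\hbar\colon(a',b')\to(a,b)$ then conjugates $F'|_{(a',b')}$ to $F|_{(a,b)}$, so the latter is minimal and $\cong\mathbb Z^2$ by the inductive hypothesis, and the nested form of the minimal intervals drops out.

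The main obstacle is tight transitivity in part (1), and this is where the projection $\pi$ does the decisive work. Suppose $H\le G_{\alpha,n+1}$ is topologically transitive with $[G_{\alpha,n+1}:H]=\infty$, and set $K=H\cap\hbar(G_{\alpha,n})=\hbar(K')$ with $K'\le G_{\alpha,n}$. If $\pi(H)=\{0\}$ then $H\subseteq\hbar(G_{\alpha,n})$ preserves $(0,1)$ and cannot be transitive; hence $\pi(H)$ has finite index in $\mathbb Z$, and from $[G_{\alpha,n+1}:H]=[\mathbb Z:\pi(H)]\cdot[G_{\alpha,n}:K']$ I conclude $[G_{\alpha,n}:K']=\infty$. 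By the inductive tight transitivity of $G_{\alpha,n}$, the infinite-index subgroup $K'$ is not topologically transitive, hence, being a countable group acting on a Polish space without isolated points, it has no transitive point: $\overline{K'y}\neq\mathbb R$ for every $y$. But for any non-integer $x\in(i,i+1)$ only the elements with $\pi=0$ keep $x$ in $(i,i+1)$, so $Hx\cap(i,i+1)=Kx=\hbar(K'y)+i$ with $y=\hbar^{-1}(x-i)$, which fails to be dense in $(i,i+1)$ precisely because $K'y$ is not dense in $\mathbb R$; and integer orbits lie in $\mathbb Z$. Thus $H$ has no transitive point and is not topologically transitive, contradicting the assumption. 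This contradiction—exactly the place where the inductive hypothesis on $G_{\alpha,n}$ is indispensable—establishes tight transitivity of $G_{\alpha,n+1}$ and closes the induction.
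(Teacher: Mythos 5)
Your proof is correct, but there is no in-paper proof to measure it against: the paper states Lemma \ref{properties of G(alpha,n)} without proof, presenting it as a summary of the results of \cite{SZ2}, so what you have written is a self-contained reconstruction of the omitted argument. Your induction rests on the right structural facts, and they all check out: by Lemma \ref{h morphism} the map $\hbar$ is an (injective) endomorphism of $\text{Homeo}_{+}(\mathbb{R})$ commuting with $L_1$, elements of $\hbar(G_{\alpha,n})$ fix $\mathbb{Z}$ pointwise while $L_1^k$ does not, so $G_{\alpha,n+1}=\hbar(G_{\alpha,n})\times\langle L_1\rangle\cong\mathbb{Z}^{n+1}$ with projection $\pi$; the slice computation $G_{\alpha,n+1}x\cap(i,i+1)=i+\hbar(G_{\alpha,n}y)$ correctly reduces transitive points, the nontransitive set, the components of its complement, and their stabilizers to level $n$; and $[G_{\alpha,n+1}:H]=[\mathbb{Z}:\pi(H)]\cdot[G_{\alpha,n}:K']$ is the standard index formula for $N=\ker\pi$. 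The genuinely delicate point is tight transitivity, and you handle it soundly: for a \emph{group} action a single dense orbit already implies topological transitivity (if $gx\in U$ and $fx\in V$ then $fg^{-1}(U)\cap V\neq\emptyset$, with no isolated-points hypothesis needed), so the inductive tight transitivity forces the infinite-index $K'$ to have no dense orbit; and since any $h\in H$ with $\pi(h)=k$ carries $(i,i+1)$ onto $(i+k,i+k+1)$, every slice $Hx\cap(i,i+1)$ is a $K$-orbit, hence not dense, while Baire (countable group, Polish space without isolated points) converts the absence of transitive points of $H$ into failure of topological transitivity. Two small cosmetic remarks. First, you never explicitly conclude that $G_{\alpha,n+1}$ itself is topologically transitive, which the definition of tight transitivity requires; this does follow from your part-(2) computation, since the transitive set is nonempty by induction, but it deserves a sentence. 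Second, in part (3) the appeal to components being ``bounded away from the integers'' when $n+1\geq 4$ is unnecessary and slightly misleading: the clean reason, valid uniformly for $n+1=3$ and $n+1\geq 4$, is that $L_1^k\hbar(\sigma)$ with $k\neq 0$ maps $(i,i+1)$ onto $(i+k,i+k+1)$ and therefore cannot preserve any nonempty subinterval of $(i,i+1)$, which immediately gives $F=\hbar(F')$. Neither point is a gap.
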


Suppose that $\alpha$ and $\beta$ are irrationals in $(0,1)$. We say that $\alpha$ is \emph{equivalent} to $\beta$ if there exist $m_1,n_1,m_2,n_2\in \mathbb{Z}$ with $\left|m_1n_2-n_1m_2\right|=1$ such that $\beta=\frac{m_1+n_1\alpha}{m_2+n_2\alpha}$. The following theorem completes the classification of tightly transitive and almost minimal subgroups of $\text{Homeo}_{+}(\mathbb{R})$, which are isomorphic to $\mathbb Z^n$ with $n\geq 2$.

\begin{theorem}\label{classification of R} The following assertions hold:

\begin{itemize}
\item[(1)] For any $n\geq 2$ and  irrationals $\alpha,\beta\in(0,1)$, the subgroup $G_{\alpha,n}$ is conjugate to $G_{\beta,n}$ by an orientation preserving homeomorphism if and only if $\alpha$ is equivalent to $\beta$.
\item[(2)] Let $G$ be a tightly transitive and almost minimal subgroup of $\text{Homeo}_{+}(\mathbb{R})$ which is isomorphic to $\mathbb{Z}^n$ for some $n\geq 2$. Then $G$ is conjugate to $G_{\alpha,n}$ by an orientation preserving homeomorphism for some irrational $\alpha\in(0,1)$.
\end{itemize}

\end{theorem}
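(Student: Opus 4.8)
The plan is to treat both assertions through a single mechanism: the conjugacy type of $G_{\alpha,n}$ is controlled by the $\mathbb Z^2$ translation action sitting on a minimal interval, and the whole group is rebuilt from that piece by iterating the automorphism $\hbar$ and adjoining $L_1$. I would first isolate the invariant. By Lemma \ref{properties of G(alpha,n)}(3) together with Lemma \ref{Z2 action}, the restriction of a minimal-interval stabilizer is a \emph{minimal} $\mathbb Z^2$ subgroup of $\mathrm{Homeo}_+(\mathbb R)$; such an action admits an invariant Radon measure $\mu$, unique up to scale, non-atomic and fully supported, and $x\mapsto\mu([x_0,x])$ conjugates it to translation by the homomorphism $g\mapsto\mu([x_0,gx_0])$. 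For $G_{\alpha,2}$ this image is $\mathbb Z+\mathbb Z\alpha$, so the proportionality class of the translation-number subgroup is a complete conjugacy invariant of the $\mathbb Z^2$ piece. Since $\mathbb Z+\mathbb Z\beta=\lambda(\mathbb Z+\mathbb Z\alpha)$ for some $\lambda>0$ is exactly the relation $\alpha\sim\beta$ — a $\mathbb Z$-basis change of $\mathbb Z+\mathbb Z\alpha$ is a matrix in $GL(2,\mathbb Z)$ and $\lambda=1/(m_2+n_2\alpha)$ is the normalizing scale — this identifies the invariant with the $GL(2,\mathbb Z)$-orbit of $\alpha$.

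For part (1), a conjugacy $\phi$ sends nontransitive points to nontransitive points, hence $\phi(\mathrm{intr}\,G_{\alpha,n})=\mathrm{intr}\,G_{\beta,n}$ and maps minimal intervals to minimal intervals while intertwining their stabilizers; restricting to one minimal interval yields a conjugacy of the two $\mathbb Z^2$ pieces, so $\alpha\sim\beta$. For the converse I would lift by induction using the functoriality of $\hbar$ from Lemma \ref{h morphism}: the base case $n=2$ is the affine map $x\mapsto\lambda x$ with $\lambda=1/(m_2+n_2\alpha)$, and given a conjugacy $\psi$ from $G_{\alpha,n}$ to $G_{\beta,n}$ the element $\hbar(\psi)$ commutes with $L_1$, fixes $\mathbb Z$, and satisfies $\hbar(\psi)\hbar(\sigma)\hbar(\psi)^{-1}=\hbar(\psi\sigma\psi^{-1})$, hence conjugates $G_{\alpha,n+1}=\langle\hbar(G_{\alpha,n}),L_1\rangle$ onto $G_{\beta,n+1}$.

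For part (2) I would induct on $n$. When $n=2$, Lemma \ref{Z2 action} gives minimality and the measure above conjugates $G$ to some $G_{\alpha,2}$, normalizing $\alpha\in(0,1)$ by $\sim$. For $n\ge3$, $G$ is not minimal (a minimal $\mathbb Z^n$ action is conjugate to a dense group of translations and hence contains an infinite-index transitive rank-$2$ subgroup, contradicting tight transitivity); taking a nontransitive $x$, every point of $\overline{Gx}\neq\mathbb R$ is nontransitive, so $\overline{Gx}$ is countable and closed, and a minimal-set argument produces the unique minimal set $M$ as a discrete orbit order-isomorphic to $\mathbb Z$. This yields a surjection $G\to\mathbb Z$ (the shift on $M$) with kernel $N\cong\mathbb Z^{n-1}$, the setwise stabilizer of each component $(c,d)$ of $\mathbb R\setminus M$. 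I then check that $N|_{(c,d)}$ is faithful (an element fixing $[c,d]$ pointwise fixes a $G$-invariant uncountable set, against almost minimality), almost minimal and transitive (since $Gx\cap(c,d)=Nx$), and tightly transitive (an infinite-index transitive $N'\le N$ would make $\langle N',t\rangle$ an infinite-index transitive subgroup of $G$, with $t$ lifting a generator of $G\to\mathbb Z$). By induction $N|_{(c,d)}$ is conjugate to $G_{\alpha,n-1}$; composing with $\hbar$ and spreading over all components by $t\mapsto L_1$ equivariance gives $\Phi$ with $\Phi G\Phi^{-1}=\langle\hbar(G_{\alpha,n-1}),L_1\rangle=G_{\alpha,n}$.

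The hardest part will lie in part (2): first, showing that the unique minimal set is genuinely discrete of order type $\mathbb Z$ and that $G\to\mathbb Z$ has kernel of rank exactly $n-1$; and second, checking that the per-component conjugacies — each determined only up to scale by the induction — glue to a single homeomorphism of $\mathbb R$ that is continuous and order-preserving across the points of $M$, which forces one to normalize the gluing (via the invariant measures) so that $\Phi$ extends over $M$. In part (1) the one substantive point is the scale-invariance of $\mathbb Z+\mathbb Z\alpha$, i.e.\ that conjugate minimal $\mathbb Z^2$ translation actions have proportional translation-number groups.
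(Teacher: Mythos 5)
Your overall architecture is sound, and in fact it reconstructs the route of \cite{SZ2} — which is worth noting up front: the paper you are being compared against does \emph{not} prove Theorem \ref{classification of R} at all, but imports it from \cite{SZ2}; the fragments of that proof visible in the paper (minimality of transitive $\mathbb{Z}^2$ actions in Lemma \ref{Z2 action}, the interval structure of Lemma \ref{existence of minimal interval}, and in Section 7 the base conjugation $\phi_2=M_u$ with $u=|m_2+n_2\beta|$ lifted by $\phi_n=\hbar^{(n-2)}(\phi_2)$ via Lemma \ref{h morphism}) match your plan exactly. Your part (1) is correct: the forward direction via transport of the nontransitive set, restriction to a minimal interval, and proportionality of translation-number groups of conjugate minimal $\mathbb{Z}^2$ actions (uniqueness up to scale of the invariant Radon measure, or equivalently H\"older's theorem, since a minimal abelian action is free) is exactly the content of \cite[Lemma 3.3]{SZ2}; the converse by $\hbar$-functoriality is \cite[Theorem 3.4]{SZ2}.

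The one genuine gap is in part (2), and it is precisely the point you flag but then wave past: \emph{existence} of the minimal set $M$. On a noncompact space minimal sets need not exist, so ``a minimal-set argument produces the unique minimal set'' is not yet an argument. It can be patched along your lines, but the patch has real content: $E=\overline{Gx}$ is closed, invariant and countable; each derived set $E^{(\delta)}$ is again closed and invariant, and any \emph{countable minimal} set is automatically discrete (its derived set is a proper closed invariant subset, hence empty by minimality), closed, a single orbit, and of order type $\mathbb{Z}$ (a bounded end would give a global fixed point, contradicting transitivity). But to get a nonempty discrete $E^{(\delta)}$ by Cantor--Bendixson you must rule out the nested intersection dying at a limit ordinal, which needs, e.g., that every nonempty closed invariant set meets a fixed compact interval $[y,g(y)]$ for some fixed-point-free $g\in G$ — and the existence of a fixed-point-free element itself requires an argument (invariant Radon measure plus nonvanishing translation number, or extraction from Lemma \ref{existence of minimal interval}). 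Alternatively one can dispense with minimal sets entirely and induct on the interval structure of Lemma \ref{existence of minimal interval}, as the paper does for the adjacent Lemma \ref{almost minimality of subgroup}. Uniqueness of $M$, which you assert, is never needed. By contrast, the other step you single out as hard — gluing the per-component conjugacies — is actually automatic and needs no measure-theoretic normalization: defining $\Phi$ on $t^k(c,d)$ as $L_1^k\circ\varphi\circ t^{-k}$ yields an increasing bijection of $\mathbb{R}$ carrying each gap onto $(k,k+1)$ and $M$ onto $\mathbb{Z}$, and a monotone bijection of $\mathbb{R}$ is a homeomorphism; commutativity of $G$ then gives $\Phi h\Phi^{-1}=\hbar(\varphi h|_{(c,d)}\varphi^{-1})$ on the nose, so $\Phi G\Phi^{-1}=\langle\hbar(G_{\alpha,n-1}),L_1\rangle=G_{\alpha,n}$ exactly as in the definition of the action of $\hbar$ in Section 3.
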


From the construction  of $G_{\alpha,n}$, we can define a basis $\{e_1, ..., e_n\}$ of $G_{\alpha,n}$ as a $\mathbb Z$ module.
For $G_{\alpha,2}$, we take $e_1=L_1$ and $e_2=L_\alpha$.
Generally, for $n\geq 3$, take
\begin{eqnarray*}
&&e_1=\hbar^{(n-2)}(L_1),e_2=\hbar^{(n-2)}(L_{\alpha}),\\&&e_3=\hbar^{(n-3)}(L_{1}), \cdots,\\&&e_{n-1}=\hbar^{(1)}(L_{1}), e_n=L_1.
\end{eqnarray*}
 The basis
$\{e_1, ..., e_n\}$ so defined is called the {\it standard basis} of $G_{\alpha,n}$.

Now we prove a technical lemma.

\begin{lemma}\label{substitution}
Suppose that  $\{ e_1, \cdots, e_n\}$ is the standard basis of $G_{\alpha,n}$, where $n\geq 2$ and $\alpha$ is an irrational number in $(0,1)$. If $f\in\text{Homeo}_{+}(\mathbb{S}^1)$ commutes with every element of $G_{\alpha,n}$ and $f\neq \text{id}$, then there exists some $i\in \{1, 2\}$ such that the group $\langle f, e_i, e_3, ..., e_n\rangle$ is also tightly transitive, almost minimal and isomorphic to $\mathbb{Z}^n$.
\end{lemma}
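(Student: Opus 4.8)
The plan is to argue by induction on $n$, using the recursive presentation $G_{\alpha,n}=\langle \hbar(G_{\alpha,n-1}),L_1\rangle$ and Proposition \ref{translation proposition} at the base. First I would pass from $\mathbb S^1$ to the line. Since $L_1$ (which is $e_n$ for $n\geq 3$ and $e_1$ for $n=2$) lies in $G_{\alpha,n}$ and has no fixed point in $\mathbb R$, the unique fixed point of $G_{\alpha,n}$ on $\mathbb S^1$ is the point $p$ at infinity; as $f$ commutes with $G_{\alpha,n}$ it carries $\text{Fix}(G_{\alpha,n})$ into itself, so $f(p)=p$ and $f$ restricts to an orientation preserving homeomorphism of $\mathbb R=\mathbb S^1\setminus\{p\}$. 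Thus I may regard $f\in\text{Homeo}_+(\mathbb R)$ commuting with $G_{\alpha,n}$. For the base case $n=2$, $f$ commutes with $L_1$ and $L_\alpha$, so $f=L_\beta$ with $\beta\neq 0$ by Proposition \ref{translation proposition}; if $\beta$ is irrational then $\langle L_\beta,L_1\rangle\cong\mathbb Z^2$ is a minimal (hence tightly transitive and almost minimal) translation group and $i=1$ works, while if $\beta$ is a nonzero rational then $\beta$ and $\alpha$ are rationally independent, $\beta\mathbb Z+\alpha\mathbb Z$ is dense, and $\langle L_\beta,L_\alpha\rangle\cong\mathbb Z^2$ is minimal, so $i=2$ works.

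For the inductive step ($n\geq 3$) I would first normalize $f$. The subgroup $\hbar(G_{\alpha,n-1})=\langle e_1,\dots,e_{n-1}\rangle$ fixes exactly $\mathbb Z$: it fixes each integer by construction and, on each $(i,i+1)$, acts as a conjugate of the transitive group $G_{\alpha,n-1}$, which has no global fixed point. Since $f$ commutes with $\hbar(G_{\alpha,n-1})$, it maps $\mathbb Z$ onto $\mathbb Z$, and being orientation preserving it acts there as $m\mapsto m+c$ with $c=f(0)\in\mathbb Z$. Replacing $f$ by $e_n^{-c}f=L_1^{-c}f$ alters neither the hypotheses nor the group $\langle f,e_i,e_3,\dots,e_n\rangle$ (because $e_n$ is one of the generators), so I may assume $f$ fixes $\mathbb Z$ pointwise, hence preserves $(0,1)$. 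Using $fL_1=L_1f$ one sees that $f$ is determined on each $(i,i+1)$ by its restriction to $(0,1)$, which yields $f=\hbar(\hat f)$ for $\hat f:=\hbar^{-1}(f|_{(0,1)})\hbar\in\text{Homeo}_+(\mathbb R)$; conjugating $f|_{(0,1)}\,\hbar\sigma\hbar^{-1}=\hbar\sigma\hbar^{-1}\,f|_{(0,1)}$ by $\hbar^{-1}$ shows that $\hat f$ commutes with $G_{\alpha,n-1}$.

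Provided $f\notin\langle L_1\rangle$, equivalently $\hat f\neq\text{id}$, the induction hypothesis gives $i\in\{1,2\}$ with $H':=\langle \hat f,e_i',e_3',\dots,e_{n-1}'\rangle$ tightly transitive, almost minimal and isomorphic to $\mathbb Z^{n-1}$, where $\{e_j'\}$ is the standard basis of $G_{\alpha,n-1}$. Applying the homomorphism $\hbar$ of Lemma \ref{h morphism} and using $\hbar(e_j')=e_j$ together with $f=\hbar(\hat f)$, I obtain $\langle f,e_i,e_3,\dots,e_{n-1}\rangle=\hbar(H')$, and adjoining $e_n=L_1$ gives $\langle f,e_i,e_3,\dots,e_n\rangle=\langle \hbar(H'),L_1\rangle$. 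By Theorem \ref{classification of R}(2) there is $\psi\in\text{Homeo}_+(\mathbb R)$ and an irrational $\beta$ with $\psi G_{\beta,n-1}\psi^{-1}=H'$; since $\hbar$ is a homomorphism and every $\hbar(\cdot)$ commutes with $L_1$, the element $\hbar(\psi)$ conjugates $G_{\beta,n}=\langle \hbar(G_{\beta,n-1}),L_1\rangle$ onto $\langle \hbar(H'),L_1\rangle$. Hence $\langle f,e_i,e_3,\dots,e_n\rangle$ is conjugate to $G_{\beta,n}$, which is tightly transitive, almost minimal and isomorphic to $\mathbb Z^n$ by Lemma \ref{properties of G(alpha,n)}(1), closing the induction.

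The main obstacle is the normalization of the previous paragraph: correcting $f$ by the power $L_1^{-c}$, recognizing $f=\hbar(\hat f)$, and checking that the descended map $\hat f$ still commutes with $G_{\alpha,n-1}$ so that the induction hypothesis applies; once this identification is secured, the conjugacy invariance of the construction under $\hbar(\psi)$ does the rest. I would also treat the degenerate case $f\in\langle L_1\rangle$ separately, where $\hat f=\text{id}$: note that for $f=L_1^c$ with $c\neq 0$ and $n\geq 3$ one has $\langle f,e_i,e_3,\dots,e_n\rangle=\langle e_i,e_3,\dots,e_n\rangle$ of rank $n-1$, so the substantive content of the lemma is precisely the complementary case $f\notin\langle L_1\rangle$, which is exactly the range the induction covers.
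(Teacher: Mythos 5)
Your route is genuinely different from the paper's: the paper argues in one shot, observing that $f$ permutes the minimal intervals of $G_{\alpha,n}$, correcting $f$ to $g'=fe_3^{-k_3}\cdots e_n^{-k_n}$ so that $g'$ stabilizes the minimal interval $(a,b)$, and then proving the global identity $g'=\hbar^{(n-2)}(L_\theta)$; your induction via the descent $f=L_1^{-c}\hbar(\hat f)$ reproduces this one $\hbar$-level at a time, and your appeal to Theorem \ref{classification of R}(2) to recognize $\langle \hbar(H'),L_1\rangle$ as a conjugate of some $G_{\beta,n}$ would, if the induction closed, pleasantly replace the paper's ad hoc verification of tight transitivity in the rational-$\theta$ case. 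The mechanics of the descent are also fine: the fixed set of $\hbar(G_{\alpha,n-1})$ is exactly $\mathbb{Z}$, the normalization by $L_1^{-c}$ is harmless since $e_n=L_1$ is among the generators, and $\hat f$ does commute with $G_{\alpha,n-1}$.

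The gap is in the inductive step and in your closing remark. You invoke the induction hypothesis for every $\hat f\neq\text{id}$, but the statement you are proving by induction is false on a larger degenerate set than the one you excluded: at rank $m\geq 3$ the conclusion fails for every nontrivial element of $\langle e_3,\dots,e_m\rangle$, not only for powers of $L_1$. Indeed, if $\hat f\in\langle e_3',\dots,e_{n-1}'\rangle\setminus\{\text{id}\}$, then $\langle\hat f,e_i',e_3',\dots,e_{n-1}'\rangle=\langle e_i',e_3',\dots,e_{n-1}'\rangle$ has rank $n-2$ and is not topologically transitive (the stabilizer of a minimal interval restricts to the cyclic group of a single fixed-point-free homeomorphism, so no orbit is dense). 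Concretely, take $n=4$ and $f=e_3=\hbar(L_1)$: then $f\notin\langle L_1\rangle$, your normalization gives $c=0$ and $\hat f=L_1\neq\text{id}$, and the rank-$3$ instance you call on is false, since $\langle L_1,e_i',e_3'\rangle=\langle e_i',L_1\rangle$. So your final sentence --- that the induction covers exactly the complementary case $f\notin\langle L_1\rangle$ --- is wrong: the lemma's conclusion itself fails for all nontrivial $f\in\langle e_3,\dots,e_n\rangle$, and your induction silently walks into that region. The repair is to carry the stronger hypothesis $f\notin\langle e_3,\dots,e_n\rangle$ through the induction; your descent preserves it, since $f=L_1^{c}\hbar(\hat f)$ lies in $\langle e_3,\dots,e_n\rangle$ if and only if $\hat f\in\langle e_3',\dots,e_{n-1}'\rangle$ (evaluate at $0$ to match the $L_1$-powers, then use injectivity of $\hbar$ and $\hbar(e_j')=e_j$), and at rank $2$ the excluded set is just $\{\text{id}\}$. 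In fairness, the paper's proof has the same blind spot: it asserts $\theta\in\mathbb{R}\setminus\{0\}$ without justification, and $\theta=0$ occurs exactly when $f\in\langle e_3,\dots,e_n\rangle$; the paper is rescued only because in its sole application (Case 2 of Proposition \ref{tightly transitive of stable subgroup}) the element $f^k\mid_{(x_1,x_2)}$ is assumed to lie outside the conjugate of $G_{\alpha,m}$, hence outside the degenerate set, so you should state and prove the lemma under that stronger exclusion rather than under $f\neq\text{id}$.
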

\begin{proof}
For $n=2$, we know that $f=L_{\beta}$ for some $\beta\in \mathbb{R}\setminus\{0\}$ by Proposition \ref{translation proposition}. If $\beta\in \mathbb{Q}$, then $\langle L_\beta, L_{\alpha}\rangle$ is tightly transitive and minimal. If $\beta$ is irrational, then $\langle L_1, L_\beta\rangle$ is tightly transitive and minimal. Thus the conclusion holds for $G_{\alpha, 2}$.\\

For $n\geq 3$, by Lemma \ref{properties of G(alpha,n)}, there is a minimal interval $(a,b)$.  By Lemma \ref{properties of G(alpha,n)} (3), we can take $(a,b)=\hbar\big(\cdots\hbar\big((0,1)\big)\cdots\big)$, where the number of the iterations is $n-3$. Set $F=\{\sigma\in G_{\alpha,n}: \sigma((a,b))=(a,b)\}$. Then, by the definition of standard basis, $F=\langle e_1, e_2\rangle$.  Since $f$  commutes with every element of $G_{\alpha,n}$, $f(a,b)$ is still a minimal interval of $G_{\alpha,n}$. By the structure of the minimal interval, there exist $k_3,\cdots,k_n\in\mathbb{Z}$ such that
\[
f(a,b)=\hbar\big(\hbar\big(\cdots \hbar\big((k_3,k_3+1)+k_4)\big)\cdots\big)+k_{n-1}\big)+k_{n}.
\]
Then $fe_3^{-k_3}...e_n^{-k_n}(a,b)=(a,b)$.\\

Let $g'=fe_3^{-k_3}...e_n^{-k_n}$. Define $g\in \text{Homeo}_{+}(\mathbb{S}^1)$ by
$g(x)=\hbar^{-(n-2)} g' \hbar^{n-2}(x)$. Then $g'$ commutes with $L_1$ and $L_\alpha$. By Proposition \ref{translation proposition}, there exists some $\theta\in \mathbb{R}\setminus\{0\}$ such that $g=L_{\theta}$.\\

We claim that $g'=\hbar^{(n-2)}(g)$. By the choice of $(a,b)=\hbar\big(\cdots\hbar\big((0,1)\big)\cdots\big)$, we have, for $x\in(a,b)$,
\[
\hbar^{(n-2)}(g)(x)=\hbar^{(n-3)}(g)(x)=\cdots=g'(x).
\]
For $x\in\{a,b\}$, it is obvious that $\hbar^{(n-2)}(g)(x)=g'(x)$. Now for any $x\in \mathbb{R}\setminus\mathbb{Z}$, there exist $j_3,\cdots,j_{n}\in\mathbb{Z}$ such that $e_3^{j_3}\cdots e_{n}^{j_n}(x)\in[a,b]$. Set $q=e_3^{j_3}\cdots e_{n}^{j_n}$. Thus
\[
g'(x)=q^{-1}g'q(x)=q^{-1}(\hbar^{(n-2)}(g))q(x)=\hbar^{(n-2)}(g)(x).
\]
The last equality follows by Lemma \ref{h morphism}.
As for $x\in\mathbb{Z}$, it is obvious that $g'(x)=\hbar^{(n-2)}(g)(x)=x$. Thus the claim is followed.\\

Now it is clear that $\langle f, e_i, e_3, ..., e_n\rangle=\langle g', e_i, e_3, ..., e_n\rangle$, for $i\in\{1,2\}$.\\

If $\theta$ is irrational, then $\langle f, e_1, e_3,...,e_n\rangle= \langle \hbar^{(n-2)}(L_{\theta}), e_1, e_3,...,e_n\rangle=G_{\theta,n}$  satisfies the requirements. If $\theta$ is rational, then $\langle f, e_2, e_3,...,e_n\rangle= \langle \hbar^{(n-2)}(L_{\theta}), e_2, e_3,...,e_n\rangle$ also satisfies the requirements. Indeed, in this case, the set of nontransitive points is
 \[
\mathbb{Z}\cup\bigcup_{i_1,\cdots,i_{n-2}\in\mathbb{Z}}\left\{\hbar\big(\hbar\big(\cdots (\hbar(i_1)+i_2)\cdots\big)+i_{n-3}\big)+i_{n-2}\right\},
 \]
 which is countable. Hence $\langle f, e_2, e_3,...,e_n\rangle$ is almost minimal. Let $H=\langle f, e_2, e_3,...,e_n\rangle$ and suppose that $F$ is topologically transitive subgroup of $H$. Note that $(a,b):=(\hbar^{n}(0), \hbar^{n}(1))$ is a minimal interval of $H$. Let $E=\{h\in H: h(a,b)=(a,b)\}$. Then $E=\langle f, e_2 \rangle$. By the topological transitivity of $F$, $(F\cap E)|(a,b)$ is also topologically transitive, whence $(F\cap E)|(a,b)\cong \mathbb{Z}^2$. For $i=3,\cdots,n$, $e_i$ is the unique element of $H$ that maps $(a,b)$ to another minimal interval $e_i(a,b)$. Thus $\{e_3,\cdots,e_n\}\subseteq F$. Hence $F\cong \mathbb{Z}^n$, which means that $F$ is a subgroup of $H$ of finite index. Therefore, $H$ is tightly transitive. This completes the proof.
\end{proof}

\section{Construction and Properties of $G_{\alpha,n,k,g,f}$}

Let integers  $n\geq 2$ and $k\geq 1$. Let $\alpha$ be an irrational number in $(0,1)$. Let $G_{\alpha,n}^s=\{g^s: g\in G_{\alpha,n}\}$, for $s\in\mathbb{N}^*$. Suppose  $g\in G_{\alpha, n}\setminus \bigcup_{gcd(k,s)\neq 1}G_{\alpha,n}^s$.  Put $x_j=e^{{\rm\bf i}2\pi j/k}$ for $j=1,...,k$. Denote by $(x_i, x_{i+1})~(\text{resp. }[x_i,x_{i+1}])$ the open (resp. closed) interval from $x_i$ to $x_{i+1}$ anticlockwise. Fix an orientation preserving homeomorphism $\phi$ from $\mathbb{R}$ to $(x_1, x_2)$. Then $\phi$ define a homomorphism:
\[
\widetilde{\qquad } :\text{Homeo}_{+}(\mathbb{R})\longrightarrow \text{Homeo}_{+}((x_1,x_2)),~~\sigma\mapsto \tilde{\sigma}=\phi\sigma\phi^{-1}.
\]

Let $f\in \text{Homeo}_{+}(\mathbb{S}^1)$ be such that
\begin{itemize}
  \item $f(x_i)=x_{i+1 }$,  for $i=1,...,k$.
  \item $f^{k}\mid_{(x_{1},x_2)}=\tilde{g}: (x_{1},x_2)\rightarrow (x_1,x_2)$.
\end{itemize}
In the above definition, we take $x_{k+1}=x_1$. In the reminder of the paper we take this convention as well. We denote the collection of such $f\in \text{Homeo}_{+}(\mathbb{S}^1)$ by $\text{Homeo}_{+}(\mathbb{S}^1)_{k,g}$.\\

Now we define a homomorphism:
\[
\overset{\overline{\quad}}{\quad} \text{~or~}  \overset{\overline{\quad}f} {\quad}: \text{Homeo}_{+}(\mathbb{R})\longrightarrow \text{Homeo}_{+}(\mathbb{S}^1),~~\sigma\mapsto \overline{\sigma}^f,
\]
where $\overline{\sigma}^f$ is an orientation preserving homeomorphism of $\mathbb{S}^1$ defined by
\begin{equation}\label{extension}
\overline{\sigma}^f(x)=\left\{
\begin{array}{cll}
f^{i-1}\tilde{\sigma}f^{-(i-1)}(x),& x\in (x_i,x_{i+1}),& i=1,..., k,\\
x_{i},& x=x_i, & i=1,...,k.
\end{array}
\right.
\end{equation}
We denote $\overline{\sigma}^f$ by  $\overline{\sigma}$ for short when it is clear that $\sigma$ is extended by $f$.\\
\begin{remark}
Note that $f$ does not commute with $\overline{\sigma}^f$ in general. $f$ commutes with $\overline{\sigma}^f$ if and only if $\sigma$ commutes with $g$. In particular,  $\overline{\sigma}^f$ commutes with $f$, for any $\sigma \in G_{\alpha,n}$.
\end{remark}
 Now we define $G_{\alpha,n,k,g,f}$ to be the subgroup of $\text{Homeo}_{+}(\mathbb{S}^1)$ generated by $\{\overline{\sigma}:\sigma\in G_{\alpha, n}\}\cup\{f\} $.\\
\begin{figure}[htp]
\centering
\includegraphics[width=8cm]{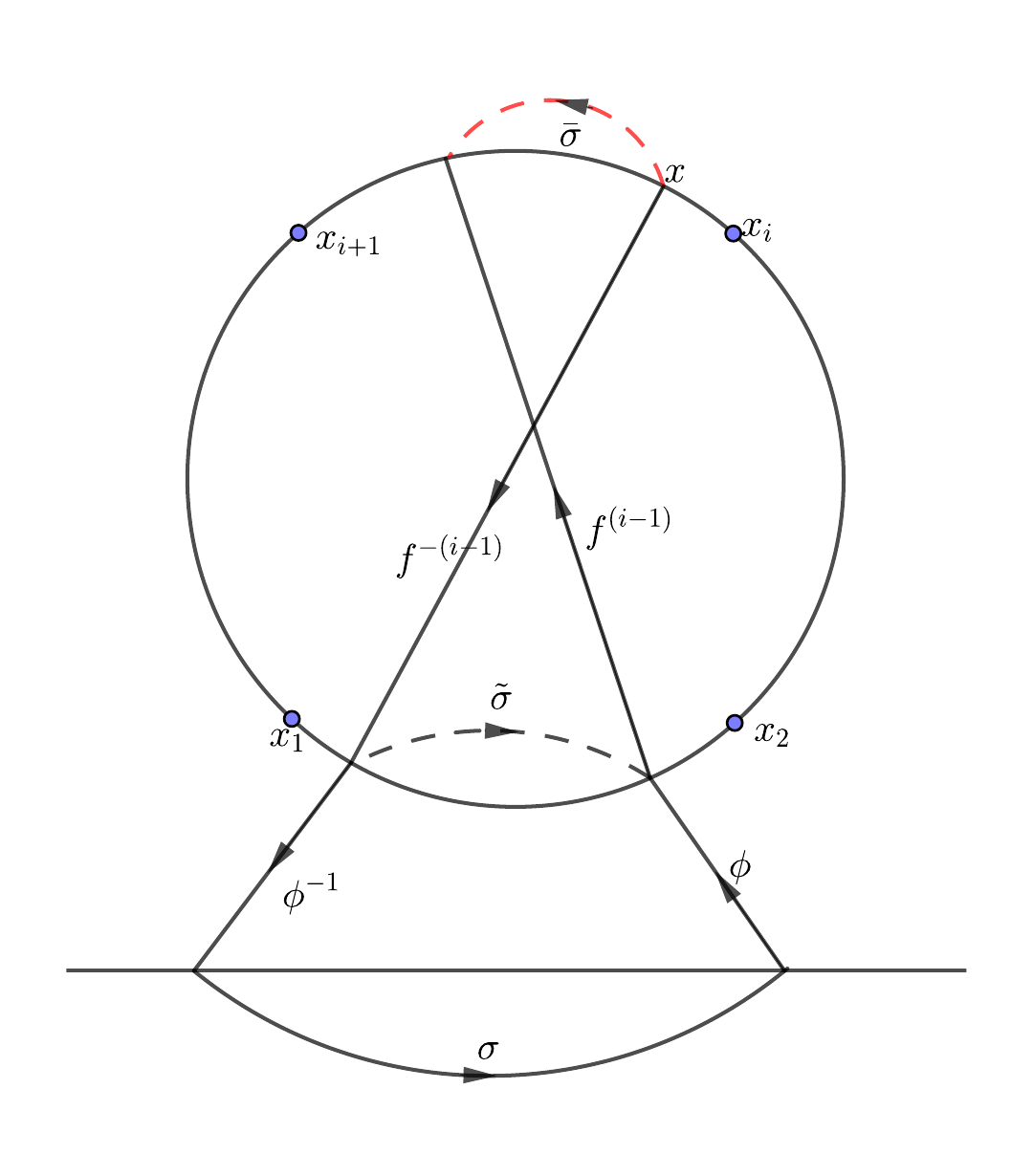}\\
\caption{Definition of $\overline{\sigma}^{f}$}
\end{figure}

By the above construction, we immediately have
\begin{lemma} The following assertions hold.

\begin{itemize}
  \item [(1)] Let $H=\{ \varphi\in G_{\alpha,n,k,g,f}: \varphi((x_1, x_2))=(x_1, x_2)\}$, then $H=\overline{G_{\alpha,n}} =\{\overline{\sigma}:\sigma\in G_{\alpha,n}\}$, $H|_{(x_1,x_2)}=\widetilde{G_{\alpha,n}}=\phi G_{\alpha,n}\phi^{-1}$ and $G_{\alpha,n,k,g,f}/H\cong \mathbb{Z}/k\mathbb{Z}$. Moreover,
  \begin{displaymath}
  G_{\alpha,n,k,g,f} =H\cup f H\cdots \cup f^{k-1}H.
  \end{displaymath}
  \item [(2)] $G_{\alpha,n,k,g,f}$ is a tightly transitive and  almost minimal subgroup of $\text{Homeo}_{+}(\mathbb{S}^1)$.
  \item [(3)] $G_{\alpha,n,k,g,f}$ is isomorphic to $\mathbb{Z}^n$.
\end{itemize}
\end{lemma}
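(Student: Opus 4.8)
The plan is to establish the three assertions in turn, first extracting the purely group-theoretic skeleton of $G_{\alpha,n,k,g,f}$ and then reading off its dynamics from that of $G_{\alpha,n}$ on $\mathbb R$. For assertion (1) I would rest everything on two facts already available: the map $\sigma\mapsto\overline\sigma$ is an \emph{injective} homomorphism $G_{\alpha,n}\to\mathrm{Homeo}_+(\mathbb S^1)$ (injectivity because $\overline\sigma|_{(x_1,x_2)}=\tilde\sigma$ determines $\sigma$), and $f$ commutes with every $\overline\sigma$ (the Remark), so $G_{\alpha,n,k,g,f}$ is abelian. The crucial identity is $f^k=\overline g$: both sides fix every $x_i$, agree on $(x_1,x_2)$ by the defining relation $f^k|_{(x_1,x_2)}=\tilde g$, and agree on $(x_i,x_{i+1})$ after conjugating by $f^{i-1}$. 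Using commutativity and $f^k=\overline g\in\overline{G_{\alpha,n}}$, every element is uniquely $f^m\overline\sigma$ with $0\le m<k$, which yields the coset decomposition $G=\bigcup_{m=0}^{k-1}f^m H$; since $f^m$ carries $(x_1,x_2)$ to the disjoint arc $(x_{1+m},x_{2+m})$, such an element stabilizes $(x_1,x_2)$ iff $m\equiv0\pmod k$, giving $H=\overline{G_{\alpha,n}}$, hence $H|_{(x_1,x_2)}=\widetilde{G_{\alpha,n}}$ and $G/H\cong\mathbb Z/k\mathbb Z$ generated by $fH$. This part is routine once $f^k=\overline g$ is in hand.

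For assertion (3) I would present $G$ as a quotient of $\overline{G_{\alpha,n}}\times\langle f\rangle\cong\mathbb Z^n\times\mathbb Z$. The surjection $(\sigma,m)\mapsto\overline\sigma f^m$ has kernel exactly $\langle(g,-k)\rangle$: if $\overline\sigma f^m=\mathrm{id}$ then $f^m\in H$ forces $k\mid m$, and then $f^m=\overline{g^{m/k}}$ together with injectivity of $\overline{\,\cdot\,}$ pins down $\sigma=g^{-m/k}$. Thus $G\cong(\mathbb Z^{n}\times\mathbb Z)/\langle(g,-k)\rangle$, which is free of rank $n$ precisely when $(g,-k)$ is a primitive vector, i.e. $\gcd(g_1,\dots,g_n,k)=1$ for the coordinates of $g$ in a basis of $G_{\alpha,n}$. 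This is exactly where the hypothesis $g\notin\bigcup_{\gcd(k,s)\neq1}G_{\alpha,n}^s$ enters: for each prime $p\mid k$ it gives $g\notin G_{\alpha,n}^p$, i.e. $p\nmid\gcd(g_1,\dots,g_n)$, and ranging over all such $p$ forces $\gcd(g_1,\dots,g_n,k)=1$; hence $G\cong\mathbb Z^n$.

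For assertion (2), almost minimality and point transitivity come from transporting the arc dynamics to $\mathbb R$. Because $f$ commutes with $H$ and $Gy=\bigcup_{m=0}^{k-1}f^m Hy$, a point $y=\phi(z)\in(x_1,x_2)$ has dense $G$-orbit iff $Hy$ is dense in $(x_1,x_2)$ iff $z$ is a transitive point of $G_{\alpha,n}$; so the nontransitive set of $G$ is $\{x_1,\dots,x_k\}\cup\bigcup_{i=1}^{k}f^{i-1}\phi(\mathrm{intr}\,G_{\alpha,n})$, countable by Lemma \ref{properties of G(alpha,n)}(2), giving almost minimality and (since $\mathbb S^1$ is Polish without isolated points and $G$ is countable) topological transitivity.

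The real work is tight transitivity, and this is the step I expect to be the main obstacle. I would let $F\le G$ be topologically transitive and reduce to $[H:F\cap H]<\infty$, since $[G:F]=[G:FH]\,[H:F\cap H]$ with $[G:FH]\le k$. To bound $[H:F\cap H]$ I would show that $(F\cap H)|_{(x_1,x_2)}$ is topologically transitive on the arc: given nonempty open $U,V\subseteq(x_1,x_2)$, transitivity of $F$ yields $\gamma=f^m\overline\sigma\in F$ with $\gamma(U)\cap V\neq\emptyset$, but $\gamma(U)\subseteq(x_{1+m},x_{2+m})$ and disjointness of the arcs forces $m\equiv0\pmod k$, i.e. $\gamma\in F\cap H$. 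Conjugating by $\phi$ identifies $(F\cap H)|_{(x_1,x_2)}$ with a topologically transitive subgroup of $G_{\alpha,n}$, which has finite index by the tight transitivity of $G_{\alpha,n}$ in Lemma \ref{properties of G(alpha,n)}(1); since the restriction $H\to H|_{(x_1,x_2)}$ is an isomorphism onto $\widetilde{G_{\alpha,n}}\cong G_{\alpha,n}$, this transfers back to $[H:F\cap H]<\infty$, whence $[G:F]<\infty$ and $G$ is tightly transitive. The crux is precisely this conversion of global transitivity of $F$ on $\mathbb S^1$ into transitivity of $F\cap H$ on a single arc; once achieved, the statement collapses onto the already-established tight transitivity of $G_{\alpha,n}$.
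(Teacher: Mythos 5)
Your argument is correct throughout, and for assertions (1) and (2) it is essentially the paper's route with the details filled in: the paper dismisses (1) as ``direct from the construction,'' and for tight transitivity it simply asserts the key step that $(F\cap H)|_{(x_1,x_2)}$ is topologically transitive before running the same index bookkeeping via $[G_{\alpha,n,k,g,f}:H]=k$ and $[H|_{(x_1,x_2)}:(F\cap H)|_{(x_1,x_2)}]<\infty$; your disjoint-arcs argument (an element $f^m\overline{\sigma}$ moving $U\subseteq(x_1,x_2)$ back into $(x_1,x_2)$ must have $k\mid m$, hence lies in $H$ since $f^k=\overline{g}$) is exactly the justification that asserted step needs, and the identity $f^k=\overline{g}$ you prove is the tacit backbone of the coset decomposition in both write-ups. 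Where you genuinely diverge is (3). The paper works directly with torsion: any torsion element has the form $f^j\overline{h}$ with $1\le j\le k-1$, and $(f^j\overline{h})^{kr}=\mathrm{id}$ restricted to $(x_1,x_2)$ yields $g^j=h^{-k}$, whence $\langle g,h\rangle$ is cyclic, $g=w^s$, $h=w^{-t}$ with $sj=tk$, forcing $\gcd(s,k)\neq1$ and contradicting the choice of $g$; it then invokes the general fact that a finitely generated torsion-free abelian group containing $H\cong\mathbb{Z}^n$ with finite index is itself $\mathbb{Z}^n$. You instead exhibit the presentation $G\cong(\mathbb{Z}^n\times\mathbb{Z})/\langle(g,-k)\rangle$ and check primitivity of $(g,-k)$. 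Your version buys two things: the isomorphism type drops out in one step (Smith normal form gives $\mathbb{Z}^n\oplus\mathbb{Z}/d\mathbb{Z}$ with $d=\gcd(g_1,\dots,g_n,k)$, so torsion-freeness and rank come simultaneously), and it makes transparent that the hypothesis on $g$ enters only through the primes dividing $k$, i.e.\ $g\notin G_{\alpha,n}^p$ for each prime $p\mid k$ already suffices. The paper's computation, in exchange, ties the hypothesis directly to explicit torsion elements, which is what supports the converse construction in Remark \ref{torsion}. Both proofs are complete; your kernel computation ($\overline{\sigma}f^m=\mathrm{id}$ forces $k\mid m$ by the arc argument, then $\sigma=g^{-m/k}$ by injectivity of $\sigma\mapsto\overline{\sigma}$) is sound.
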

\begin{proof}
(1) is direct from the construction of $G_{\alpha,n,k,g,f}$. As for (2), it is clear that $G_{\alpha,n,k,g,f}$ is topologically transitive and  almost minimal, since $G_{\alpha,n}$ is tightly transitive and almost minimal. For any topologically transitive subgroup $F$ of $G_{\alpha,n,k,g,f}$, $(F\cap H)|_{(x_1,x_2)}$ is topologically transitive. By (1), $H|_{(x_1,x_2)}$ is tightly transitive. Thus $[H|_{(x_1,x_2)}: (F\cap H)|_{(x_1,x_2)}]<\infty$. Then $[G_{\alpha,n,k,g,f}: F]<\infty$, since $[G_{\alpha,n,k,g,f}:H]=k$. Therefore, $G_{\alpha,n,k,g,f}$ is tightly transitive.\\

It remains to show (3). Note that the possible torsion elements of $G_{\alpha,n,k,g,f}$ are of the form $f^j\bar{h}$ for some $h\in G_{\alpha,n}$ and $j=1,\cdots,k-1$. If it is a torsion element, then there exists a positive integer $r$ such that $(f^j\bar{h})^{kr}=\text{id}$. Particularly,
\begin{displaymath}
(f^j\bar{h})^{kr}\mid_{(x_1,x_2)}=\widetilde{g^{jr} h^{kr}}=(\widetilde{g^jh^k})^r=\text{id}.
\end{displaymath}
Then $(g^jh^k)^r=\text{id}$. Since $G_{\alpha,n}$ is torsion-free, we have $g^{j}=h^{-k}\in G_{\alpha,n}^k$. Since the group $\langle g,h\rangle$ is free and abelian, it is cyclic. Thus there exists $w\in G_{\alpha,n} $ such that $g=w^s, h=w^{-t}$ for some positive integers $s,t$. Then $sj=tk$. Since $1\leq j\leq k-1$, we have $gcd(s,k)\neq 1$. Therefore, $g\in  G_{\alpha, n}\setminus \bigcup_{gcd(k,s)\neq 1}G_{\alpha,n}^s$, which contradicts the choice of $g$. \\

Now we know that  $G_{\alpha,n,k,g,f}$ is a finitely generated and torsion-free abelian group. This together with the facts that $G_{\alpha,n,k,g,f}/H\cong \mathbb{Z}/k\mathbb{Z}$ and $H\cong \mathbb{Z}^n$ imply that  $G_{\alpha,n,k,g,f}$ is isomorphic to $\mathbb{Z}^n$.
\end{proof}

\begin{remark}\label{torsion}
In the above proof, we know that $G_{\alpha,n,k,g,f}$ is torsion free for $g\in  G_{\alpha, n}\setminus \bigcup_{gcd(k,s)\neq 1}G_{\alpha,n}^s$. Conversely, if $g\in G_{\alpha,n}^s$ with $gcd (s,k)\neq 1$, then there exist torsion elements. Indeed, if $gcd(s,k)=k_1\neq 1$ (we write $k=k_1k_2$ and $s=k_1s_1$) and $g=w^s\in G_{\alpha,n}^s$, then $f^{j}\overline{h^{-1}}$ with $j=k_2s_2$ and $h=w^{s_1s_2}$ is a torsion element for any integer $s_2$.
\end{remark}

\section{Tightly transitive subgroups of $\text{Homeo}_{+}(\mathbb{S}^1)$}

Suppose that $G$ is a tightly transitive and almost minimal subgroup of $\text{Homeo}_{+}(\mathbb{S}^1)$ which is isomorphic to $\mathbb{Z}^n$ with $n\geq 2$.
 It follows from Proposition \ref{existence of finte orbit} that $G$ has a finite orbit $\{x_1,\cdots,x_k\}$ for some $k\geq 1$. We assume that $x_1,\cdots,x_k$ are on the circle in the anticlockwise ordering.

\begin{proposition}\label{tightly transitive of stable subgroup}
 Let $H=\{g\in G:  g(x_i)=x_i,  1\leq i\leq k\}$.
Then $G/H\cong \mathbb{Z}/k\mathbb{Z}$. Moreover, the restriction of $H$ to $(x_1,x_2)$ is tightly transitive and almost minimal.
\end{proposition}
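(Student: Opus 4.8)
The plan is to dispatch the quotient statement and the transitivity/almost-minimality assertions by soft structural arguments, and to concentrate the real effort on tight transitivity.

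For $G/H\cong\mathbb{Z}/k\mathbb{Z}$ I argue as follows. Since $G\subseteq\text{Homeo}_{+}(\mathbb{S}^1)$ preserves orientation and leaves the cyclically ordered finite set $\{x_1,\dots,x_k\}$ invariant, every $g\in G$ acts on it as a cyclic rotation $x_i\mapsto x_{i+\rho(g)}$ for a well-defined $\rho(g)\in\mathbb{Z}/k\mathbb{Z}$, so $\rho\colon G\to\mathbb{Z}/k\mathbb{Z}$ is a homomorphism with kernel exactly $H$. As $\{x_1,\dots,x_k\}$ is a single $G$-orbit, $\rho(G)$ acts transitively on $\mathbb{Z}/k\mathbb{Z}$ and hence equals it, giving $G/H\cong\mathbb{Z}/k\mathbb{Z}$. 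Fixing $g_1\in G$ with $\rho(g_1)=1$ yields the decomposition $G=\bigsqcup_{j=0}^{k-1}g_1^jH$ with $g_1^j(x_i)=x_{i+j}$, so in particular $g_1^k\in H$.

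Each element of $H$ fixes every $x_i$, hence preserves each arc $(x_i,x_{i+1})$. I first record that the restriction $H\to H|_{(x_1,x_2)}$ is injective: a nontrivial $h$ acting as the identity on $(x_1,x_2)$ would have $\text{Fix}(h)\supseteq[x_1,x_2]$ a closed, $G$-invariant (as $G$ is abelian) proper subset with nonempty interior, so every point of $(x_1,x_2)$ would be a nontransitive point of $G$, contradicting almost minimality. For transitivity and almost minimality I use the coset decomposition: for $z\in(x_1,x_2)$ the arcs $g_1^j(x_1,x_2)=(x_{1+j},x_{2+j})$ are pairwise disjoint, whence $Gz\cap(x_1,x_2)=Hz$, and so for every open $U\subseteq(x_1,x_2)$ one has $U\cap Gz=U\cap Hz$. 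Thus a point of $(x_1,x_2)$ is $G$-transitive if and only if it is $H|_{(x_1,x_2)}$-transitive. Choosing a $G$-transitive $z\in(x_1,x_2)$ (such points are co-countable) gives a dense $H$-orbit, so $H|_{(x_1,x_2)}$ is point- hence topologically transitive; and its nontransitive points form a subset of those of $G$, so there are at most countably many, giving almost minimality.

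Tight transitivity is the heart of the matter and the step I expect to be the main obstacle. I argue by contradiction: if $H|_{(x_1,x_2)}$ is not tightly transitive, choose among all infinite-index (equivalently, rank $\le n-1$) topologically transitive subgroups of $H$ one, $F$, of minimal rank $r$. Since a rank-one subgroup cannot be topologically transitive on $(x_1,x_2)\cong\mathbb{R}$, we have $2\le r\le n-1$. Minimality of $r$ forces $F$ to be tightly transitive (a proper infinite-index topologically transitive subgroup of $F$ would be one of $H$ of strictly smaller rank), and by Lemma~\ref{almost minimality of subgroup} it is almost minimal; hence by Theorem~\ref{classification of R}, after conjugating $(x_1,x_2)$ to $\mathbb{R}$, $F$ is identified with some $G_{\gamma,r}$ with standard basis $e_1,\dots,e_r$. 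The difficulty is that the naive enlargement $\langle F,g_1\rangle$, while topologically transitive on $\mathbb{S}^1$, may have rank $n$ (finite index) and so yield no contradiction.

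The decisive move is to control this rank using Lemma~\ref{substitution}. The element $f:=g_1^k|_{(x_1,x_2)}$ commutes with all of $F$ (as $G$ is abelian) and is nontrivial (since $g_1^k\neq\mathrm{id}$ by torsion-freeness and the restriction is faithful), so Lemma~\ref{substitution} produces a tightly transitive, almost minimal subgroup $F^{\ast}=\langle f,e_i,e_3,\dots,e_r\rangle\cong\mathbb{Z}^r$ of $H$ that now contains $g_1^k$. Setting $N:=\langle F^{\ast},g_1\rangle$, the computation $N\cap H=F^{\ast}\langle g_1^k\rangle=F^{\ast}$ gives $\text{rank}\,N=\text{rank}\,F^{\ast}=r<n$, because $g_1^k\in F^{\ast}$ forces $g_1\in F^{\ast}\otimes\mathbb{Q}$; meanwhile $N$ is topologically transitive on $\mathbb{S}^1$, since a dense $F^{\ast}$-orbit in $(x_1,x_2)$ is carried by the powers of $g_1$ onto all $k$ arcs. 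Thus $N$ is an infinite-index topologically transitive subgroup of $G$, contradicting the tight transitivity of $G$. In short, the crux is the rank-control step, and Lemma~\ref{substitution} is exactly the tool that lets one trade a basis vector of the minimal transitive subgroup for $g_1^k$, so that adjoining the arc-permuting element $g_1$ does not raise the rank.
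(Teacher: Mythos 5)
Your architecture tracks the paper's proof closely: the same quotient argument gives $G/H\cong\mathbb{Z}/k\mathbb{Z}$ (your $\rho$ is the paper's choice of $f$ with $f(x_1)=x_2$), the identity $Gz\cap(x_1,x_2)=Hz$ supplies transitivity and almost minimality of $H|_{(x_1,x_2)}$ (the paper simply calls this clear; your injectivity remark is a worthwhile supplement), and the tight-transitivity argument uses the same chain of tools: a minimal-rank reduction to a tightly transitive $F$, Lemma~\ref{almost minimality of subgroup}, Theorem~\ref{classification of R} to identify $F|_{(x_1,x_2)}$ with some $G_{\gamma,r}$, and Lemma~\ref{substitution} to trade a basis vector for $g_1^k$; your rank computation for $N=\langle F^{\ast},g_1\rangle$ is equivalent bookkeeping to the paper's index count $[\tilde F:F]=k$, $[G:\tilde F]=\infty$. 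However, there is one step that can fail: you apply Lemma~\ref{substitution} \emph{unconditionally}, whereas the paper splits into Case 1 ($f^k|_{(x_1,x_2)}\in F|_{(x_1,x_2)}$, handled directly by adjoining $f$, without the lemma) and Case 2 ($f^k|_{(x_1,x_2)}\notin F|_{(x_1,x_2)}$, where the lemma is invoked). This split is not cosmetic. Lemma~\ref{substitution}, taken literally with only the hypothesis $f\neq\text{id}$, is false at an edge: if $f\in\langle e_3,\dots,e_n\rangle$, say $f=e_3$, then $\langle f,e_i,e_3,\dots,e_n\rangle=\langle e_i,e_3,\dots,e_n\rangle\cong\mathbb{Z}^{n-1}$, which has the wrong rank and is not topologically transitive (its stabilizer of a minimal interval restricts to a single fixed-point-free homeomorphism). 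Internally, the lemma's proof there yields $g'=fe_3^{-k_3}\cdots e_n^{-k_n}=\text{id}$, hence $\theta=0$, and the construction collapses. In the paper's Case 2 the hypothesis $f^k|\notin F|$ forces $g'\neq\text{id}$, so $\theta\neq 0$ and the lemma is sound; your merged application includes precisely the dangerous regime where $g_1^k|_{(x_1,x_2)}$ lies in $\langle e_3,\dots,e_r\rangle\subseteq F|_{(x_1,x_2)}$, and there your $F^{\ast}$ need not contain what you claim it contains with the claimed rank and transitivity, so the final contradiction evaporates.

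The repair is exactly the paper's Case 1 and is one line: if $g_1^k\in F$ (which covers the bad regime), take $F^{\ast}:=F$ directly; then $N=\langle F,g_1\rangle$ has rank $r<n$ because $g_1^k\in F$, is topologically transitive on $\mathbb{S}^1$ by your arc-sweeping argument, and has infinite index in $G$ --- the desired contradiction stands. Only when $g_1^k\notin F$ should you invoke Lemma~\ref{substitution}, and there its proof goes through since $\theta\neq 0$. (Note also that for $k=1$ your chosen $g_1$ with $\rho(g_1)=1$ may be the identity, so Lemma~\ref{substitution} is again unavailable; the same patch $F^{\ast}=F$ disposes of that degenerate case.) With this case distinction reinstated, your proof is complete and in substance identical to the paper's.
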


\begin{proof}
Take an $f\in G$ such that $f(x_1)=x_2$. Since $f$ is orientation preserving, we have $f(x_i)=x_{i+1}$ for each $i$. Thus $f^k(x_i)=x_i$ and $f^k\in H$.

For any $g\in G$, suppose that $g(x_1)=x_j$ for some $1\leq j\leq k$. Then $f^{-(j-1)}g(x_1)=x_1$ and so $f^{-(j-1)}g(x_i)=x_i$ for each $i$. Thus  $f^{-(j-1)}g\in H$ , that is  $g\in f^{(j-1)}H$. Therefore,
\begin{displaymath}
G/H\cong \mathbb{Z}/k\mathbb{Z}.
\end{displaymath}
It is clear that the restriction of $H$ to $(x_1,x_2)$ must be topologically transitive and almost minimal. It remains to show it is tightly transitive.

If the restriction of $H$ to $(x_1,x_2)$ is not tightly transitive, then there is a subgroup  $F$ of $H $ such that $F\mid_{(x_1,x_2)}$ is topologically transitive and $[H\mid_{(x_1,x_2)}: F\mid_{(x_1,x_2)}]=\infty$. We may as well assume that $F\mid_{(x_1,x_2)}$ is tightly transitive. By Lemma \ref{almost minimality of subgroup}, $F$ is almost minimal. Then, by Theorem \ref{classification of R} (2),  $F\mid_{(x_1,x_2)}$ is conjugate to $G_{\alpha, m}$ for some irrational $\alpha$ and $m<n$.

There are two cases:

\indent {\bf Case 1.} $f^k\mid_{(x_1,x_2)}\in F\mid_{(x_1,x_2)}$.  Then $\tilde{F}:=\langle F, f\rangle$ is a topologically transitive subgroup of  $\text{Homeo}_{+}(\mathbb{S}^1)$ and $[\tilde{F}: F]=k$. Hence $[G: \tilde{F}]=\infty$, since $[G: H]=k$ and $[H: F]=[H\mid_{(x_1,x_2)}: F\mid_{(x_1,x_2)}]=\infty$. We get a contradiction to the tight transitivity of $G$.

\indent {\bf Case 2.} $f^k\mid_{(x_1,x_2)}\notin F\mid_{(x_1,x_2)}$.  Then, by Lemma \ref{substitution}, there is a  subgroup  $F'$ of  $H$ such that
 $f^k\in F'$ and the restriction $F'\mid_{(x_1,x_2)}$ is tightly transitive, almost minimal and $F'\cong \mathbb{Z}^m$. Similar to Case 1, we get a contradiction again.
\end{proof}

By Proposition \ref{tightly transitive of stable subgroup} and Theorem \ref{classification of R}, we see that no point in $\mathbb S^1\setminus\{x_1,...,x_k\}$ has a finite $G$-orbit. So, we have
\begin{corollary}\label{uniqueness}
 $\{x_1,...,x_n\}$ is the unique finite $G$-orbit.
\end{corollary}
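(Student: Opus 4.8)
The plan is to establish the sharper statement announced just before the corollary, namely that no point of $\mathbb{S}^1\setminus\{x_1,\ldots,x_k\}$ lies on a finite $G$-orbit; the uniqueness of $\{x_1,\ldots,x_k\}$ is then immediate, since distinct $G$-orbits are disjoint and any finite orbit other than $\{x_1,\ldots,x_k\}$ would necessarily meet $\mathbb{S}^1\setminus\{x_1,\ldots,x_k\}$. First I would reduce to a single arc. Given $y\in(x_i,x_{i+1})$, the homeomorphism $f^{-(i-1)}\in G$ (with $f(x_j)=x_{j+1}$ as in Proposition~\ref{tightly transitive of stable subgroup}) carries $(x_i,x_{i+1})$ onto $(x_1,x_2)$, and since $f^{-(i-1)}\in G$ we have $G\cdot f^{-(i-1)}(y)=Gy$. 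Hence $Gy$ is finite if and only if the orbit of $f^{-(i-1)}(y)\in(x_1,x_2)$ is finite, and it suffices to rule out finite $G$-orbits through points of the open arc $(x_1,x_2)$.

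The core step is to analyze such a point using the stabilizer $H=\{g\in G: g(x_i)=x_i,\ 1\le i\le k\}$. Suppose $y\in(x_1,x_2)$ has $Gy$ finite. Then $Hy\subseteq Gy$ is finite, and because every element of $H$ fixes $x_1$ and $x_2$ and preserves orientation, $H$ preserves the arc $(x_1,x_2)$, so $Hy$ is a finite orbit of the interval action $H|_{(x_1,x_2)}$. Here I would invoke the elementary fact that an orientation-preserving group action on an interval admits no nontrivial finite orbit: each element of $H$ induces an order-preserving permutation of the finite linearly ordered set $Hy$, which must be the identity, so $Hy=\{y\}$ and $y$ is a global fixed point of $H|_{(x_1,x_2)}$. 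By Proposition~\ref{tightly transitive of stable subgroup}, however, $H|_{(x_1,x_2)}$ is tightly transitive on $(x_1,x_2)$; I would derive a contradiction by noting that a global fixed point $y$ would split $(x_1,x_2)$ into two proper $H$-invariant open arcs (orientation preservation again), so that no orbit could be dense — contradicting point transitivity, which coincides with topological transitivity here because $H$ is countable and $(x_1,x_2)$ is Polish without isolated points. Alternatively, one may feed in $H|_{(x_1,x_2)}\cong G_{\alpha,m}$ from Theorem~\ref{classification of R} and observe that $G_{\alpha,m}$ contains the fixed-point-free translation $L_1$, which already forbids a global fixed point.

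Assembling these reductions, no point outside $\{x_1,\ldots,x_k\}$ can lie on a finite $G$-orbit, and the corollary follows at once. The only delicate point is the passage from \emph{finite orbit} to \emph{global fixed point} and thence to a contradiction; everything else is bookkeeping with the index-$k$ decomposition $G=H\cup fH\cup\cdots\cup f^{k-1}H$ furnished by Proposition~\ref{tightly transitive of stable subgroup}. I expect the main obstacle to be largely expository: stating the transport by powers of $f$ cleanly, and phrasing the fixed-point argument so that it applies verbatim to the (possibly lower-rank) restriction $H|_{(x_1,x_2)}$ rather than to $G_{\alpha,n}$ itself.
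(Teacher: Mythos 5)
Your proposal is correct, and its skeleton matches the paper's (very terse) proof: the paper disposes of the corollary in one sentence, observing via Proposition~\ref{tightly transitive of stable subgroup} and Theorem~\ref{classification of R} that no point of $\mathbb{S}^1\setminus\{x_1,\ldots,x_k\}$ has a finite $G$-orbit. Where you differ is in how the key step is justified. The paper routes through the classification theorem: $H\mid_{(x_1,x_2)}$ is conjugate to some $G_{\alpha,m}$, and such groups have no finite orbits on $\mathbb{R}$ (e.g.\ because they contain the fixed-point-free translation $L_1$) --- this is exactly your ``alternative'' argument. Your primary argument is more elementary and self-contained: a finite $H$-invariant subset of the arc must be fixed pointwise, since each $h\in H$ induces an order-preserving permutation of a finite linearly ordered set, and a global fixed point splits $(x_1,x_2)$ into two proper invariant open arcs, contradicting the topological transitivity of $H\mid_{(x_1,x_2)}$ guaranteed by Proposition~\ref{tightly transitive of stable subgroup}. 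This buys you independence from Theorem~\ref{classification of R} for this corollary (only the transitivity part of Proposition~\ref{tightly transitive of stable subgroup} is used), at the cost of a slightly longer write-up; the transport by powers of $f$ is sound bookkeeping, with the harmless remark that when $k=1$ the reduction is vacuous and the relevant arc is $\mathbb{S}^1\setminus\{x_1\}$. One typographical point worth flagging either way: the corollary as printed says $\{x_1,\ldots,x_n\}$ where it should say $\{x_1,\ldots,x_k\}$, as your proof correctly assumes.
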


\begin{theorem}\label{all systems}
Let  $G$ be a tightly transitive and almost minimal subgroup of $\text{Homeo}_{+}(\mathbb{S}^1) $, which is isomorphic to $\mathbb{Z}^n$ for some $n\geq 2$. Then $G$ is topologically conjugate to some $G_{\alpha,n,k,g,f}$.
\end{theorem}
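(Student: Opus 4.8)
The plan is to read off, from the intrinsic structure of $G$, precisely the five parameters used in Section 4, and then to assemble one homeomorphism $\Phi$ of $\mathbb S^1$ that carries $G$ onto the corresponding model group. First I would invoke Proposition \ref{existence of finte orbit} to get a finite orbit $\{x_1,\dots,x_k\}$, listed anticlockwise, and fix $f\in G$ with $f(x_i)=x_{i+1}$; this $f$ and the integer $k$ are the parameters $f,k$. Let $H=\{h\in G: h(x_i)=x_i,\ 1\le i\le k\}$. Proposition \ref{tightly transitive of stable subgroup} gives $G/H\cong\mathbb Z/k\mathbb Z$, so $H$ has index $k$ and hence $H\cong\mathbb Z^n$, and it gives that $H|_{(x_1,x_2)}$ is tightly transitive and almost minimal. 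The first thing to check is that $H$ acts faithfully on $(x_1,x_2)$: since $G$ is abelian, every $h\in H$ commutes with $f$, so if $h|_{(x_1,x_2)}=\mathrm{id}$ then for any $p=f^{i-1}(z)$ with $z\in(x_1,x_2)$ we get $h(p)=f^{i-1}h(z)=p$, whence $h=\mathrm{id}$. Thus $H|_{(x_1,x_2)}\cong\mathbb Z^n$, and Theorem \ref{classification of R}(2) furnishes an orientation preserving homeomorphism $\psi\colon(x_1,x_2)\to\mathbb R$ and an irrational $\alpha\in(0,1)$ with $\psi\,(H|_{(x_1,x_2)})\,\psi^{-1}=G_{\alpha,n}$; this produces $\alpha$ (and $n$).

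The same relation $fh=hf$ is what makes the model a faithful blueprint: for $h\in H$ it gives $h|_{(x_i,x_{i+1})}=f^{i-1}\,(h|_{(x_1,x_2)})\,f^{-(i-1)}$, which is exactly the rule defining $\overline{(\cdot)}^{f}$ in \eqref{extension}. I would then set $g:=\psi\,(f^{k}|_{(x_1,x_2)})\,\psi^{-1}\in G_{\alpha,n}$ (legitimate since $f^{k}\in H$), the last parameter, and fix the model homeomorphism $\phi\colon\mathbb R\to(x_1,x_2)$ together with an $f_0\in\mathrm{Homeo}_+(\mathbb S^1)$ satisfying $f_0(x_i)=x_{i+1}$ and $f_0^{k}|_{(x_1,x_2)}=\tilde g=\phi g\phi^{-1}$ (obtained by prescribing $f_0$ on the first $k-1$ arcs and solving for it on the last).

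The conjugacy itself I would define by $\Phi(x_i)=x_i$, $\Phi|_{(x_1,x_2)}=\phi\psi$, and $\Phi|_{(x_i,x_{i+1})}=f_0^{\,i-1}(\phi\psi)\,f^{-(i-1)}$ for $i=2,\dots,k$. Each piece is an orientation preserving homeomorphism onto the appropriate arc sending endpoints to the marked points, so $\Phi$ is an orientation preserving homeomorphism of $\mathbb S^1$. For $i\le k-1$ the identity $\Phi f=f_0\Phi$ holds on $(x_i,x_{i+1})$ by the very definition of the next piece, and $\Phi h\Phi^{-1}=\overline{\psi(h|_{(x_1,x_2)})\psi^{-1}}$ holds on $(x_1,x_2)$, hence on every arc by transporting with $f$ and $f_0$. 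The hard part will be the single ``wrap-around'' verification that $\Phi f=f_0\Phi$ also on $(x_k,x_1)$. For $y$ in that arc, set $z=f^{-(k-1)}(y)\in(x_1,x_2)$; then $f(y)=f^{k}(z)=(f^{k}|_{(x_1,x_2)})(z)$, so on one side $\Phi(f(y))=\phi\,\psi(f^{k}|_{(x_1,x_2)})(z)=\phi\,g\,\psi(z)$, while on the other side $f_0\Phi(y)=f_0^{\,k}\phi\psi(z)=\phi\,g\,\psi(z)$ because $f_0^{\,k}|_{(x_1,x_2)}=\phi g\phi^{-1}$. The two defining constraints $g=\psi(f^{k}|_{(x_1,x_2)})\psi^{-1}$ and $f_0^{\,k}|_{(x_1,x_2)}=\tilde g$ are precisely what make these two expressions agree, so the intertwining closes up; this compatibility at the last arc is the crux.

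Finally, since $\Phi$ conjugates $f$ to $f_0$ and each $h\in H$ to $\overline{\psi(h|_{(x_1,x_2)})\psi^{-1}}$, and $G=\bigcup_{j=0}^{k-1}f^{j}H$, the image $\Phi G\Phi^{-1}$ is generated by $\{\overline\sigma:\sigma\in G_{\alpha,n}\}\cup\{f_0\}$, i.e.\ it is the group assembled in Section 4 from $(\alpha,n,k,g,f_0)$. To see this is a genuine instance of the construction it remains to note that $g$ lies in $G_{\alpha,n}\setminus\bigcup_{\gcd(k,s)\ne1}G_{\alpha,n}^{s}$: otherwise Remark \ref{torsion} would exhibit a nontrivial torsion element, contradicting $\Phi G\Phi^{-1}\cong G\cong\mathbb Z^n$. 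Hence $\Phi G\Phi^{-1}=G_{\alpha,n,k,g,f_0}$, as desired.
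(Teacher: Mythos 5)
Your proposal is correct and takes essentially the same approach as the paper: extract the finite orbit $\{x_1,\dots,x_k\}$, the index-$k$ stabilizer $H$, an irrational $\alpha$ and a conjugacy $\psi$ of $H|_{(x_1,x_2)}$ onto $G_{\alpha,n}$ via Theorem \ref{classification of R}, set $g=\psi(f^k|_{(x_1,x_2)})\psi^{-1}$, extend the conjugation arc by arc using powers of $f$, and rule out the excluded $g$'s by Remark \ref{torsion} and torsion-freeness. If anything, you are more careful than the paper at two points it passes over quickly --- the faithfulness of the $H$-action on $(x_1,x_2)$, and the wrap-around arc $(x_k,x_1)$, where the paper's assertion that $\overline{\psi}f\overline{\psi}^{-1}=f$ holds literally only when $\psi$ commutes with $\phi^{-1}f^k|_{(x_1,x_2)}\phi$, a subtlety your explicit model map $f_0$ with $f_0^k|_{(x_1,x_2)}=\tilde{g}$ and the verified identity $\Phi f=f_0\Phi$ resolve cleanly (the only cosmetic omission being the harmless normalization $x_j=e^{\mathrm{i}2\pi j/k}$ required by the Section 4 setup).
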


\begin{proof}
By Propostion \ref{existence of finte orbit}, there exists a finite $G$-orbit  $x_1,\cdots,x_k$ which lie on $\mathbb S^1$ in the anticlockwise ordering.
WLOG, we may assume  $x_j=e^{{\rm\bf i}2\pi j/k}$ for $j=1,...,k$ as in Section 4, otherwise we need only replace $G$ by some $G'$ conjugating to it.
 Let
\begin{displaymath}
H=\{g\in G:  g(x_i)=x_i,  1\leq i\leq k\}.
\end{displaymath}
Then $H\mid_{(x_1,x_2)}$ is tightly transitive, almost minimal and isomorphic to $\mathbb{Z}^n$ by Propostion \ref{tightly transitive of stable subgroup}. Therefore, by Theorem \ref{classification of R}, there exists an irrational $\alpha\in (0,1)$ such that $H\mid_{(x_1,x_2)}$ is conjugate to $\widetilde{G_{\alpha,n}}$. Precisely, let $\phi\in \text{Homeo}_{+}(\mathbb{R},(x_1,x_2))$ be as in the first paragraph of Section 4. Then there exists a $\psi\in \text{Homeo}_{+}(\mathbb{R})$ such that
\begin{displaymath}
 \psi \phi^{-1} H\mid_{(x_1,x_2)}\phi\psi^{-1}=G_{\alpha,n}.
\end{displaymath}
Let $f\in G$ be such that $f(x_1)=x_2$. Then $f^{k}\mid_{(x_1,x_2)}\in H\mid_{(x_1,x_2)}$. Let $g=\psi\phi^{-1} f^k\mid_{(x_1,x_2)}\phi\psi^{-1}\in G_{\alpha,n}$. By Remark \ref{torsion}, we have $g\notin \bigcup_{(s,k)\neq 1}G_{\alpha,n}^s$, since $G$ is torsion-free. Next we show that
\begin{displaymath}
\overline{\psi} G \overline{\psi}^{-1}=G_{\alpha,n,k,g,f}.
\end{displaymath}
Note that
\begin{equation*}
\overline{\psi} (x)=\left\{
\begin{array}{cl}
f^{(i-1)}(\phi\psi\phi^{-1})f^{-(i-1)}(x),& x\in (x_i,x_{i+1}),\\
x_{i},& x=x_i,
\end{array}
\right.
\end{equation*}
and $G=H\cup f H\cup\cdots\cup f^{k-1} H$.   For $x\in (x_i,x_{i+1})$ and $h\in H$,
 \begin{eqnarray*}
 &&\overline{\psi}h \overline{\psi}^{-1}(x)\\
  &=& \left[f^{(i-1)}(\phi\psi\phi^{-1})f^{-(i-1)}] h [f^{(i-1)}(\phi\psi^{-1}\phi^{-1})f^{-(i-1)}\right](x)\\
 &=&f^{(i-1)}(\phi\psi\phi^{-1})h\mid_{(x_1,x_2)}(\phi\psi^{-1}\phi^{-1})f^{-(i-1)}(x)\\
 &=&\overline{\psi\phi^{-1} h\mid_{(x_1,x_2)}\phi\psi^{-1}}(x)
 \end{eqnarray*}
Since $\overline{\psi}h \overline{\psi}^{-1}(x_i)=x_i$, we conclude that $\overline{\psi}h \overline{\psi}^{-1}\in G_{\alpha,n,k,g,f}$. It is clear that $\overline{\psi}f\overline{\psi}^{-1}=f$. Thus $\overline{\psi} G \overline{\psi}^{-1}\subseteq G_{\alpha,n,k,g,f}$.  It is similar for the converse direction. Thus
\begin{displaymath}
\overline{\psi} G \overline{\psi}^{-1}=G_{\alpha,n,k,g,f},
\end{displaymath}
which means that $G$ is topologically conjugate to some $G_{\alpha,n,k,g,f}$.
\end{proof}

\section{Classification of $G_{\alpha,n,k,g,f}$}
Theorem \ref{all systems} indicates that,  in order to determine all the conjugation classes of the concerned systems,
we need only classify the groups $G_{\alpha,n,k,g,f}$ defined in Section 4.

\begin{lemma}\label{disregarding f}
Let $n,k\in\mathbb{Z}$ with $n\geq2$ and $k\geq 1$, $\alpha$ be an irrational in $(0,1)$ and $g\in G_{\alpha, n}$. Then, for any $f,f'\in \text{Homeo}_{+}(\mathbb{S}^1)_{k,g}$, $G_{\alpha,n,k,g,f}$  is topologically conjugate to $G_{\alpha,n,k,g,f'}$.
\end{lemma}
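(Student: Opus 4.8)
The plan is to produce a single orientation preserving homeomorphism $\Phi$ of $\mathbb{S}^1$ that conjugates the generating set of $G_{\alpha,n,k,g,f}$ onto that of $G_{\alpha,n,k,g,f'}$; concretely I will arrange $\Phi f\Phi^{-1}=f'$ and $\Phi\overline{\sigma}^f\Phi^{-1}=\overline{\sigma}^{f'}$ for every $\sigma\in G_{\alpha,n}$, which immediately gives $\Phi G_{\alpha,n,k,g,f}\Phi^{-1}=G_{\alpha,n,k,g,f'}$. The observation guiding the construction is that on the base interval $(x_1,x_2)$ both groups look identical: $\overline{\sigma}^f\mid_{(x_1,x_2)}=\tilde{\sigma}=\overline{\sigma}^{f'}\mid_{(x_1,x_2)}$, so I should take $\Phi$ to be the identity on $(x_1,x_2)$ and let the requirement $\Phi f=f'\Phi$ propagate the definition to the remaining intervals. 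Setting $\Phi(x_i)=x_i$ and
\[
\Phi\mid_{(x_i,x_{i+1})}=(f')^{i-1}f^{-(i-1)},\qquad i=1,\dots,k,
\]
each restriction carries $[x_i,x_{i+1}]$ homeomorphically onto itself fixing the endpoints (since $f^{-(i-1)}$ sends $[x_i,x_{i+1}]$ to $[x_1,x_2]$ and $(f')^{i-1}$ sends it back), so $\Phi$ is a well defined orientation preserving homeomorphism of $\mathbb{S}^1$ fixing each $x_i$.

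Next I would verify the two conjugation identities. For the relation $\Phi f=f'\Phi$, a direct computation on $(x_i,x_{i+1})$ with $i\leq k-1$ is automatic from the recursive shape of the formula, since both sides equal $(f')^{i}f^{-(i-1)}$ there. The only nontrivial case is the wrap-around interval $(x_k,x_1)$: for $y$ there one has $f(y)\in(x_1,x_2)$, so $\Phi(f(y))=f(y)$, while $f'(\Phi(y))=(f')^{k}f^{-(k-1)}(y)$; writing $z=f^{-(k-1)}(y)\in(x_1,x_2)$ the required equality becomes $f^k(z)=(f')^k(z)$. This is exactly where the defining property of $\text{Homeo}_{+}(\mathbb{S}^1)_{k,g}$ enters, since $f^k\mid_{(x_1,x_2)}=\tilde{g}=(f')^k\mid_{(x_1,x_2)}$. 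For the relation $\Phi\overline{\sigma}^f\Phi^{-1}=\overline{\sigma}^{f'}$, I would compute on each $(x_i,x_{i+1})$ directly: using $\overline{\sigma}^f\mid_{(x_i,x_{i+1})}=f^{i-1}\tilde{\sigma}f^{-(i-1)}$ together with the explicit formulas for $\Phi$ and $\Phi^{-1}$, the inner powers of $f$ telescope and leave $(f')^{i-1}\tilde{\sigma}(f')^{-(i-1)}=\overline{\sigma}^{f'}\mid_{(x_i,x_{i+1})}$, while both sides fix $x_i$.

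The main obstacle is precisely this closing-up condition around the circle: the formula for $\Phi$ is forced on each interval once it is fixed on $(x_1,x_2)$, but there is no a priori reason the propagated map should be compatible with $f$ on the last interval. The content of the lemma is that compatibility holds for the single reason that $f$ and $f'$ induce the same $k$th-power map $\tilde{g}$ on $(x_1,x_2)$; everything else is bookkeeping with the explicit formulas. Once both identities are established, the generators of $G_{\alpha,n,k,g,f}$ map under $\Phi(\cdot)\Phi^{-1}$ to the generators of $G_{\alpha,n,k,g,f'}$, yielding the asserted topological conjugacy.
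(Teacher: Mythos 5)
Your proposal is correct and takes essentially the same approach as the paper: the conjugating map $\Phi$ you build piecewise, with $\Phi\mid_{(x_i,x_{i+1})}=(f')^{i-1}f^{-(i-1)}$ and $\Phi(x_i)=x_i$, is exactly the inverse of the paper's map $\psi$ (equivalently, $\psi$ with the roles of $f$ and $f'$ swapped), and your two verifications coincide with the paper's, including the key wrap-around step on $(x_k,x_1)$ where $f^k\mid_{(x_1,x_2)}=\tilde{g}=(f')^k\mid_{(x_1,x_2)}$ is what closes the circle.
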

\begin{proof}
Define $\psi\in \text{Homeo}_{+}(\mathbb{S}^1)$ by
\begin{equation*}
\psi(x)=\left\{
\begin{array}{cll}
f^{(i-1)}f'^{-(i-1)}(x),& x\in (x_i,x_{i+1}),& i=1,...,k;\\
x,& x=x_i,& i=1,...,k.
\end{array}
\right.
\end{equation*}
For any $\sigma\in G_{\alpha,n}$, recall that  $\overline{\sigma}^f\in G_{\alpha,n,k,g,f}$ and $\overline{\sigma}^{f'}\in G_{\alpha,n,k,g,f'}$ are defined by (\ref{extension}).
So, for $x\in (x_i,x_{i+1})$,
\begin{eqnarray*}
\psi\overline{\sigma}^{f'}\psi^{-1}(x)&=& f^{(i-1)} f'^{-(i-1)} f'^{(i-1)}\tilde{\sigma}f'^{-(i-1)}f'^{(i-1)}f^{-(i-1)}(x)\\
&=& f^{(i-1)}\tilde{\sigma}f^{-(i-1)}(x)\\
&=& \overline{\sigma}^f(x).
\end{eqnarray*}
It is obvious that $ \psi\overline{\sigma}^{f'}\psi^{-1}(x_i)=\overline{\sigma}^{f'}(x_i)=x_i$. Hence $\psi\overline{\sigma}^{f'}\psi^{-1}=\overline{\sigma}^f$.

In addition, if $x\in (x_i,x_{i+1})$ with $1\leq i\leq k-1,$ then
$$\psi f'(x)=f^if'^{-i}f'(x)=f^if'^{-i+1}(x)=ff^{i-1}f'^{-(i-1)}(x)=f\psi(x);$$
if $x\in (x_{k},x_{1})$, then
$$\psi f'(x)=f'^kf'^{-(k-1)}(x)=\tilde{g}f'^{-(k-1)}(x)=f^kf'^{-(k-1)}(x)=f\psi(x).$$

Altogether, we have
\begin{displaymath}
\psi G_{\alpha,n,k,g,f'}\psi^{-1}=G_{\alpha,n,k,g,f}.
\end{displaymath}
That is to say that $G_{\alpha,n,k,g,f}$  is topologically conjugate to $G_{\alpha,n,k,g,f'}$.
\end{proof}

\begin{lemma}\label{conj with g}
If $G_{\alpha,n,k,g,f}$ is topologically conjugate to $G_{\alpha',n',k',g',f'}$, then $k=k', n=n'$ and $\alpha$ is equivalent to $\alpha'$.
\end{lemma}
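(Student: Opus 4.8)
The plan is to read off the three invariants $n$, $k$, and the equivalence class of $\alpha$ one at a time from a conjugating map $\Phi\in\text{Homeo}_{+}(\mathbb{S}^1)$ satisfying $\Phi\, G_{\alpha,n,k,g,f}\,\Phi^{-1}=G_{\alpha',n',k',g',f'}$. The equality $n=n'$ is immediate: $\Phi$ induces a group isomorphism between the two groups, which are free abelian, so $\mathbb{Z}^n\cong\mathbb{Z}^{n'}$ forces $n=n'$.

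For $k=k'$, the key point is that $k$ is encoded topologically as the cardinality of the unique finite orbit. For any $x$ one has $\Phi(G_{\alpha,n,k,g,f}\,x)=G_{\alpha',n',k',g',f'}\,\Phi(x)$, so $\Phi$ carries orbits bijectively onto orbits and in particular preserves their cardinalities. By construction $\{x_1,\dots,x_k\}$ is a finite orbit of $G_{\alpha,n,k,g,f}$ of cardinality $k$, and by Corollary \ref{uniqueness} it is the only finite orbit; the same holds for $G_{\alpha',n',k',g',f'}$ with its orbit of cardinality $k'$. Hence $\Phi$ must send the former finite orbit onto the latter, giving $k=k'$.

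For the equivalence of $\alpha$ and $\alpha'$, I would descend the conjugacy to a single interval and invoke Theorem \ref{classification of R}(1). Writing $\{x_1,\dots,x_k\}$ and $\{x_1',\dots,x_k'\}$ for the two finite orbits in anticlockwise order, $\Phi$ is orientation preserving and carries the first onto the second respecting the cyclic order, so $\Phi(x_i)=x_{i+c}'$ for a fixed $c$ (indices mod $k$) and $\Phi$ maps the interval $(x_1,x_2)$ homeomorphically and orientation-preservingly onto $(x_{1+c}',x_{2+c}')$. Let $H$ and $H'$ be the pointwise stabilizers of the two finite orbits; since $\Phi(x_i)=x_{i+c}'$ we get $\Phi H\Phi^{-1}=H'$, and hence $\Phi|_{(x_1,x_2)}$ conjugates $H|_{(x_1,x_2)}$ onto $H'|_{(x_{1+c}',x_{2+c}')}$. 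Now $H|_{(x_1,x_2)}=\widetilde{G_{\alpha,n}}$ is conjugate to $G_{\alpha,n}$ by the orientation preserving chart $\phi$, while $H'|_{(x_{1+c}',x_{2+c}')}$ is conjugate to $G_{\alpha',n}$ via $f'^{c}$ followed by the chart $\phi'$ (the power $f'^{c}$ identifies $H'|_{(x_1',x_2')}=\widetilde{G_{\alpha',n}}$ with $H'|_{(x_{1+c}',x_{2+c}')}$, all orientation preserving). Composing these three orientation preserving homeomorphisms yields an orientation preserving $\Theta\in\text{Homeo}_{+}(\mathbb{R})$ with $\Theta\,G_{\alpha,n}\,\Theta^{-1}=G_{\alpha',n}$; Theorem \ref{classification of R}(1) then gives that $\alpha$ is equivalent to $\alpha'$.

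The step I expect to require the most care is this last one: one must track that \emph{every} homeomorphism entering the composition $\Theta$ is orientation preserving on $\mathbb{R}$, since Theorem \ref{classification of R}(1) supplies the equivalence of $\alpha$ and $\alpha'$ only for orientation preserving conjugacies. This is why it matters both that $\Phi\in\text{Homeo}_{+}(\mathbb{S}^1)$ preserves the cyclic order of the finite orbit (so that intervals map to intervals in an order preserving way) and that the identification of $H'$ across different intervals is implemented by powers of $f'$, which are orientation preserving. Once orientation preservation is secured throughout, the reduction to Theorem \ref{classification of R}(1) is routine.
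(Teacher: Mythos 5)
Your proposal is correct and takes essentially the same route as the paper's (much terser) proof: $n=n'$ from the isomorphism type, $k=k'$ from the cardinality of the finite orbit (the paper cites Proposition \ref{homeo with periodic orbit} where you cite Corollary \ref{uniqueness}, an inessential difference), and the equivalence of $\alpha$ and $\alpha'$ by transporting the conjugacy, restricted to the stabilizer of the finite orbit on one interval, back to $\text{Homeo}_{+}(\mathbb{R})$ and invoking Theorem \ref{classification of R}(1). Your bookkeeping with the power $f'^{c}$ and the charts, including the check that every map in the composition is orientation preserving, correctly supplies the details the paper leaves implicit.
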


\begin{proof}

$n=n'$ is clear; $k=k'$ follows from the fact that all finite orbits of a group of circle homeomorphisms have the same cardinality (Lemma \ref{homeo with periodic orbit}); $\alpha$ being equivalent to $\alpha'$ follows from Theorem \ref{classification of R} and
the definition of $G_{\alpha,n,k,g,f}$.
\end{proof}

Let $N_{\text{Homeo}_{+}(\mathbb{R})}(G_{\alpha,n })$ denote the normalizer of $G_{\alpha,n}$ in $\text{Homeo}_{+}(\mathbb{R}) $, i.e.,
$N_{\text{Homeo}_{+}(\mathbb{R})}(G_{\alpha,n })=\left\{\varphi\in \text{Homeo}_{+}(\mathbb{R}):~\varphi G_{\alpha,n}\varphi^{-1}=G_{\alpha,n}\right\}.$
 Thus we get an affine action on $G_{\alpha,n}$ by the semidirect $N_{\text{Homeo}_{+}(\mathbb{R})}(G_{\alpha,n })\ltimes G_{\alpha,n}^k$: $(\varphi, f). g:=\varphi g\varphi^{-1}f$, for any $(\varphi, f)\in N_{\text{Homeo}_{+}(\mathbb{R})}(G_{\alpha,n })\ltimes G_{\alpha,n}^k$ and $g\in G_{\alpha,n}$.

\begin{lemma}
 $G_{\alpha,n,k,g,f}$ is topologically conjugate to $G_{\alpha,n,k,g',f'}$ if and only if $g$ and $g'$ are in the same orbit of the affine action on $G_{\alpha,n}$ by $N_{\text{Homeo}_{+}(\mathbb{R})}(G_{\alpha,n })\ltimes G_{\alpha,n}^k$.
\end{lemma}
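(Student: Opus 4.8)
The plan is to prove both implications by exploiting that, by Corollary~\ref{uniqueness}, the common finite orbit $\{x_1,\dots,x_k\}$ is an intrinsic invariant of each group, and that the two algebraic factors of $N_{\text{Homeo}_{+}(\mathbb{R})}(G_{\alpha,n})\ltimes G_{\alpha,n}^k$ correspond to two distinct geometric operations on the defining data $(g,f)$. Note that $\alpha,n,k$ are the same on both sides, so the normalizer $\varphi$ we produce will genuinely lie in $N_{\text{Homeo}_{+}(\mathbb{R})}(G_{\alpha,n})$.

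For the ``if'' direction I would handle the two factors separately. The $G_{\alpha,n}^k$-move $g\mapsto gw^k$ I would realize \emph{without conjugating at all}: replacing the transversal $f$ by $f':=f\,\overline{w}^{\,f}$, I would check that $f'\in\text{Homeo}_{+}(\mathbb{S}^1)_{k,gw^k}$, using that $f$ commutes with $\overline{w}^{\,f}$ because $G_{\alpha,n}$ is abelian, so that $(f\,\overline w^{\,f})^k|_{(x_1,x_2)}=\widetilde{gw^k}$; and, crucially, that $\overline\sigma^{f'}=\overline\sigma^{f}$ for every $\sigma$, since commutativity makes the conjugation of $\tilde\sigma$ by $\widetilde{w^{\,i-1}}$ trivial on each arc. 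Hence $G_{\alpha,n,k,g,f}=G_{\alpha,n,k,gw^k,f'}$ is \emph{literally the same} subgroup. The normalizer move $g\mapsto \varphi g\varphi^{-1}$ I would realize by genuine conjugation: set $\Phi:=\overline\varphi^{\,f}$ and compute arc by arc that $\Phi\,\overline\sigma^{f}\,\Phi^{-1}=\overline{\varphi\sigma\varphi^{-1}}^{\,f}$, so that $\Phi$ sends $H=\{\overline\sigma^{f}\}$ onto itself, while $\Phi f\Phi^{-1}=:f'$ satisfies $(f')^k|_{(x_1,x_2)}=\widetilde{\varphi g\varphi^{-1}}$. Composing the two moves and invoking Lemma~\ref{disregarding f} to absorb the change of transversal, I obtain $G_{\alpha,n,k,g,f}$ conjugate to $G_{\alpha,n,k,\varphi g\varphi^{-1}w^k,f'}$ for every admissible $f'$, which is exactly the statement.

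For the ``only if'' direction, let $\Psi$ conjugate $G_{\alpha,n,k,g,f}$ to $G_{\alpha,n,k,g',f'}$. Both groups have $\{x_1,\dots,x_k\}$ as their unique finite orbit, so $\Psi$ preserves this set and, being orientation preserving, rotates its cyclic order; replacing $\Psi$ by $(f')^{-c}\Psi$ for the appropriate $c$ I may assume $\Psi(x_i)=x_i$ for all $i$. Then $\Psi$ carries the stabilizer $H$ to $H'$, and since $H|_{(x_1,x_2)}=H'|_{(x_1,x_2)}=\widetilde{G_{\alpha,n}}$, the map $\varphi:=\phi^{-1}(\Psi|_{(x_1,x_2)})\phi$ normalizes $G_{\alpha,n}$, i.e. $\varphi\in N_{\text{Homeo}_{+}(\mathbb{R})}(G_{\alpha,n})$. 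To extract the relation between $g$ and $g'$ I would track $f^k=\overline g^{\,f}\in H$: on one hand $\Psi f^k\Psi^{-1}|_{(x_1,x_2)}=\widetilde{\varphi g\varphi^{-1}}$; on the other hand $\Psi f\Psi^{-1}$ fixes the $x_i$'s up to the shift $x_1\mapsto x_2$, so it lies in the coset $f'H'$, say $\Psi f\Psi^{-1}=f'\overline\rho^{\,f'}$, whence $\Psi f^k\Psi^{-1}=(f'\overline\rho^{\,f'})^k=\overline{g'\rho^k}^{\,f'}$ by commutativity. Comparing the two expressions through the injective homomorphism $\tau\mapsto\overline\tau^{\,f'}$ yields $g'\rho^{k}=\varphi g\varphi^{-1}$, that is $g'=\varphi g\varphi^{-1}(\rho^{-1})^{k}$, so $g$ and $g'$ lie in the same $N_{\text{Homeo}_{+}(\mathbb{R})}(G_{\alpha,n})\ltimes G_{\alpha,n}^{k}$-orbit.

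The main obstacle is the bookkeeping in the ``if'' direction: one must check that conjugation by $\Phi$ lands in a \emph{standard} model and not merely in some conjugate of one. Concretely, $\Phi H\Phi^{-1}$ must coincide with the standard stabilizer $\{\overline\tau^{\,f'}\}$ built from the new transversal $f'=\Phi f\Phi^{-1}$. The decisive observation is that $f'$ agrees with $f$ on every arc except $(x_k,x_1)$, and that the conjugations $(f')^{\pm(i-1)}$ occurring in $\overline\tau^{\,f'}$ over $(x_i,x_{i+1})$ never traverse that exceptional arc, so that $\overline\tau^{\,f'}=\overline\tau^{\,f}$ for all $\tau$. I expect this interval-by-interval verification, together with keeping careful track of which transversal element defines each extension $\overline{(\cdot)}$, to be the most delicate part of the argument.
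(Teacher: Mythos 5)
Your proposal is correct and takes essentially the same route as the paper's proof: the same two moves (the $G_{\alpha,n}^k$-part realized as a change of transversal $f\mapsto f\,\overline{w}^{\,f}$ giving literally the same group, and the normalizer part realized by conjugation with the extension $\overline{\varphi^{\pm1}}^{\,f}$), combined via Lemma~\ref{disregarding f}, together with the same necessity argument (fix the unique finite orbit, adjust $\Psi$ by a power of the transversal, write $\Psi f\Psi^{-1}$ in the coset $f'H'$, and compare $k$-th powers restricted to $(x_1,x_2)$ to get $\varphi g h^k\varphi^{-1}=g'$ with $\varphi=\phi^{-1}\Psi|_{(x_1,x_2)}\phi$ normalizing $G_{\alpha,n}$). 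Your interval-by-interval check that $\overline{\tau}^{\,f'}=\overline{\tau}^{\,f}$ is just an equivalent bookkeeping of the paper's direct identity $\overline{\sigma}^{f'}=\Phi\,\overline{(\varphi\sigma\varphi^{-1})}^{f}\,\Phi^{-1}$, so no substantive difference remains.
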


\begin{proof}
\emph{Sufficiency}. Suppose that $g$ and $g'$ are in the same orbit of the affine action on $G_{\alpha,n}$ by $N_{\text{Homeo}_{+}(\mathbb{R})}(G_{\alpha,n })\ltimes G_{\alpha,n}^k$. Then there exist some $\varphi\in N_{\text{Homeo}_{+}(\mathbb{R})}(G_{\alpha,n })$ and $h_0\in G_{\alpha,n}$ such that
\[
g'=\varphi g\varphi^{-1}h_0^k=\varphi g(\varphi^{-1} h_0^k\varphi)\varphi^{-1}.
\]
Since $\varphi\in N_{\text{Homeo}_{+}(\mathbb{R})}(G_{\alpha,n })$, $\varphi^{-1} h_0^k\varphi\in G_{\alpha,n }$. Set $h=\varphi^{-1} h_0\varphi$. Thus $g'=\varphi g h^k \varphi^{-1}$.\\

Let $\{x_1,...,x_k\}$ and $\phi: \mathbb{R} \rightarrow (x_1,x_2)$ be defined as in Section 4. Then $\phi\varphi\phi^{-1}\in\text{Homeo}_{+}((x_1,x_2))$.\\

 We show two special cases  firstly.\\

\noindent\textbf{Claim 1}. If $g'=gh^k$, then $G_{\alpha,n,k,g,f}=G_{\alpha, n,k,g', f''}$ with $f''=f\overline{h}^{f}$, where the definition of $\overline{h}^f$ can consult (\ref{extension}). Thus $G_{\alpha,n,k,g',f'}$ is conjugate to $G_{\alpha, n,k,g', f''}$ by Lemma \ref{disregarding f}. \\

Indeed, let $\sigma$ be in $G_{\alpha,n}$. For any $x\in (x_i,x_{i+1}), 1\leq i\leq k$,
\begin{eqnarray*}
\overline{\sigma}^{f''}(x)&=&f^{(i-1)}(\overline{h}^{f})^{(i-1)}\tilde{\sigma}(\overline{h}^{f})^{-(i-1)}f^{-(i-1)}(x)\\
&=& f^{(i-1)}\tilde{h}^{(i-1)}\tilde{\sigma}\tilde{h}^{-(i-1)}f^{-(i-1)}(x)\\
&=& f^{(i-1)}\tilde{\sigma}f^{-(i-1)}(x)\\
&=&\overline{\sigma}^f(x).
\end{eqnarray*}
It is obvious that $\overline{\sigma}^{f''}(x_i)=x_i=\overline{\sigma}^{f}(x_i) $. Thus $\overline{\sigma}^{f''}=\overline{\sigma}^{f}$.
Note that
\begin{displaymath}
G_{\alpha,n,k,g,f}=\langle\{\overline{\sigma}^f: \sigma\in G_{\alpha,n}\}\cup\{f\}\rangle,
\end{displaymath}
and
\begin{displaymath}
G_{\alpha,n,k,g',f''}=\langle\{\overline{\sigma}^{f''}: \sigma\in G_{\alpha,n}\}\cup\{f''\}\rangle.
\end{displaymath}
In addition, $f''=f\overline{h}^f\in G_{\alpha,n,k,g,f}$ and $f=f^{''}(\overline{h}^{f})^{-1}=f^{''}(\overline{h}^{f^{''}})^{-1}\in G_{\alpha,n,k,g',f''}$ . Therefore,
\begin{displaymath}
G_{\alpha,n,k,g,f}=G_{\alpha, n,k,gh^k, f''}.
\end{displaymath}

\noindent\textbf{Claim 2}. If $g'=\varphi g\varphi^{-1}$ with $\varphi\in N_{\text{Homeo}_{+}(\mathbb{R})}(G_{\alpha,n })$, then $G_{\alpha,n,k,g,f}$ is conjugate to $G_{\alpha, n,k,g', f'}$ by $\Phi:=\overline{\varphi^{-1}}^{f}$, where  $f'=\Phi f \Phi^{-1}\in\text{Homeo}_{+}(\mathbb{S}^1)$.\\

Indeed, let $\sigma\in G_{\alpha,n}$. For $x\in(x_i,x_{i+1})$ with $1\leq i\leq k$,
\begin{eqnarray*}
\overline{\sigma}^{f'}(x)&=& f'^{(i-1)} \tilde{\sigma} f'^{-(i-1)}(x)\\
&=&\Phi f^{(i-1)} \Phi^{-1}\tilde{\sigma}\Phi f^{-(i-1)} \Phi^{-1}(x)\\
&=&\Phi f^{(i-1)} \widetilde{\varphi \sigma\varphi^{-1}} f^{-(i-1)} \Phi^{-1}(x)\\
&=&\Phi  \overline{(\varphi \sigma\varphi^{-1})}^f  \Phi^{-1}(x).
\end{eqnarray*}
It is obvious that $\overline{\sigma}^{f'}(x_i) =\Phi  \overline{(\varphi \sigma\varphi^{-1})}^f  \Phi^{-1}(x_i)=x_i$ for any $1\leq i\leq k$. Hence
\begin{displaymath}
\overline{\sigma}^{f'}= \Phi  \overline{(\varphi \sigma\varphi^{-1})}^f  \Phi^{-1}.
\end{displaymath}
Since $\varphi\in N_{\text{Homeo}_{+}}(\mathbb{R})(G_{\alpha,n })$, we have $\overline{(\varphi \sigma\varphi^{-1})}^f\in G_{\alpha,n,k,g,f}$. Thus
\begin{displaymath}
G_{\alpha,n,k,g,f}=\langle\{\overline{(\varphi\sigma\varphi^{-1})}^f: \sigma\in G_{\alpha,n}\}\cup\{f\}\rangle,
\end{displaymath}
Hence $G_{\alpha,n,k,g',f'}=\Phi G_{\alpha,n,k,g,f}\Phi^{-1} $.\\
\begin{figure}[htbp]
\centering
\subfigure[Claim1]
{
\begin{minipage}{5cm}
\centering
\includegraphics[scale=0.5]{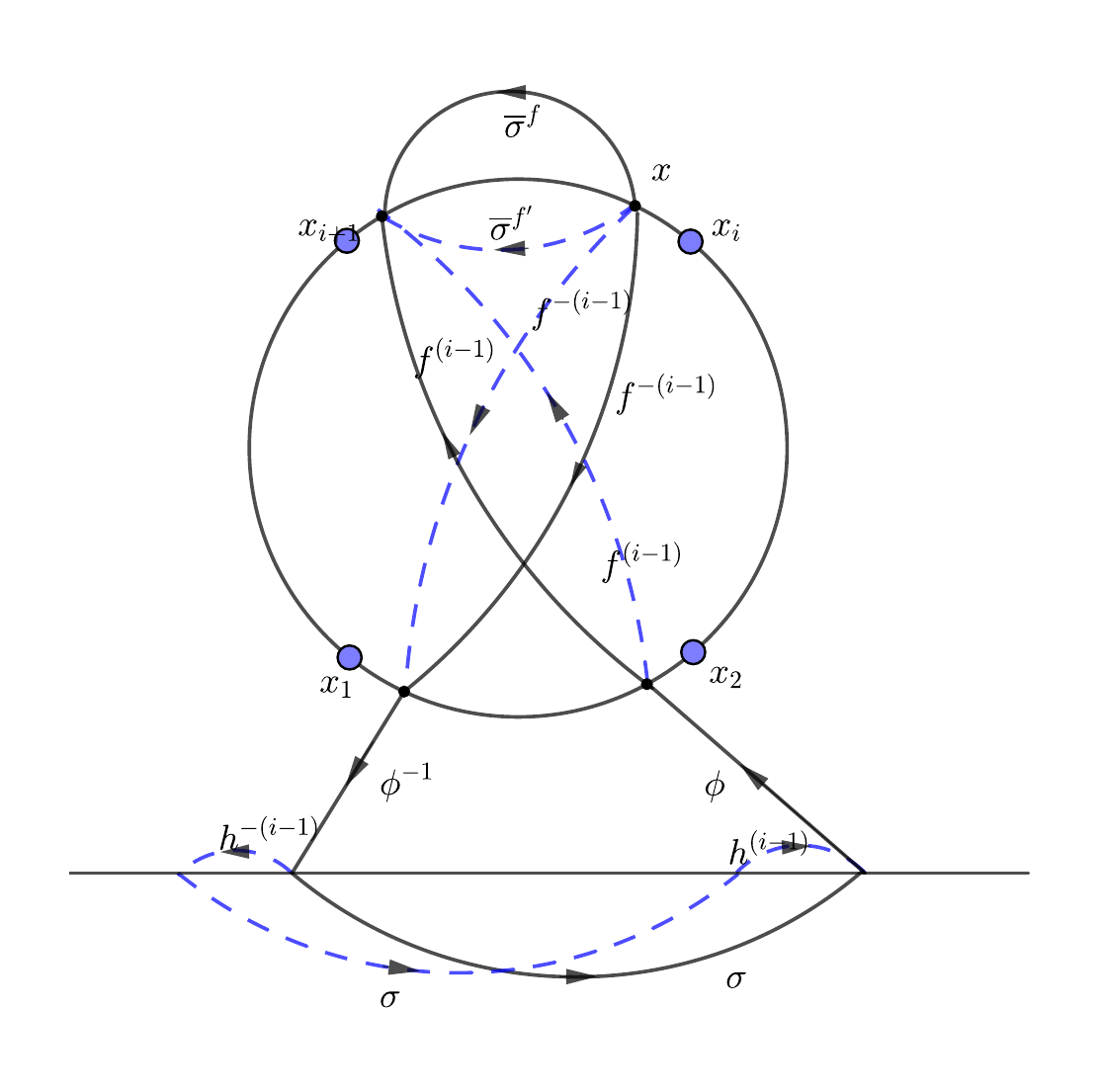}
\end{minipage}
}
\subfigure[Claim2]
{
\begin{minipage}{5cm}
\centering
\includegraphics[scale=0.5]{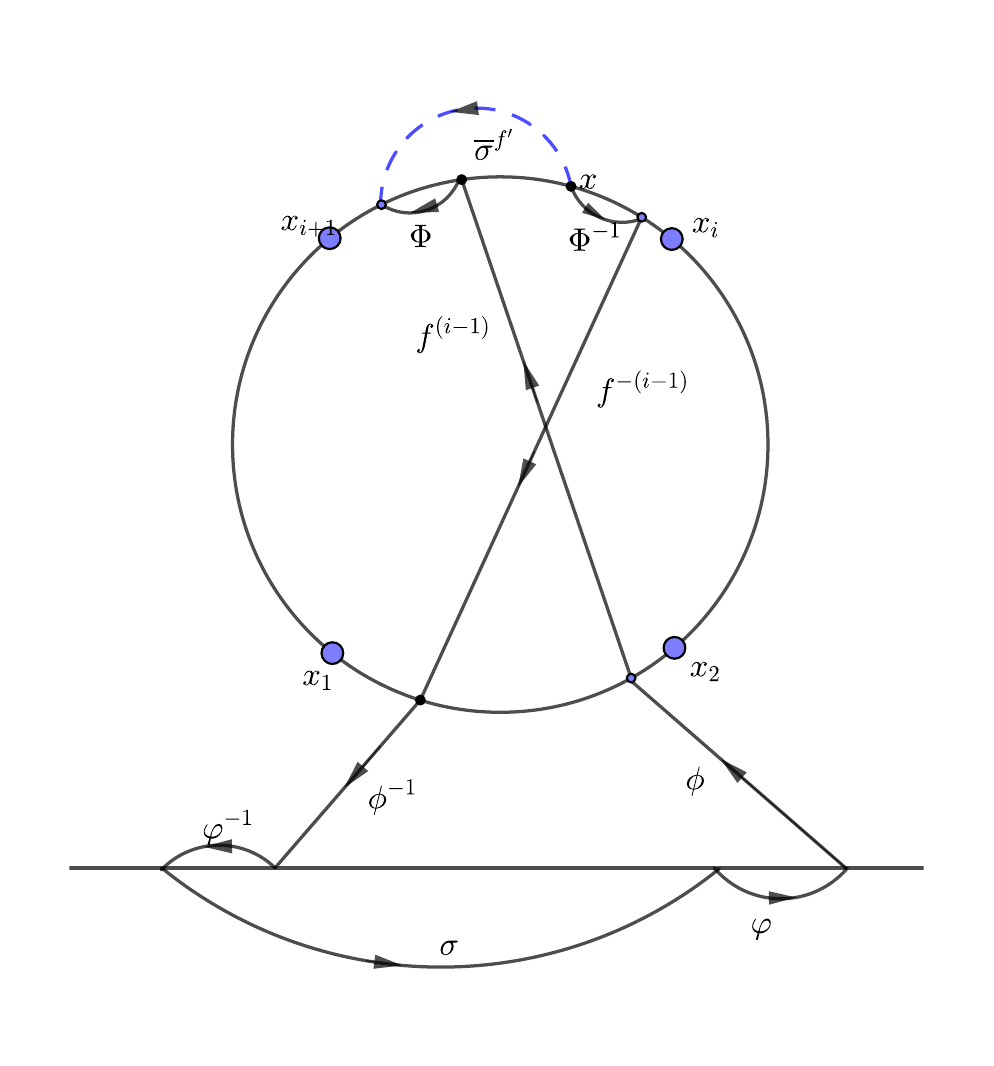}
\end{minipage}
}
\caption{Lemma 6.3}
\end{figure}

For general case, i.e. $g'=\varphi gh^k\varphi^{-1}$, we combine Claims 1 and 2 in order to obtain that $G_{\alpha,n,k,g,f}$ is conjugate to $G_{\alpha, n,k,g', f'}$. Precisely,  by Claim 1, we have $G_{\alpha,n, k,g,f}=G_{\alpha,n,k,gh^k, f\overline{h}^f}$. Then, by Claim 2, $G_{\alpha,n,k,gh^k, f\overline{h}^f}$ is conjugate to $G_{\alpha,n,k,\varphi gh^k\varphi^{-1}, (\overline{\varphi^{-1}}^{f\overline{h}^f}) f\overline{h}^f(\overline{\varphi}^{f\overline{h}^f})}$ by $\overline{\varphi^{-1}}^{f\overline{h}^f}$. By Lemma \ref{disregarding f}, $G_{\alpha,n,k,\varphi gh^k\varphi^{-1}, (\overline{\varphi^{-1}}^{f\overline{h}^f}) f\overline{h}^f(\overline{\varphi}^{f\overline{h}^f})}$ is conjugate to $G_{\alpha,n,k,g',f'}$. Hence $G_{\alpha,n,k,g,f}$ is conjugate to $G_{\alpha,n,k,g',f'}$. \\

\noindent \emph{Necessity}. Suppose that $G_{\alpha,n,k,g,f}$ is topologically conjugate to $G_{\alpha,n,k,g',f'}$. Then there exists $\psi\in\text{Homeo}_{+}(\mathbb{S}^1)$ such that
\begin{displaymath}
\psi G_{\alpha,n,k,g,f}\psi^{-1}=G_{\alpha,n,k,g',f'}.
\end{displaymath}

Note that the set $\{x_1,\cdots,x_k\}$ is $\psi$-invariant, since it represents the unique periodic orbit of both groups. Moreover, $\psi$ being orientation preserving, if $\psi(x_1)=x_1$, then $\psi(x_i)=x_i$ for all $i=1,\cdots,k$. We may assume that  $\psi(x_i)=x_i$ for any $1\leq i\leq k$ whence $\psi((x_i,x_{i+1}))=(x_i,x_{i+1})$. Otherwise, suppose that $\psi(x_1)=x_j$ for some integer $j$ with $1\leq j\leq k$. It is clear that $G_{\alpha,n,k,g,f}= f^{-(j-1)} G_{\alpha,n,k,g,f} f^{j-1}$. Thus
\begin{displaymath}
\psi f^{-(j-1)} G_{\alpha,n,k,g,f} f^{j-1}\psi^{-1}=G_{\alpha,n,k,g',f'}.
\end{displaymath}
Then $\psi f^{-(j-1)}$ satisfies the condition that $\psi f^{-(j-1)}(x_i)=x_i$ for any $1\leq i\leq k$.\\

Let $H=\{q\in G_{\alpha,n,k,g,f}: q(x_i)=x_i,i=1,\cdots,k\}=\left\{\overline{h}^f: h\in G_{\alpha,n}\right\}$. Then
\begin{displaymath}
fH=\{q\in G_{\alpha,n,k,g,f}:  q(x_i)=x_{i+1},i=1,\cdots, k\}.
\end{displaymath}
One has $\psi^{-1}f'\psi(x_i)=x_{i+1}$, for each $i=1,\cdots,k$. Thus $\psi^{-1}f'\psi\in f H$, that is
there exists an $h\in G_{\alpha,n}$ such that $\psi f \overline{h}^f\psi^{-1}=f'$. Thus
\begin{displaymath}
\psi\widetilde{gh^k}\psi^{-1}=\left(\psi (f\overline{h}^f)^k\psi^{-1}\right)\mid_{(x_1,x_2)}=f'^k\mid_{(x_1,x_2)}=\tilde{g'}.
\end{displaymath}
Let $\phi$ be the orientation preserving homeomorphism from $\mathbb{R}$  to $(x_1,x_2)$, fixed in Section 4. Thus
\begin{equation*}
\psi\phi g h^k \phi^{-1}\psi^{-1}=\phi g'\phi^{-1}.
\end{equation*}
Let $\varphi = \phi^{-1}\psi\phi$. Then $\varphi g h^k\varphi^{-1}=g'$.\\

It remains to show  $\varphi\in N_{\text{Homeo}_{+}(\mathbb{R})}(G_{\alpha,n })$.  It is obvious that $\varphi\in \text{Homeo}_{+}(\mathbb{R})$. For any $\sigma\in G_{\alpha,n}$,
\begin{displaymath}
\varphi \sigma\varphi^{-1} =\phi^{-1}\psi\tilde{\sigma}\psi^{-1}\phi.
\end{displaymath}
Note that $\psi|_{(x_1,x_2)}$ conjugates $G_{\alpha,n,k,g,f}|_{(x_1,x_2)}$ to $G_{\alpha,n,k,g',f'}|_{(x_1,x_2)}$, since they both coincide with $\widetilde{G_{\alpha,n}}$. Thus $\psi (\widetilde{G_{\alpha,n}})\psi^{-1}=\widetilde{G_{\alpha,n}}$, i.e., $\psi\phi G_{\alpha,n}\phi^{-1}\psi^{-1}=\phi G_{\alpha,n}\phi^{-1}$. Hence $\phi^{-1}\psi\phi G_{\alpha,n}\phi^{-1}\psi^{-1}\phi=G_{\alpha,n}$. Therefore, $\varphi=\phi^{-1}\psi\phi\in N_{\text{Homeo}_{+}(\mathbb{R})}(G_{\alpha,n })$.
\end{proof}

Define
$${\rm Conj}(G_{\alpha, n}, G_{\alpha', n})=\{\psi\in{\rm Homeo}_{+}(\mathbb R):G_{\alpha, n}=\psi G_{\alpha', n}\psi^{-1}\}.$$
If $\alpha$ and $\alpha'$ are equivalent, then ${\rm Conj}(G_{\alpha, n}, G_{\alpha', n})\not=\emptyset$ by Theorem \ref{classification of R};
and we fix a conjugation $\psi_{\alpha,\alpha'}\in{\rm Conj}(G_{\alpha, n}, G_{\alpha', n})$.\\

Analogous arguments to the Claim 2 in the proof of Lemma \ref{conj with g}, allow to construct a conjugation $\Psi$ of $G_{\alpha,n,k,g,f}$ to $G_{\alpha', n, k,g',f'}$, with $g'=\psi_{\alpha,\alpha'}g\psi_{\alpha,\alpha'}^{-1}$ and $f'=\Psi f\Psi^{-1}$. Finally, by the above lemmas, we obtain

\begin{theorem}\label{classification of S}
  $G_{\alpha,n,k,g,f}$ is topologically conjugate to $G_{\alpha',n',k',g',f'}$ if and only if
 \begin{itemize}
   \item $n=n'$ and $k=k'$;
   \item $\alpha$ is equivalent to $\alpha'$, i.e. there exist $m_1,n_1,m_2,n_2\in \mathbb{Z}$ with $\left|m_1n_2-n_1m_2\right|=1$ such that $\alpha'=\frac{m_1+n_1\alpha}{m_2+n_2\alpha}$;
   \item $g$ and $\psi_{\alpha,\alpha'} g'\psi_{\alpha,\alpha'}^{-1}$ are in the same orbit of the affine action on $G_{\alpha,n}$ by $N_{\text{Homeo}_{+}(\mathbb{R})}(G_{\alpha,n })\ltimes G_{\alpha,n}^k$, i.e. there exist some $\varphi\in N_{\text{Homeo}_{+}(\mathbb{R})}(G_{\alpha,n })$ and $h\in G_{\alpha,n}$ such that $\psi_{\alpha,\alpha'} g'\psi_{\alpha,\alpha'}^{-1}=\varphi gh^k\varphi^{-1}$.
 \end{itemize}
\end{theorem}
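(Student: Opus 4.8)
The plan is to obtain the theorem by assembling the three preceding results, the whole difficulty being concentrated in reducing the general situation to the case $\alpha=\alpha'$ already settled by the preceding lemma. The organizing idea is that once $\alpha$ is known to be equivalent to $\alpha'$, the fixed conjugation $\psi_{\alpha,\alpha'}\in\mathrm{Conj}(G_{\alpha,n},G_{\alpha',n})$ furnished by Theorem \ref{classification of R} lets us transport $G_{\alpha',n,k,g',f'}$ to a group with first index $\alpha$; after this reduction everything is governed by Lemma \ref{disregarding f} (which frees the index $f$) and the preceding lemma (which expresses conjugacy through the affine orbit of the index $g$).

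For \emph{necessity}, suppose $G_{\alpha,n,k,g,f}$ is topologically conjugate to $G_{\alpha',n',k',g',f'}$. Lemma \ref{conj with g} immediately yields $n=n'$, $k=k'$ and the equivalence of $\alpha$ and $\alpha'$, which is the content of the first two bullets and at the same time guarantees that $\psi_{\alpha,\alpha'}$ is defined. I would then run the construction described just before the theorem: arguing exactly as in Claim 2 of the previous lemma but with the parameter-changing conjugation $\psi_{\alpha,\alpha'}$ in place of a normalizer element, I produce a conjugation $\Psi$ carrying $G_{\alpha',n,k,g',f'}$ onto $G_{\alpha,n,k,\,\psi_{\alpha,\alpha'}g'\psi_{\alpha,\alpha'}^{-1},\,f''}$ for a suitable $f''=\Psi f'\Psi^{-1}$. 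Composing this with the hypothesized conjugacy shows that $G_{\alpha,n,k,g,f}$ is conjugate to $G_{\alpha,n,k,\,\psi_{\alpha,\alpha'}g'\psi_{\alpha,\alpha'}^{-1},\,f''}$, a comparison now within a single value of the parameter; the necessity half of the preceding lemma then forces $g$ and $\psi_{\alpha,\alpha'}g'\psi_{\alpha,\alpha'}^{-1}$ to lie in the same orbit of the affine $N_{\text{Homeo}_{+}(\mathbb{R})}(G_{\alpha,n})\ltimes G_{\alpha,n}^k$ action, which is the third bullet.

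For \emph{sufficiency}, assume the three bullets. The same Claim-2-type conjugation shows $G_{\alpha',n,k,g',f'}$ is conjugate to $G_{\alpha,n,k,\,\psi_{\alpha,\alpha'}g'\psi_{\alpha,\alpha'}^{-1},\,f''}$; by the third bullet $g$ and $\psi_{\alpha,\alpha'}g'\psi_{\alpha,\alpha'}^{-1}$ share an affine orbit, so the sufficiency half of the preceding lemma gives a conjugacy between $G_{\alpha,n,k,g,f}$ and $G_{\alpha,n,k,\,\psi_{\alpha,\alpha'}g'\psi_{\alpha,\alpha'}^{-1},\,\tilde f}$ for some index $\tilde f$; finally Lemma \ref{disregarding f} absorbs the discrepancy between the two $f$-indices $\tilde f$ and $f''$, and chaining the three conjugacies yields that $G_{\alpha,n,k,g,f}$ is topologically conjugate to $G_{\alpha',n,k,g',f'}$.

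The main obstacle I expect is making the ``analogous to Claim 2'' step genuinely rigorous when the parameter actually changes. In Claim 2 the conjugating map $\Phi=\overline{\varphi^{-1}}^{f}$ is built from an element $\varphi$ of the normalizer of a single group $G_{\alpha,n}$, so the extension formula \eqref{extension} applies verbatim; here I instead need the map $\Psi$ built from $\psi_{\alpha,\alpha'}$, which only conjugates $G_{\alpha',n}$ to $G_{\alpha,n}$, and I must check that the circle homeomorphism assembled from $\psi_{\alpha,\alpha'}$ by the slot-by-slot formula \eqref{extension} really intertwines the two extended groups, correctly tracks the image of the rotation-like generator $f'$, and produces precisely the new index $\psi_{\alpha,\alpha'}g'\psi_{\alpha,\alpha'}^{-1}$ on the fundamental interval. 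A secondary point worth recording is that the third bullet does not depend on which representative $\psi_{\alpha,\alpha'}$ was fixed: any two choices differ by an element of $N_{\text{Homeo}_{+}(\mathbb{R})}(G_{\alpha,n})$, which merely moves $\psi_{\alpha,\alpha'}g'\psi_{\alpha,\alpha'}^{-1}$ within its affine orbit, so the criterion is well posed.
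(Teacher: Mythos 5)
Your proposal is correct and follows essentially the same route as the paper: the paper likewise obtains the theorem by combining Lemma \ref{conj with g} (for $n=n'$, $k=k'$, and the equivalence of $\alpha$ and $\alpha'$), a Claim-2-style conjugation built from the fixed $\psi_{\alpha,\alpha'}$ to reduce to a single parameter $\alpha$, and then Lemma \ref{disregarding f} together with the affine-orbit criterion of Lemma \ref{conj with g}'s successor lemma. Your two additional observations --- that the Claim-2 analogue must be checked when the conjugating map changes the parameter, and that the criterion is independent of the chosen representative $\psi_{\alpha,\alpha'}$ --- are both sound and, if anything, make the argument more careful than the paper's brief assembly.
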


\section{Matrix Representation of $N_{\text{Homeo}_{+}(\mathbb{R})}(G_{\alpha,n })$}

In this section, we want to restate Theorem \ref{classification of S} in terms of matrix, with respect to the standard basis $\{e_1,...,e_n\}$ of $G_{\alpha, n}$ defined as in Section 3. This will make us easier to determine whether two systems $G_{\alpha,n,k,g,f}$, $G_{\alpha',n',k',g',f'}$  are conjugate.\\

From Theorem \ref{classification of R}-(1), we see that if $\alpha$ and $\beta$ are equivalent irrationals in $(0, 1)$, then
$G_{\alpha,n}$ and $G_{\beta, n}$ are conjugate. Now suppose that $\alpha=\frac{m_1+n_1\beta}{m_2+n_2\beta}$ for some integers $m_1, n_1, m_2, n_2$ with $|m_1n_2-n_1m_2|=1$. We will define a sequence of conjugations $\phi_n$ between $G_{\alpha,n}$ and $G_{\beta, n}$ for
every $n\geq 2$. When $n=2$, the conjugation $\phi_2$ between $G_{\alpha,2}$ and $G_{\beta, 2}$ can be
taken as a multiplication $M_u:\mathbb R\rightarrow \mathbb R, x\mapsto ux$, where $u=|m_2+n_2\beta|$ (see \cite[Lemma 3.3]{SZ2}). More precisely, we may assume that $m_2+n_2\beta>0$, otherwise we can replay $m_1,m_2,n_1,n_2$ by $-m_1,-m_2,-n_1,-n_2$ respectively. Then
\begin{equation}\label{2-conj}
M_u L_1M_{u}^{-1}=L_u=L_1^{m_2} L_{\beta}^{n_2}, \text{and }
M_u L_{\alpha}M_{u}^{-1}=L_{\alpha u}=L_1^{m_1} L_{\beta}^{n_1}.
\end{equation}
Since $|m_1n_2-n_1m_2|=1$ , we have
\begin{displaymath}
\mathbb{Z}^{2}\cong G_{\beta,2}=\langle L_1,L_{\beta}\rangle= \langle L_u,L_{\alpha u}\rangle.
\end{displaymath}
Therefore, $M_{u}G_{\alpha,2}M_{u}^{-1}=G_{\beta,2}$. Fix standard basis $\{L_1,L_{\alpha}\}$ , $\{L_1,L_{\beta}\}$ of $G_{\alpha,2}$ and $G_{\beta,2}$ respectively. Then, by \ref{2-conj}, the conjugation by $\phi_2=M_u$ can be represented by matrix
\[
\left(
\begin{array}{cc}
m_2& m_1\\
n_2& n_1
\end{array}
\right).
\]
More precisely, under the standard basis $\{L_1,L_{\alpha}\}$ of $\mathbb{Z}$-module $G_{\alpha,2}$, an element $g=L_1^{x_{\alpha}}L_{\alpha}^{y_{\alpha}}\in G_{\alpha,2}$ is represented by $\left(
\begin{array}{cc}
x_\alpha\\
y_{\alpha}
\end{array}
\right).$
Then the coordinate of $\phi_{2} g\phi^{-1}=L_1^{x_{\beta}}L_{\beta}^{y_{\beta}}$ under the basis $\{L_1,L_{\beta}\}$ is
\[
\left(
\begin{array}{cc}
x_\beta\\
y_\beta
\end{array}
\right)
=
\left(
\begin{array}{cc}
m_2& m_1\\
n_2& n_1
\end{array}
\right)
\left(
\begin{array}{cc}
x_\alpha\\
y_\alpha
\end{array}
\right).
\]
Now for $n\geq 3$, $\phi_n:=\hbar^{(n-2)}(\phi_2)$ is a conjugation between $G_{\alpha,n}$ and $G_{\beta,n}$ by \cite[Theorem 3.4]{SZ2}.  More precisely,
\begin{eqnarray*}
\phi_n \hbar^{(n-2)}(L_{1})\phi_n^{-1}&=&\hbar^{(n-2)}(\phi_2) \hbar^{(n-2)}(L_{1})\hbar^{(n-2)}(\phi_2^{-1})\\
&=& \hbar^{(n-2)}(\phi_2L_{1}\phi_2^{-1})~~(\text{by Lemma \ref{h morphism} })\\
&=& \hbar^{(n-2)}(L_1^{m_2} L_{\beta}^{n_2})~~(\text{by \ref{2-conj}})\\
&=& \Big(\hbar^{(n-2)}(L_1)\Big)^{m_2}\Big(\hbar^{(n-2)}(L_\beta)\Big)^{n_2},
\end{eqnarray*}
and
\begin{eqnarray*}
\phi_n \hbar^{(n-2)}(L_{\alpha})\phi_n^{-1}&=&\hbar^{(n-2)}(\phi_2) \hbar^{(n-2)}(L_{\alpha})\hbar^{(n-2)}(\phi_2^{-1})\\
&=& \hbar^{(n-2)}(\phi_2L_{\alpha}\phi_2^{-1})\\
&=& \hbar^{(n-2)}(L_1^{m_1} L_{\beta}^{n_1})\\
&=& \Big(\hbar^{(n-2)}(L_1)\Big)^{m_1}\Big(\hbar^{(n-2)}(L_\beta)\Big)^{n_1}.
\end{eqnarray*}
For $j=0,1,\cdots, n-3$,
\begin{eqnarray*}
\phi_n \hbar^{(j)}(L_{1})\phi_n^{-1}&=&\hbar^{(n-2)}(\phi_2) \hbar^{(j)}(L_{1})\hbar^{(n-2)}(\phi_2^{-1})\\
&=& \hbar^{(n-2-j)}\hbar^{(j)}(\phi_2L_{1}\phi_2^{-1})\hbar^{(n-2-j)}(\phi_2^{-1})\\
&=&  \hbar^{(j)} \Big(\hbar^{(n-2-j)}(\phi_2)L_1\hbar^{(n-2-j)}(\phi_2^{-1})\Big)\\
&=&  \hbar^{(j)}(L_1). ~~(\text{by Lemma \ref{h morphism} })
\end{eqnarray*}

Therefore, under the standard bases  $\big\{\hbar^{(n-2)}(L_1),\hbar^{(n-2)}(L_{\alpha}),\hbar^{(n-3)}(L_{1})$, $\cdots,\hbar^{(1)}(L_{1}), L_1\big\}$ and $\left\{\hbar^{(n-2)}(L_1),\hbar^{(n-2)}(L_{\beta}),\hbar^{(n-3)}(L_{1}), \cdots,\hbar^{(1)}(L_{1}), L_1\right\}$ of $G_{\alpha,n}$ and $G_{\beta,n}$ respectively, $\phi_n$ can be represented by matrix
\[
{\widetilde A_{\alpha,\beta}}=\left(
\begin{array}{cc}
A_{\alpha,\beta} & O\\
O & I
\end{array}
\right),
\]
where $A_{\alpha,\beta}=\left(
\begin{array}{cc}
m_2& m_1\\
n_2& n_1
\end{array}
\right) \in GL(2, \mathbb Z)$ and $I$ is the identity matrix of rank $n-2$. We call $\phi_n$ so defined the {\it standard conjugation} between $G_{\alpha,n}$ and $G_{\beta, n}$.

Now, we want to determine the matrix representation of the group $N_{\text{Homeo}_{+}(\mathbb{R})}(G_{\alpha,n})$ with respect to the standard
basis $\{e_1,...,e_n\}$.

If $\varphi\in\text{Homeo}_{+}(\mathbb{R})$ such that $\varphi G_{\alpha,2} \varphi^{-1}=G_{\alpha,2}$, then there are
integers $m_1,n_1,$ $m_2,n_2\in \mathbb{Z}$ with $|m_1n_2-n_1m_2|=1$ such that $L_1^{m_1}L_\alpha^{n_1}=\varphi L_\alpha\varphi^{-1}$,
$L_1^{m_2}L_\alpha^{n_2}=\varphi L_1\varphi^{-1}$, and $\alpha=\frac{m_1+n_1\alpha}{m_2+n_2\alpha}$ (See \cite{SZ2} Lemma 3.3). So, the matrix representation
of $\varphi$ belongs to the following group
 \[
F_\alpha:=\left\{\left(
\begin{array}{cc}
m_2& m_1\\
n_2& n_1
\end{array}
\right)\in GL(2,\mathbb Z):\alpha=\frac{m_1+n_1\alpha}{m_2+n_2\alpha}\right\}.
\]
 Thus by the definition of
$G_{\alpha,n}$, we see that each element in $N_{\text{Homeo}_{+}(\mathbb{R})}(G_{\alpha,n})$ has the matrix representation:
\[
{\widetilde B_\alpha}=\left(
\begin{array}{cc}
f_{\alpha} & *\\
O & B
\end{array}
\right)\in GL(n,\mathbb Z),
\]
where $B$ is an $(n-2)\times (n-2)$ upper triangular matrix  with diagonals $1$ and $f_\alpha\in F_\alpha$.
Here we remark that if $\alpha$ is not an algebraic number of degree $2$ over $\mathbb Q$, then $F_\alpha$ is trivial.
Conversely, given a matrix of the form $\widetilde B_\alpha$ as above, then there is a $\varphi\in N_{\text{Homeo}_{+}(\mathbb{R})}(G_{\alpha,n})$
whose matrix representation is $\widetilde B_\alpha$ by the construction process as in \cite{SZ2}.

Moreover, it is clear that
\begin{displaymath}
{\rm Conj}(G_{\alpha, n}, G_{\alpha', n})=\{\varphi\circ\psi_{\alpha,\alpha'}:\varphi\in N_{\text{Homeo}_{+}(\mathbb{R})}(G_{\alpha,n })\},
\end{displaymath}
where $\psi_{\alpha,\alpha'}\in {\rm Conj} (G_{\alpha, n}, G_{\alpha', n})$. So the matrix representation of each element in ${\rm Conj}(G_{\alpha, n}, G_{\alpha', n})$ has the form:
$\widetilde{B_\alpha}\widetilde{A_{\alpha,\alpha'}}$.

Altogether, we get a restatement of Theorem \ref{classification of S}.

\begin{theorem}
  $G_{\alpha,n,k,g,f}$ is topologically conjugate to $G_{\alpha',n',k',g',f'}$ if and only if
 \begin{itemize}
   \item $n=n'$ and $k=k'$;
   \item there exists
   \[
A_{\alpha,\alpha'}=\left(
\begin{array}{cc}
m_2& m_1\\
n_2& n_1
\end{array}
\right)\in GL(2,\mathbb Z),
\]
an upper triangular matrix $B\in GL(n-2, \mathbb Z)$ with diagonals $1$, and $\overrightarrow{w}\in k\mathbb Z^n$ such that $$\alpha'=\frac{m_1+n_1\alpha}{m_2+n_2\alpha},$$
and
   $$\overrightarrow{v}=\widetilde{B_\alpha}\widetilde{A_{\alpha,\alpha'}}\overrightarrow{u}+\overrightarrow{w},$$
where    $\overrightarrow{u}, \overrightarrow{v}$ are vectors in $\mathbb Z^n$ corresponding to $g$ and $g'$ respectively.

 \end{itemize}
\end{theorem}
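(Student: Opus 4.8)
The plan is to obtain this statement as a direct translation of Theorem \ref{classification of S} into the matrix formalism developed above, so I would treat its three conditions one at a time. The equalities $n=n'$ and $k=k'$ are verbatim the first condition of Theorem \ref{classification of S} and require nothing further. For the second bullet, recall from Section 3 that $\alpha$ is equivalent to $\alpha'$ exactly when $\alpha'=\frac{m_1+n_1\alpha}{m_2+n_2\alpha}$ for integers with $|m_1n_2-n_1m_2|=1$; since this last condition is precisely the requirement that $A_{\alpha,\alpha'}=\left(\begin{smallmatrix}m_2 & m_1\\ n_2 & n_1\end{smallmatrix}\right)$ lie in $GL(2,\mathbb Z)$, this bullet merely re-expresses the second condition of Theorem \ref{classification of S}.

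The substance of the proof is the translation of the third (affine-orbit) condition, and for this I would first assemble the coordinate data recorded earlier in the section. Relative to the standard bases, $\psi_{\alpha,\alpha'}$ conjugates $G_{\alpha',n}$ onto $G_{\alpha,n}$ and is represented by $\widetilde{A_{\alpha,\alpha'}}$; each $\varphi\in N_{\text{Homeo}_{+}(\mathbb R)}(G_{\alpha,n})$ is represented by a matrix $\widetilde{B_\alpha}\in GL(n,\mathbb Z)$ of the stated block form, and conversely every such block matrix is realized by some $\varphi$; finally, ${\rm Conj}(G_{\alpha,n},G_{\alpha',n})$ consists exactly of the composites $\varphi\circ\psi_{\alpha,\alpha'}$, which are represented precisely by the products $\widetilde{B_\alpha}\widetilde{A_{\alpha,\alpha'}}$.

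Writing $\overrightarrow u$ and $\overrightarrow v$ for the coordinate vectors of $g\in G_{\alpha,n}$ and $g'\in G_{\alpha',n}$, I would now unwind the group identity $\psi_{\alpha,\alpha'}g'\psi_{\alpha,\alpha'}^{-1}=\varphi g h^k\varphi^{-1}$ furnished by Theorem \ref{classification of S}. Repackaging $\varphi$ and $\psi_{\alpha,\alpha'}$ into a single element of ${\rm Conj}(G_{\alpha,n},G_{\alpha',n})$ carrying $g'$ into $G_{\alpha,n}$, and using that the coordinate vector of $h^k$ is $k$ times that of $h$ and so lies in $k\mathbb Z^n$, this identity passes --- through the coordinate isomorphism $G_{\alpha,n}\cong\mathbb Z^n$ --- to a relation between $\overrightarrow u$ and $\overrightarrow v$ mediated by a matrix of the form $\widetilde{B_\alpha}\widetilde{A_{\alpha,\alpha'}}$ and a translation in $k\mathbb Z^n$. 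Because $GL(n,\mathbb Z)$ stabilizes the sublattice $k\mathbb Z^n$ and the block matrices $\widetilde{B_\alpha}$ are closed under multiplication and inversion, I can solve for $\overrightarrow v$ while keeping the matrix factor of the same product type and the translation inside $k\mathbb Z^n$, arriving at
\[
\overrightarrow v=\widetilde{B_\alpha}\widetilde{A_{\alpha,\alpha'}}\,\overrightarrow u+\overrightarrow w,\qquad \overrightarrow w\in k\mathbb Z^n.
\]
Reading this backwards gives the converse: the block form of $\widetilde{B_\alpha}$ produces an honest $\varphi\in N_{\text{Homeo}_{+}(\mathbb R)}(G_{\alpha,n})$ by the realizability quoted above, the vector $\overrightarrow w$ produces an $h\in G_{\alpha,n}$ whose $k$-th power has coordinate vector $\overrightarrow w$, and lifting the coordinate identity back through the representation recovers the third condition of Theorem \ref{classification of S}.

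The one genuinely delicate point is the bookkeeping of directions and of multiplication order: I must verify that the composite $\varphi\circ\psi_{\alpha,\alpha'}$ corresponds to the product $\widetilde{B_\alpha}\widetilde{A_{\alpha,\alpha'}}$ in that order (rather than to its reverse or inverse), and that after solving for $\overrightarrow v$ the surviving matrix is still of the form $\widetilde{B_\alpha}\widetilde{A_{\alpha,\alpha'}}$ and the surviving translation still lies in $k\mathbb Z^n$. Once the two group-theoretic facts invoked above are in hand --- that the matrices $\widetilde{B_\alpha}$ form a group and that $GL(n,\mathbb Z)$ preserves $k\mathbb Z^n$ --- the remaining verifications are routine, and combining the three translations with Theorem \ref{classification of S} yields the asserted equivalence.
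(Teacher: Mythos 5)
Your proposal is correct and takes essentially the same route as the paper, which likewise obtains this theorem as a direct coordinate translation of Theorem \ref{classification of S}, using the matrix representations of $N_{\text{Homeo}_{+}(\mathbb{R})}(G_{\alpha,n})$, of the standard conjugation $\psi_{\alpha,\alpha'}$, and of ${\rm Conj}(G_{\alpha,n},G_{\alpha',n})$ assembled in Section 7, offering no further argument beyond ``Altogether, we get a restatement.'' The direction and multiplication-order bookkeeping you flag (together with the facts that the matrices $\widetilde{B_\alpha}$ form a group, that $GL(n,\mathbb Z)$ preserves $k\mathbb Z^n$, and that $h^k$ has coordinate vector $k$ times that of $h$) is indeed the only point of substance, and the paper leaves it just as implicit as you do.
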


\subsection*{Acknowledgements}
 We are grateful to the referee for his comments, corrections and helpful suggestions.


\begin{thebibliography}{999}

\bibitem{Bu} M. Burger, N. Monod. Bounded cohomology of lattices in higher rank Lie groups. J. Eur. Math. Soc. 1 (1999), 199-235.

\bibitem{CJN} G. Castro, E. Jorquera, and A. Navas. Sharp regularity of certain nilpotent group actions on the interval. Math. Annalen 359 (2014), 101-152.

\bibitem{DKA} B. Deroin, V. Kleptsyn, and A. Navas. Sur la dynamique unidimensionnelle en r\'egularit\'e interm\'ediaire. Acta Math. 199 (2007), 199-262.

\bibitem{FF} B. Farb and J. Franks. Groups of homeomorphisms of one-manifolds III: nilpotent subgroups. Ergodic Theorey and Dynamical Systems 23 (2003), 1467-1484.

\bibitem{Gh87}\'E. Ghys. Groupes d'hom\'eomorphismes du cercle et cohomologie born\'ee. The Lefschetz centennial conference, Part III (Mexico City, 1984), Contemp. Math. 58 III, Amer. Math. Soc., Providence, RI, (1987), 81-106.

\bibitem{Gh99}\'E. Ghys. Actions de r\'eseaux sur le cercle. Invent. Math. 137 (1999), 199-231.

\bibitem{Gh01}\'E. Ghys. Groups acting on the circle.
L'Enseign. Math. 47 (2001), 329-407.

\bibitem{JNR} E. Jorquera, A. Navas, and C. Rivas. On the sharp regularity for arbitrary actions of nilpotent groups on the interval: the case of $N_4$.
arXiv: 1503.01033v2.

\bibitem{K}
A. Katok and B. Hasselblatt. Introduction to the Modern Theory of Dynamical Systems. Cambridge University Press, 1995.

\bibitem{Na10} A. Navas. On the dynamics of (left) orderable groups. Ann. Inst. Fourier (Grenoble) 60 (2010),  1685-1740.

\bibitem{Na11} A. Navas.  Groups of circle diffeomorphisms. Translation of the 2007 Spanish edition, Chicago Lectures in Mathematics, University of Chicago Press, Chicago, IL, 2011.


\bibitem{Po} H. Poincar\'e. M\'emoire sur les courbes d\'efinies par une\'equation diff\'erentielle. Journal de Math\'ematiques 7 (1881) 375-422 et 8 (1882) 251-296.

\bibitem{SZ1} E. Shi and L. Zhou. Topological transitivity and wandering intervals for group actions on the line
$\mathbb R$. Groups Geom. Dyn. 13 (2019), 293-307.


\bibitem{SZ2} E. Shi and L. Zhou. Topological conjugation classes of tightly transitive subgroups of ${\rm Homeo}_+(\mathbb R)$. Colloq. Math. 145 (2016),  111-120.

\bibitem{Wi}D. Witte-Morris. Arithmetic groups of higher Q-rank cannot act on 1-manifolds. Proc. Amer. Math. Soc. 122 (1994),  333-340.

\end{thebibliography}
\end{document}